\newtheorem{theorem}{Theorem}
\newtheorem{corollary}[theorem]{Corollary}
\newtheorem{lemma}[theorem]{Lemma}
\theoremstyle{definition}
\newtheorem{definition}[theorem]{Definition}
\theoremstyle{remark}
\newtheorem{remark}{Remark}
\newtheorem{example}{Example}
\newcommand{\NN}{\mathbb{N}} 
\newcommand{\ZZ}{\mathbb{Z}} 
\newcommand{\RR}{\mathbb{R}} 
\newcommand{\CC}{\mathbb{C}} 
\newcommand{\Td}{\mathbb{T}^{d}} 
\newcommand{\Rd}{\mathbb{R}^{d}} 
\newcommand{\id}{\mathrm{id}} 
\newcommand{\Diff}{\mathrm{Diff}}%
\newcommand{\DiffRd}{\mathrm{Diff}_{H^{\infty}}(\mathbb{R}^{d})} 
\newcommand{\DiffTd}{\mathrm{Diff}^{\infty}(\mathbb{T}^{d})} 
\newcommand{\DRd}[1]{\mathcal{D}^{#1}(\mathbb{R}^{d})} 
\newcommand{\DTd}[1]{\mathcal{D}^{#1}(\mathbb{T}^{d})} 
\newcommand{\Vect}{\mathrm{Vect}} 
\newcommand{\HRd}[1]{H^{#1}(\mathbb{R}^{d},\mathbb{R}^{d})} 
\newcommand{\HTd}[1]{H^{#1}(\mathbb{T}^{d},\mathbb{R}^{d})} 
\newcommand{\HR}[1]{H^{#1}(\mathbb{R}^{d},\mathbb{R})} 
\newcommand{\HT}[1]{H^{#1}(\mathbb{T}^{d},\mathbb{R})} 
\newcommand{\CS}{\mathrm{C}^{\infty}} 
\newcommand{\norm}[1]{\left\Vert#1\right\Vert}
\newcommand{\abs}[1]{\left\vert#1\right\vert}
\newcommand{\set}[1]{\left\{#1\right\}}
\newcommand{\op}[1]{\mathrm{op}\left(#1\right)}
\DeclareMathOperator{\ad}{ad} %
\DeclareMathOperator{\Ad}{Ad} %
\DeclareMathOperator{\tr}{tr} %
\DeclareMathOperator{\Rec}{Rec} %
\DeclareMathOperator{\dive}{div} %
\begin{document}

\title[Local well-posedness of the EPDiff equation]{Local well-posedness of the EPDiff equation: a survey}

\author[B. Kolev]{Boris Kolev}
\address{Aix Marseille Universit\'{e}, CNRS, Centrale Marseille, I2M, Marseille, France}
\email{boris.kolev@math.cnrs.fr}

\subjclass[2010]{58D05, 35Q35}
\keywords{EPDiff equation; diffeomorphism groups; Sobolev metrics of fractional order} %

\date{\today}

\begin{abstract}
  This article is a survey on the local well-posedness problem for the general EPDiff equation. The main contribution concerns recent results on local existence of the geodesics on $\DiffTd$ and $\DiffRd$ when the inertia operator is a non-local Fourier multiplier.
\end{abstract}

\maketitle


\section{Introduction}
\label{sec:intro}

In a seminal paper~\cite{Arn1966}, published in 1966, Arnold recast the equations of motion of a perfect fluid (with fixed boundary) as the geodesic flow on the volume-preserving diffeomorphisms group of the domain (see also the short note of Moreau~\cite{Mor1959} going back to the late 1950s). For the little story, Arnold's paper was written in French for the bi-century of the 1765's paper of Euler~\cite{Eul1765a} (also written in French) who recast the equations of motion of a free rigid body as the geodesic flow on the rotation group.

This elegant geometrical re-formulation of ideal hydrodynamics applies, more generally, to any mechanical system when the configuration space has the structure of a Lie group $G$ and the Lagrangian is invariant by the lifted (right or left) action of $G$ on $TG$ (this idea goes back to Poincar\'{e}~\cite{Poi1901}). Consider, for instance, a Riemannian metric on $G$ which is right-invariant. Such a metric is thus specified by an inner product on $\mathfrak{g} = T_{e}G$, the Lie algebra of $G$. For historical reasons going back to the pioneering work of Euler~\cite{Eul1765a} on the motion of a rigid body, this inner product is usually represented by a linear isomorphism $A : \mathfrak{g} \to \mathfrak{g}^{*}$ called the \emph{inertia operator} and defined by
\begin{equation*}
  (Au)(v) := <u,v>, \qquad u,v \in \mathfrak{g}.
\end{equation*}
Let now $g(t)$ be a geodesic for this right-invariant metric on $G$ and consider the ``Eulerian velocity'', $u(t) := TR_{g^{-1}}.g_{t}$, where $g_{t}$ means time derivative, $R_{g}$ is the right translation on $G$ and $TR_{g}$ its tangent map. Then, the curve $u(t) \in \mathfrak{g}$ is a solution of the \emph{Euler--Arnold equation}
\begin{equation}\label{eq:Euler-Arnold}
  u_{t} = - \ad^{\top}_{u}u,
\end{equation}
first introduced\footnote{In Arnold's original paper, the derivation is done for a left-invariant metric and differs by a sign for a right-invariant metric (see also~\cite{AK1998,CK2012}).} in~\cite{Arn1966}, where $\ad_{u}v = [u,v]$ is the Lie bracket and $^{\top}$ means the adjoint relative to the inner product on $\mathfrak{g}$. This equation admits an integral form, also known as the \emph{conservation of the momentum} and which reads
\begin{equation}\label{eq:momentum-conservation}
  \Ad_{g(t)}^{\top}u(t) = u_{0},
\end{equation}
for any geodesic $g(t)$, where $\Ad_{g}^{\top}$ is the transpose of the adjoint action $\Ad_{g} := TL_{g}\circ TR_{g^{-1}}$ on $\mathfrak{g}$.

This elegant geometrical framework led, afterwards, to recast many partial differential equations relevant for mathematical physics as geodesic flows on various diffeomorphism groups: \emph{Burgers' equation} as the geodesic equation on $\Diff(S^1)$ with the $L^2$-metric, the \emph{KdV equation} on the Bott--Virasoro group with the $L^2$-metric in~\cite{KO1987,KM2003,CKKT2007}, the \emph{Camassa--Holm equation}~\cite{CH1993} on $\Diff(S^1)$ with the $H^1$-metric in~\cite{Kou1999, Mis1998,Mis2002}, the \emph{modified Camassa--Holm equation} on $\Diff(S^1)$ with the $H^k$-metric in~\cite{CK2003,MZ2009,HD2010}, the \emph{Hunter-Saxton equation} with the homogeneous $\dot H^{1}$-metric on $\Diff_{1}(S^1)$, the group of diffeomorphism of the circle which fix one point~\cite{Len2007,Len2008}, the \emph{modified Constantin--Lax--Majda equation}~\cite{CLM1985} as the geodesic equation on $\Diff_{1}(S^1)$ with respect to the homogeneous $\dot H^{1/2}$-metric~\cite{Wun2010,EKW2012,BKP2016}, the \emph{Euler--Weil--Peterson equation} as the geodesic equation on $\Diff_{3}(S^1)$, the group of diffeomorphism of the circle which fix three points, with respect to the homogeneous $\dot H^{3/2}$-metric~\cite{Gay2009}, the \emph{Degasperis--Procesi equation}~\cite{DP1999,DHH2002} as a non-metric Euler--Arnold equation~\cite{LMT2010,EK2011} on $\Diff(S^1)$.

From a geometrical view-point, this theory can be reduced to the study of a \emph{right-invariant Riemannian metric on the diffeomorphism group of a manifold $M$} (or one of its subgroup, or some extension of this group, or some right-invariant symmetric linear connection on this group).

To define a \emph{right invariant} Riemannian metric on the diffeomorphism group $\Diff(M)$ of a compact Riemannian manifold $M$, it suffices to prescribe an inner product on its Lie algebra $\Vect(M)$. We will moreover assume that this inner product can be written as
\begin{equation*}
  \langle u_{1},u_{2}\rangle := \int_{M} \left( Au_{1}\cdot u_{2} \right) d\mu \,,
\end{equation*}
where $u_{1}, u_{2} \in \Vect(M)$, $d\mu$ denotes the Riemannian density on $M$ and the \emph{inertia operator}
\begin{equation*}
  A : \Vect(M) \to \Vect(M)
\end{equation*}
is a $L^{2}$-symmetric, positive definite, continuous linear operator. By translating this inner product, we get an inner product on each tangent space $T_{\varphi}\Diff(M)$, which is given by
\begin{equation}\label{eq:definition-metric}
  G_{\varphi}(v_{1},v_{2}) = \int_{M}  \left( A_\varphi v_{1} \cdot v_{2} \right) \varphi^{*} d\mu \,,
\end{equation}
where $v_{1},v_{2}\in T_{\varphi}\Diff(M)$. Here $R_{\varphi} v := v \circ \varphi$ and
\begin{equation*}
  A_{\varphi} := R_{\varphi}\circ A\circ R_{\varphi^{-1}},
\end{equation*}
will be called the \emph{twisted map} (\textit{i.e.} the inertia operator $A$ twisted by the right translation $R_{\varphi}$).

A \emph{geodesic} for the metric $G$ is an extremal curve $\varphi(t)$ of the \emph{energy functional}
\begin{equation*}
  E(\varphi) := \frac{1}{2} \int_{0}^{1} G_{\varphi}(\varphi_{t},\varphi_{t}) \, dt,
\end{equation*}
where subscript $t$ in $\varphi_{t}$ means time derivative.

Let $u(t) := R_{\varphi^{-1}(t)}\varphi_{t}(t)$ be the \emph{Eulerian velocity} of the geodesic $\varphi(t)$. Then $u(t)$ is a solution of the \emph{Euler-Poincar\'{e} equation (EPDiff) on $\Diff(M)$}:
\begin{equation}\label{eq:EPDiff}
  m_{t} + \nabla_{u}m + \left(\nabla u\right)^{t}m + (\dive u) m  = 0, \quad m := Au \,,
\end{equation}
where $\left(\nabla u\right)^{t}$ is the Riemannian adjoint (for the metric on $M$) of $\nabla u$. In coordinates, using Einstein's summation convention, this equation reads
\begin{equation*}
  m_{t}^{i} + u^{j}\,m_{,j}^{i} + g^{il}\,g_{jk}\,u^{j}_{,l}\,m^{k} + u^{k}_{,k}\,m^{i} =0,
\end{equation*}
where $(g_{ij})$ is the Riemannian metric on $M$ and $(g^{ij})$, its inverse.

When $A$ is invertible, the EPDiff equation~\eqref{eq:EPDiff} can be rewritten as
\begin{equation}\label{eq:Diff-Euler-Arnold}
  u_{t} = - A^{-1}\left\{ \nabla_{u}Au + \left(\nabla u\right)^{t}Au + (\dive u) Au \right\},
\end{equation}
which is the \emph{Euler--Arnold equation} for $\Diff(M)$.

As acknowledged by Arnold himself, his seminal paper concentrated on the \emph{geometrical ideas} and not on the analytical difficulties that are inherent when \emph{infinite dimensional manifolds} are involved. In 1970, Ebin \& Marsden~\cite{EM1970} reconsidered this geometric approach from the analytical point of view (see also~\cite{EMF1972,Shn1985,Bre1989,Shn1994,Bre1999,Che2010}). They proposed to look at the \emph{Fr\'{e}chet Lie group} of smooth diffeomorphisms as an \emph{inverse limit of Hilbert manifolds}, following some ideas of Omori~\cite{Omo1970,Omo1997}. The remarkable observation is that, in this framework, the Euler equation (a PDE) can be recast as an ODE (the geodesic equation) on these Hilbert manifolds. Furthermore, following their approach, if we can prove \emph{local existence and uniqueness of the geodesics} (ODE), then the EPDiff equation~\eqref{eq:EPDiff} is \emph{well-posed}. They solved moreover the problem, when the inertia operator is a \emph{differential operator} (see also~\cite{Shk1998,Shk2000,CK2003,TY2005,Gay2009a,MP2010,MM2013}).

Most examples issued from mathematical physics correspond to integer $H^{k}$-metrics on diffeomorphism groups, for which the inertia operator is a differential operator. The first examples of Euler--Arnold equations with a \emph{non-local inertia operator} appear to be the modified Constantin--Lax--Majda equation~\cite{Wun2010,EKW2012} and the Euler--Weil--Peterson equation~\cite{Gay2009}

In this paper, we will consider the case when the ambient manifold $M$ is the torus $\Td$ or the Euclidean space $\Rd$, and when the inertia operator is a \emph{Fourier multiplier}. The Constantin--Lax--Majda equation and the Euler--Weil--Peterson equation are special occurrences of this theory, as well as every $H^{s}$ metric ($s$ real) on $\DiffTd$ or $\DiffRd$.

Classical arguments used to establish local existence of the geodesics when $A$ is a differential operator are no longer sufficient when $A$ is non-local and more work is required.

The goal of this survey is to present and summarize a series of studies~\cite{EKW2012,EK2014,BEK2015} on the local well-posedness problem for the general EPDiff equation on $\DiffTd$ or $\DiffRd$ when the inertia operator is a non-local Fourier multiplier.

In Section~\ref{sec:framework}, we recall basic material and fix notations. In Section~\ref{sec:reduction}, we show using two different methods (the \emph{spray method} and the \emph{particle-trajectory method}) that the local well-posedness problem reduces to show that the twisted map
\begin{equation*}
  \varphi \mapsto A_{\varphi} := R_{\varphi}\circ A\circ R_{\varphi^{-1}},
\end{equation*}
extends to a smooth map between some Hilbert approximation manifolds. Section~\ref{sec:twisted-map} is devoted to establish the smoothness of this extended twisted map, when $A$ is a Fourier multiplier of class $S^{r}$. Finally, in Section~\ref{sec:local-well-posedness}, we prove local well-posedness of the EPDiff equation, first in the Hilbert setting, and then in the smooth category, using \emph{a no loss, no gain} argument.


\section{Notations and background material}
\label{sec:framework}

In this paper, we consider first the group $\DiffTd$ of smooth diffeomorphisms of the $d$-dimensional torus isotopic to the identity. We want to consider this group as an ``infinite dimensional Lie group''. This requires first to define a differentiable structure on it and check that composition and inversion are smooth maps for this structure.

The first and most intuitive approach is to endow this group with a \emph{Fr\'{e}chet manifold} structure, modelled on the \emph{Fr\'{e}chet vector space} $C^{\infty}(\Td,\Rd)$, the space of $\ZZ^{d}$-periodic smooth maps from $\Rd$ to $\Rd$, with the topology defined by the family of semi-norms $\left(\norm{\cdot}_{C^{k}}\right)_{k \in \NN}$. Composition and inversion are smooth maps for this structure, and we can consider $\DiffTd$ as a \emph{Fr\'{e}chet-Lie group} as defined by Hamilton~\cite[Section 4.6]{Ham1982}. With this differentiable structure, the Lie algebra of $\DiffTd$ is $\Vect(\Td)$, the space of smooth vector fields on the torus, which is isomorphic to $C^{\infty}(\Td,\Rd)$. The Lie bracket is given by
\begin{equation*}
  [u,w] = du.w - dw.u.
\end{equation*}
Since moreover $\Td$ is compact, $\DiffTd$ is a \emph{regular Fr\'{e}chet Lie group} in the sense of Milnor~\cite{Mil1984}. In particular, each element $u$ of the Lie algebra $\Vect(\Td)$, corresponds to a one-parameter subgroup of $\DiffTd$.

There is however a serious weakness of the Fr\'{e}chet category. The topological dual of a Fr\'{e}chet space $E$ and more generally the space $\mathcal{L}(E,F)$ of continuous linear mappings between two Fr\'{e}chet spaces $E,F$ is not a Fr\'{e}chet space (unless $E,F$ are Banach spaces)~\cite{Ham1982}. This is annoying if one aims to extend differential geometry to \emph{Fr\'{e}chet manifolds}~\cite{AS1968,Kel1974}.

The definition of a ``good differentiable structure'' on a set $X$, in general, and on the diffeomorphisms group $\Diff(M)$ in particular, is a subtle topic and has been an active research area for decades. Many definitions have emerged, usually not equivalent~\cite{Omo1970,Mic1980,Ham1982,Mil1984,FK1988,Omo1997,KM1997,Igl2013}.

The most general framework is probably the category of \emph{diffeological spaces}~\cite{Igl2013}. These spaces are defined by a \emph{diffeology} (like a topological space is defined by a \emph{topology}) and differentiable mappings are defined as the morphisms of this structure (like continuous mappings between topological spaces). A less general framework, but suitable for calculus on \emph{manifolds of mappings}, is the \emph{convenient calculus} formalized by Krigel and Michor~\cite{Mic1980,KM1997}. It relies on \emph{Fr\"{o}licher spaces}~\cite{FK1988} which are themselves a subcategory of diffeological spaces (see~\cite[pages 99 and 390--391]{Igl2013}).

Coming back to $\DiffTd$, it has a stronger structure than just a \emph{Fr\'{e}chet Lie group}. Indeed, it is the inverse limit of Hilbert manifolds which are themselves topological groups. For this reason, it is called an \emph{ILH-Lie group} following Omori~\cite{Omo1997}. It will be the object of this section to describe these approximation manifolds.

We will also be interested in the diffeomorphism group of $\Rd$. But, since difficulties arise due to the non-compactness of $\Rd$, we cannot use the full group of smooth diffeomorphisms but need to restrict our study to some subgroup with nice behaviour at infinity. Several choices are possible, the subgroup of diffeomorphisms with compact support, the subgroup of rapidly decreasing diffeomorphisms, \ldots. However, the most suitable subgroup on which the theory works well is the following one
\begin{equation}\label{eq:Diff_infinity}
  \DiffRd := \set{\id + u;\; u \in \HRd{\infty} \; \text{and} \; \det(\id + du)> 0}\,,
\end{equation}
where $ \HRd{\infty}$ denotes the space of $\Rd$-valued $H^{\infty}$-functions on $\Rd$, i.e.,
\begin{equation*}
  \HRd{\infty} := \bigcap_{q \ge 0} \HRd{q}\,,
\end{equation*}
and where $\HRd{q}$ denotes the ($\Rd$-valued) Sobolev space on $\Rd$, defined below.

Let $\mathcal{F}$ be the Fourier transform on $\RR^{d}$, defined with the following normalization
\begin{equation*}
  \hat{f}(\xi) = (\mathcal{F} f)(\xi) = \int_{\Rd} e^{-2i\pi \langle x,\xi \rangle} f(x) \, dx
\end{equation*}
so that its inverse $\mathcal{F}^{-1}$ is given by:
\begin{equation*}
  (\mathcal{F}^{-1} \hat{f})(x) = \int_{\Rd} e^{2i\pi \langle x,\xi \rangle} \hat{f}(\xi) \, d\xi \, .
\end{equation*}
For $q\in \RR^{+}$ the Sobolev $H^{q}$-norm of a function $f$ on $\Rd$ is
defined by
\begin{equation*}
  \norm{f}_{H^{q}}^{2} := \norm{(1+\abs{\xi}^{2})^{\frac{q}{2}} \hat{f}}_{L^{2}}^{2}\, .
\end{equation*}
The Sobolev spaces $\HR{q}$ is defined as the closure of the space of compactly supported functions, $C_{c}^{\infty}(\Rd,\RR)$, relatively to this norm and the space $\HRd{q}$ is the space of $\Rd$-valued functions of which each component belongs to $\HR{q}$.

\begin{remark}
  In the case of the torus, we define similarly the space $\HT{q}$, as the closure of $C^{\infty}(\Td,\RR)$ for the norm
  \begin{equation*}
    \norm{f}_{H^{q}}^{2} := \sum_{\xi \in \ZZ^{d}} (1+\abs{\xi}^{2})^{\frac{q}{2}} \abs{\hat{f}(\xi)}^{2}\, ,
  \end{equation*}
  where
  \begin{equation*}
    \hat{f}(\xi) := \int_{\Td} e^{-2i\pi \langle x,\xi \rangle} f(x) \, dx \, .
  \end{equation*}
\end{remark}

It is worth to recall the following \emph{Sobolev embedding lemma} which proof can be found in~\cite[Proposition 2.2]{IKT2013}.

\begin{lemma}\label{lem:sobolev-embedding}
  Let $q > d/2$ then the space $\HRd{q+r}$ can be embedded into the space $C^{r}_{0}(\Rd,\Rd)$ of all $C^r$-functions vanishing at infinity and the space $\HTd{q+r}$ can be embedded into the space $C^r(\Td,\Rd)$, for any integer $r$.
\end{lemma}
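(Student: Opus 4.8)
The plan is to reduce the statement to the scalar, zeroth-order case and then bootstrap. Since $\HRd{q+r}$ is, by definition, the space of $\Rd$-valued maps each of whose $d$ components lies in $\HR{q+r}$, and a map into $\Rd$ is of class $C^r_0$ (resp.\ $C^r$ on $\Td$) iff each of its components is, it suffices to prove the continuous inclusions $\HR{q+r} \hookrightarrow C^r_0(\Rd,\RR)$ and $\HT{q+r} \hookrightarrow C^r(\Td,\RR)$; the vector-valued assertions then follow componentwise.

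For the case $r = 0$ on $\Rd$, I would start from $f \in C_c^\infty(\Rd,\RR)$ and split the Fourier transform as $\hat f(\xi) = (1+\abs{\xi}^2)^{q/2}\hat f(\xi)\cdot(1+\abs{\xi}^2)^{-q/2}$. Cauchy--Schwarz then gives
\[
  \norm{\hat f}_{L^1} \le \norm{f}_{H^q}\left( \int_{\Rd} (1+\abs{\xi}^2)^{-q}\, d\xi \right)^{1/2} = C_q\,\norm{f}_{H^q},
\]
where the integral is finite precisely because $2q > d$, i.e.\ $q > d/2$. Hence $\hat f \in L^1(\Rd)$, the Fourier inversion formula holds pointwise, $f = \mathcal{F}^{-1}\hat f$ is continuous, it tends to $0$ at infinity by the Riemann--Lebesgue lemma, and $\norm{f}_{C^0} \le \norm{\hat f}_{L^1} \le C_q\,\norm{f}_{H^q}$. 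Since $C_c^\infty(\Rd,\RR)$ is dense in $\HR{q}$ and $\bigl(C^0_0(\Rd,\RR),\norm{\cdot}_{C^0}\bigr)$ is complete, this continuous map extends to all of $\HR{q}$.

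For general $r$, I would note that if $f \in \HR{q+r}$ then for every multi-index $\alpha$ with $\abs{\alpha}\le r$ the function $\partial^\alpha f$ lies in $\HR{q}$ (the Fourier multiplier $\xi \mapsto \xi^\alpha$ sends $H^{q+r}$ continuously into $H^q$), so the $r=0$ case yields $\partial^\alpha f \in C^0_0$ with $\norm{\partial^\alpha f}_{C^0} \le C_q\,\norm{f}_{H^{q+r}}$; summing over $\alpha$ gives $f \in C^r_0(\Rd,\RR)$ with a controlled norm. The torus case is identical, replacing $\int_{\Rd}(1+\abs{\xi}^2)^{-q}\,d\xi$ by the series $\sum_{\xi\in\ZZ^d}(1+\abs{\xi}^2)^{-q}$, which again converges iff $2q>d$; here summability of $(\hat f(\xi))$ forces the Fourier series of $f$ to converge uniformly to a continuous — hence, by compactness of $\Td$, bounded — function, and density of $C^\infty(\Td,\RR)$ closes the argument. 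There is no real obstacle in this proof: the only delicate points are the convergence of that integral (resp.\ series), which is exactly where the hypothesis $q>d/2$ is used, and the routine density/completeness step extending the estimate from test functions to all of $H^{q+r}$; alternatively one may simply invoke~\cite[Proposition 2.2]{IKT2013}.
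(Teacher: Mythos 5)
Your argument is correct: the componentwise reduction, the Cauchy--Schwarz bound $\norm{\hat f}_{L^1}\le C_q\norm{f}_{H^q}$ using the convergence of $\int_{\Rd}(1+\abs{\xi}^2)^{-q}\,d\xi$ for $q>d/2$, the density-and-completeness extension, and the bootstrap to derivatives of order $\le r$ via the multiplier $\xi^\alpha$ constitute the standard proof of this embedding. The paper itself gives no proof and simply cites \cite[Proposition 2.2]{IKT2013}, which proceeds along essentially the same lines, so there is nothing to reconcile.
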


We will also recall the following result on extension of pointwise multiplication to a bounded bilinear mapping between Sobolev spaces (see for instance ~\cite[Lemma 2.3]{IKT2013}).

\begin{lemma}\label{lem:pointwise-multiplication}
  Let $q > d/2$ and $0 \le p \le q$ then pointwise multiplication extends to a bounded bilinear mapping
  \begin{equation*}
    \HR{q} \times \HR{p} \to \HR{p}.
  \end{equation*}
  More precisely, there exists $C >0$ such that
  \begin{equation*}
    \norm{fg}_{H^{p}} \le C \norm{f}_{H^{q}} \norm{g}_{H^{p}},
  \end{equation*}
  for all $f \in \HR{q}$ and $g\in \HR{p}$. In particular $\HR{q}$ is a multiplicative algebra if $q > d/2$.
\end{lemma}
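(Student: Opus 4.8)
The plan is to work entirely on the Fourier side. Since $q>d/2$, Lemma~\ref{lem:sobolev-embedding} gives the embedding $\HR{q}\hookrightarrow L^{\infty}$, and moreover, writing $\langle\xi\rangle:=(1+\abs{\xi}^{2})^{1/2}$, every $f\in\HR{q}$ satisfies $\hat f\in L^{1}$ because $\int\abs{\hat f}\,d\xi=\int\langle\xi\rangle^{-q}\langle\xi\rangle^{q}\abs{\hat f}\,d\xi\le\norm{\langle\cdot\rangle^{-q}}_{L^{2}}\norm{f}_{H^{q}}$, the weight $\langle\cdot\rangle^{-q}$ belonging to $L^{2}(\Rd)$ precisely because $2q>d$. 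Hence for $f\in\HR{q}$ and $g\in\HR{p}\subset L^{2}$ the pointwise product $fg$ is a well-defined element of $L^{2}$ with $\widehat{fg}=\hat f\ast\hat g$, so the whole content of the statement is the inequality $\norm{fg}_{H^{p}}\le C\norm{f}_{H^{q}}\norm{g}_{H^{p}}$ with $C=C(p,q,d)$ (one may, if preferred, first prove it for $f,g$ Schwartz and extend by density, but this is not needed).

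Next I would set $F(\xi):=\langle\xi\rangle^{q}\abs{\hat f(\xi)}$ and $G(\xi):=\langle\xi\rangle^{p}\abs{\hat g(\xi)}$, so $F,G\in L^{2}(\Rd)$ with $\norm{F}_{L^{2}}=\norm{f}_{H^{q}}$, $\norm{G}_{L^{2}}=\norm{g}_{H^{p}}$, and from $\widehat{fg}=\hat f\ast\hat g$ bound pointwise
\begin{equation*}
  \langle\xi\rangle^{p}\,\abs{\widehat{fg}(\xi)}\le\int_{\Rd}K(\xi,\eta)\,F(\eta)\,G(\xi-\eta)\,d\eta,\qquad K(\xi,\eta):=\frac{\langle\xi\rangle^{p}}{\langle\eta\rangle^{q}\,\langle\xi-\eta\rangle^{p}}.
\end{equation*}
I would then split the $\eta$-domain $\Rd=\Omega_{1}\cup\Omega_{2}$ with $\Omega_{1}=\{\abs{\eta}\le\abs{\xi-\eta}\}$ and $\Omega_{2}=\{\abs{\eta}>\abs{\xi-\eta}\}$. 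On $\Omega_{1}$ one has $\abs{\xi}\le2\abs{\xi-\eta}$, whence $\langle\xi\rangle\le2\langle\xi-\eta\rangle$ and $K(\xi,\eta)\le2^{p}\langle\eta\rangle^{-q}$. On $\Omega_{2}$ one has $\abs{\xi}\le2\abs{\eta}$ and $\langle\xi-\eta\rangle\le\langle\eta\rangle$; since $p\le q$ this gives $K(\xi,\eta)\le2^{p}\langle\eta\rangle^{p-q}\langle\xi-\eta\rangle^{-p}\le2^{p}\langle\xi-\eta\rangle^{-q}$. Consequently
\begin{equation*}
  \langle\xi\rangle^{p}\,\abs{\widehat{fg}(\xi)}\le2^{p}\Big(\big[(\langle\cdot\rangle^{-q}F)\ast G\big](\xi)+\big[F\ast(\langle\cdot\rangle^{-q}G)\big](\xi)\Big),
\end{equation*}
and taking $L^{2}$-norms and combining Young's inequality $\norm{\phi\ast\psi}_{L^{2}}\le\norm{\phi}_{L^{1}}\norm{\psi}_{L^{2}}$ with the Cauchy--Schwarz bound $\norm{\langle\cdot\rangle^{-q}F}_{L^{1}}\le\norm{\langle\cdot\rangle^{-q}}_{L^{2}}\norm{F}_{L^{2}}$ yields
\begin{equation*}
  \norm{fg}_{H^{p}}\le2^{p+1}\,\norm{\langle\cdot\rangle^{-q}}_{L^{2}(\Rd)}\,\norm{f}_{H^{q}}\,\norm{g}_{H^{p}},
\end{equation*}
which is the claim; the last assertion is the case $p=q$.

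The only genuinely delicate point is $\Omega_{2}$, where it is the factor $g$ that carries the low frequency. The naive estimate there (bounding $\langle\xi-\eta\rangle^{-p}$ by $1$ and hoping to place the convolution weight on $g$) breaks down for small $p$, since $\langle\cdot\rangle^{-p}\notin L^{2}(\Rd)$ unless $p>d/2$. The point of the argument above is to exploit the slack $q-p\ge0$: one absorbs $\langle\xi\rangle^{p}$ into $\langle\eta\rangle^{p}$, then trades the surplus decay $\langle\eta\rangle^{p-q}$ for $\langle\xi-\eta\rangle^{p-q}$ (legitimate because $\langle\xi-\eta\rangle\le\langle\eta\rangle$ on $\Omega_{2}$), so that the $L^{2}$-integrable weight $\langle\cdot\rangle^{-q}$ always lands on the low-frequency variable. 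The torus case is handled identically, integrals over $\Rd$ being replaced by sums over $\ZZ^{d}$ and $\langle\cdot\rangle^{-q}\in L^{2}(\Rd)$ by $\sum_{\xi\in\ZZ^{d}}\langle\xi\rangle^{-2q}<\infty$, again valid exactly because $2q>d$.
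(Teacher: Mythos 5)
Your proof is correct and complete. The paper itself does not prove this lemma; it simply quotes it from Inci--Kappeler--Topalov \cite[Lemma 2.3]{IKT2013}, so there is no in-paper argument to compare against --- but what you have written is precisely the standard frequency-splitting proof of this Sobolev product estimate, and every step checks out: the kernel bound $K(\xi,\eta)\le 2^{p}\langle\eta\rangle^{-q}$ on $\set{\abs{\eta}\le\abs{\xi-\eta}}$ and $K(\xi,\eta)\le 2^{p}\langle\xi-\eta\rangle^{-q}$ on the complement (the latter correctly using both $p\le q$ and $\langle\xi-\eta\rangle\le\langle\eta\rangle$), followed by Young's inequality and $\langle\cdot\rangle^{-q}\in L^{2}(\Rd)$ for $2q>d$, gives exactly the claimed constant $C=C(p,q,d)$. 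Your closing remark correctly identifies the only delicate point (keeping the $L^{2}$-integrable weight on the low-frequency factor when $p$ is small), and the adaptation to $\Td$ by replacing integrals with sums over $\ZZ^{d}$ is as routine as you say. The one step you gloss over --- the validity of $\widehat{fg}=\hat f\ast\hat g$ for general $f\in\HR{q}$, $g\in\HR{p}$ --- is adequately covered by your parenthetical density remark, since the paper defines these spaces as closures of smooth compactly supported functions.
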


As already stated, $\DiffRd$ and $\DiffTd$ are \emph{ILH-Lie groups} (see Omori~\cite{Omo1997} for a precise definition). In simple words,
\begin{equation*}
  \DiffRd = \bigcap_{q > 1 + d/2} \DRd{q}, \qquad \DiffTd = \bigcap_{q > 1 + d/2} \DTd{q}\,,
\end{equation*}
where the sets $\DRd{q}$ and $\DTd{q}$ are defined, for $q>\frac{d}2+1$, as follows.
\begin{equation*}
  \DRd{q} := \set{\id + u;\; u \in \HRd{q} \; \text{and} \; \det(\id + du)> 0}\,,
\end{equation*}
and $\DTd{q}$ is the set of $C^{1}$ diffeomorphisms $\varphi$ of the torus $\Td$ isotopic to the identity and such that
\begin{equation*}
  \tilde{\varphi} - \id \in \HTd{q}\,,
\end{equation*}
where $\tilde{\varphi}$ is any lift of $\varphi$ to $\Rd$. Both of these sets are smooth Hilbert manifolds, modelled respectively on $\HRd{q}$ and $\HTd{q}$. For a more detailed treatment of these manifolds, we refer to~\cite{Ebi1970,IKT2013}.

\begin{remark}
  Note, that the tangent bundle $T\DRd{q}$ is a trivial bundle
  \begin{equation*}
    T\DRd{q}\cong\DRd{q}\times \HRd{q}\,,
  \end{equation*}
  because $\DRd{q}$ is an open subset of the Hilbert space $\HRd{q}$. The tangent bundle of the Hilbert manifold $\DTd{q}$ is also trivial but for a different reason. Indeed, let $\mathfrak{t}: T\Td \to \Td \times \Rd$ be a smooth trivialisation of the tangent bundle of the torus. Then
  \begin{equation*}
    \Psi : T\DTd{q} \to \DTd{q}\times \HTd{q}, \qquad \xi \mapsto \mathfrak{t} \circ \xi
  \end{equation*}
  defines a smooth vector bundle isomorphism because $\mathfrak{t}$ is smooth (see~\cite[page~107]{EM1970}).
\end{remark}

The Hilbert manifolds $\DTd{q}$ and $\DRd{q}$ are topological groups (see~\cite{IKT2013} for a modern exposition on the subject). For $\DTd{q}$, this is known since the 1960s~\cite{El1967,Ebi1968,Pal1968,Omo1970,EMF1972}. They are however not Hilbert Lie groups, because composition and inversion are continuous but not smooth (see ~\cite[Proposition 2.6]{IKT2013}). We recall also the following result concerning the right action of $\DRd{q}$ on $\HRd{p}$.

\begin{lemma}[Lemma 2.7 in~\cite{IKT2013}]\label{lem:composition}
  Given any two real numbers $q,p$ with $q>1 + d/2$ and $q\geq p\geq 0$, the mapping
  \begin{equation*}
    \mu^{p}: \HRd{p} \times\DRd{q} \rightarrow \HRd{p}, \qquad (u,\varphi) \mapsto u \circ \varphi
  \end{equation*}
  is continuous. Moreover, the mapping
  \begin{equation*}
    R_{\varphi}: u \mapsto u \circ \varphi
  \end{equation*}
  is \emph{locally bounded}. More precisely, given $C_{1}, C_{2}>0$, there exists a constant $C=C(p,C_{1},C_{2})$ such that
  \begin{equation*}
    \norm{R_{\varphi}}_{\mathcal{L}(H^{p},H^{p})} \leq C ,
  \end{equation*}
  for all $\varphi\in \DRd{q}$ with
  \begin{equation*}
    \norm{\varphi-\id}_{H^{q}} < C_{1} \quad \text{and} \quad \underset{x\in\Rd}{\inf}\left( \det(d\varphi(x))\right) > C_{2}.
  \end{equation*}
\end{lemma}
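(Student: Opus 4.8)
The plan is to first establish the \emph{local boundedness} estimate for $R_{\varphi}$ and then to deduce the joint continuity of $\mu^{p}$ from it by a density argument; the estimate itself is obtained by peeling off one derivative at a time with the chain rule and handling the leftover fractional order by interpolation. First I would dispose of the case $p=0$. Since $q>1+d/2$, Lemma~\ref{lem:sobolev-embedding} (with $r=1$) shows that every $\varphi\in\DRd{q}$ is a genuine $C^{1}$-diffeomorphism of $\Rd$ whose differential $d\varphi$ is bounded in terms of $\norm{\varphi-\id}_{H^{q}}$; together with $\inf_{x}\det(d\varphi(x))>C_{2}$ this makes $\varphi$ bi-Lipschitz with constants depending only on $C_{1},C_{2}$. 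The change of variables $y=\varphi(x)$ then gives
\[
  \norm{u\circ\varphi}_{L^{2}}^{2}=\int_{\Rd}\abs{u(\varphi(x))}^{2}\,dx=\int_{\Rd}\abs{u(y)}^{2}\,\abs{\det d(\varphi^{-1})(y)}\,dy\le C_{2}^{-1}\norm{u}_{L^{2}}^{2}.
\]

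Next, for $1\le p\le q$, assuming the estimate already known for the exponent $p-1$ (which is still in $[0,q]$), I would use, for $u\in\CSCRd$, the chain rule in the form $\partial_{i}(u\circ\varphi)=(\partial_{i}u)\circ\varphi+\sum_{j}\big((\partial_{j}u)\circ\varphi\big)\,\partial_{i}(\varphi-\id)^{j}$, together with the elementary norm equivalence $\norm{f}_{H^{p}}^{2}\sim\norm{f}_{L^{2}}^{2}+\sum_{i}\norm{\partial_{i}f}_{H^{p-1}}^{2}$, the pointwise multiplication Lemma~\ref{lem:pointwise-multiplication} (applicable since $q-1>d/2$ and $0\le p-1\le q-1$), the bound $\norm{\partial_{i}(\varphi-\id)^{j}}_{H^{q-1}}\le\norm{\varphi-\id}_{H^{q}}<C_{1}$, and the inductive estimate applied to $\partial_{i}u$ and $\partial_{j}u$; this yields $\norm{u\circ\varphi}_{H^{p}}\le C(p,C_{1},C_{2})\norm{u}_{H^{p}}$, which extends to all $u\in\HRd{p}$ by density since the constant is uniform. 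Iterating this reduction brings any $p\ge 1$ down to a fractional exponent $p_{0}=p-\lfloor p\rfloor\in[0,1)$, and for this base interval I would interpolate between the $H^{0}$ bound above and the $H^{1}$ bound just obtained, using $[\HRd{0},\HRd{1}]_{\theta}=\HRd{\theta}$, so that $\norm{R_{\varphi}}_{\mathcal{L}(H^{\theta},H^{\theta})}\le\norm{R_{\varphi}}_{\mathcal{L}(H^{0})}^{1-\theta}\norm{R_{\varphi}}_{\mathcal{L}(H^{1})}^{\theta}$, again with constants depending only on $C_{1},C_{2}$. (Alternatively, for $0<p_{0}<1$ one can estimate the Gagliardo seminorm of $u\circ\varphi$ directly via the same bi-Lipschitz change of variables.)

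For the continuity of $\mu^{p}$ at a point $(u_{0},\varphi_{0})$, I would note first that on a neighbourhood of $\varphi_{0}$ in $\DRd{q}$ the quantities $\norm{\varphi-\id}_{H^{q}}$ and $\inf_{x}\det(d\varphi(x))$ stay controlled — the latter because $\varphi\mapsto\det(d\varphi)$ is continuous into $C^{0}$ by $\HRd{q-1}\hookrightarrow C^{0}$ and the fact that $\det$ is polynomial — so the local bound above holds with uniform constants there. Writing $u\circ\varphi-u_{0}\circ\varphi_{0}=(u-u_{0})\circ\varphi+(u_{0}\circ\varphi-u_{0}\circ\varphi_{0})$, the first term has $H^{p}$-norm $\le C\norm{u-u_{0}}_{H^{p}}\to 0$. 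For the second term I would first take $u_{0}\in\CSCRd$ and write, with $\varphi_{s}:=\varphi_{0}+s(\varphi-\varphi_{0})$ (still a diffeomorphism with controlled constants once $\varphi$ is close to $\varphi_{0}$), $u_{0}\circ\varphi-u_{0}\circ\varphi_{0}=\int_{0}^{1}\big((du_{0})\circ\varphi_{s}\big)\cdot(\varphi-\varphi_{0})\,ds$; the multiplication lemma and the composition bound just proved then give $\norm{u_{0}\circ\varphi-u_{0}\circ\varphi_{0}}_{H^{p}}\le C\,\norm{du_{0}}_{H^{q}}\,\norm{\varphi-\varphi_{0}}_{H^{q}}\to 0$. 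The general case $u_{0}\in\HRd{p}$ then follows by approximating $u_{0}$ with $\CSCRd$-functions and an $\varepsilon/3$ argument that uses the uniform bound $\norm{(u_{0}^{n}-u_{0})\circ\varphi}_{H^{p}}\le C\norm{u_{0}^{n}-u_{0}}_{H^{p}}$.

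I expect the main obstacle to be the fractional Sobolev range: the chain rule produces only integer-order estimates, so one is forced either to interpolate — which is clean here precisely because both endpoints $\HRd{0}$ and $\HRd{1}$ are available — or to run a direct difference-quotient computation, and in either case the real point is to keep every constant depending solely on $C_{1}$ and $C_{2}$, so that the resulting bound is genuinely \emph{local and uniform}, which is exactly what the subsequent arguments on the spray and on the twisted map will require.
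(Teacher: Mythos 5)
Your argument is correct; note, though, that the paper does not prove this lemma at all---it is quoted verbatim from~\cite[Lemma~2.7]{IKT2013}---and your proof (change of variables for $p=0$, chain rule plus the multiplication lemma to step up by integer orders, interpolation between $\HRd{0}$ and $\HRd{1}$ for the fractional remainder, then density and an $\varepsilon/3$ argument for joint continuity) is essentially the standard one found in that reference. The only points worth making explicit are that $\varphi=\id+u$ with $\det(\id+du)>0$ and $u\in C^{1}_{0}$ is genuinely a global diffeomorphism of $\Rd$ (properness plus the covering-map argument), and that the bounded extension of $R_{\varphi}$ from $\CSCRd$ to $\HRd{p}$ coincides with actual composition, both of which are routine.
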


Finally, let $J_{\varphi}$ denotes the Jacobian determinant of a diffeomorphism $\varphi$ in $\DRd{q}$. From lemma~\ref{lem:pointwise-multiplication}, we deduce that the mapping
\begin{equation*}
  \varphi \mapsto J_{\varphi}, \qquad \DRd{q} \to \HR{q-1}
\end{equation*}
is smooth and we have moreover the following result, which is a reformulation of~\cite[Lemma 2.5]{IKT2013}.

\begin{lemma}\label{lem:inverse-Jacobian-smoothness}
  Let $q > 1 + d/2$ and $0 \le p \le q$. Given $\varphi \in \DRd{q}$ and $f \in \HR{p}$, the function $f/J_{\varphi}$ belongs to $\HR{p}$ and the mapping
  \begin{equation*}
    (\varphi,f) \mapsto \frac{f}{J_{\varphi}}, \qquad \DRd{q} \times \HR{p} \to \HR{p}
  \end{equation*}
  is smooth. The same result holds for the torus $\Td$.
\end{lemma}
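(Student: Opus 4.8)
The plan is to write $f/J_{\varphi}=f\cdot\rho(\varphi)$ with $\rho(\varphi):=1/J_{\varphi}$ and to reduce the statement to the smoothness of the reciprocal Jacobian alone. Recall from the discussion just before the statement that $\varphi\mapsto J_{\varphi}-1$ is a smooth map $\DRd{q}\to\HR{q-1}$ (being polynomial in the entries of $d\varphi$, via Lemma~\ref{lem:pointwise-multiplication}), and that, since $q-1>d/2$, pointwise multiplication $\HR{q-1}\times\HR{p}\to\HR{p}$ is bounded bilinear by Lemma~\ref{lem:pointwise-multiplication}; this last point is where the constraint on $p$ relative to $q$ is used. Hence, once I show that $\rho(\varphi)=1+g(\varphi)$ with $\varphi\mapsto g(\varphi)$ a smooth map $\DRd{q}\to\HR{q-1}$, the map $(\varphi,f)\mapsto f+g(\varphi)\,f$ is a composition of this smooth map with the bounded bilinear multiplication (and the identity), hence smooth $\DRd{q}\times\HR{p}\to\HR{p}$; in particular $f/J_{\varphi}\in\HR{p}$. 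The same scheme works verbatim on the torus, with $\HT{q-1}$ replacing $\HR{q-1}$.

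First I would isolate the open set on which ``divide by $1+g$'' makes sense. For $\varphi\in\DRd{q}$ one has $J_{\varphi}>0$ everywhere by definition, while $J_{\varphi}-1\in\HR{q-1}\hookrightarrow C^{0}_{0}(\Rd,\RR)$ by the Sobolev embedding (Lemma~\ref{lem:sobolev-embedding}), so $J_{\varphi}$ tends to $1$ at infinity; being continuous and positive it then satisfies $\inf_{\Rd}J_{\varphi}>0$. Thus $\varphi\mapsto J_{\varphi}-1$ maps $\DRd{q}$ into the set
\[ \mathcal{V}:=\set{ g\in\HR{q-1} \;:\; \inf_{\Rd}(1+g)>0 }, \]
which is open because $g\mapsto\inf_{\Rd}(1+g)$ is $1$-Lipschitz for $\norm{\cdot}_{C^{0}}$, hence continuous for $\norm{\cdot}_{H^{q-1}}$ by Lemma~\ref{lem:sobolev-embedding}. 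Everything is now reduced to showing that the map
\[ N:\mathcal{V}\to\HR{q-1}, \qquad N(g):=\frac{1}{1+g}-1=\frac{-g}{1+g}, \]
is well defined and smooth; then $g(\varphi):=N(J_{\varphi}-1)$ does the job, by composition with the previous step.

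The hard part is the smoothness of $N$, and this is where the genuinely analytic (non-local) work lies. A direct application of the implicit function theorem is circular: linearising $J\rho=1$ at $(J_{0},\rho_{0})$ asks the multiplication operator $\ell\mapsto J_{0}\,\ell$ to be invertible on $\HR{q-1}$, but its inverse is multiplication by $1/J_{0}$, whose boundedness on $\HR{q-1}$ is exactly the statement at issue. Instead I would localise and reduce to a substitution (Nemytskii) operator: fix $g_{0}\in\mathcal{V}$ and a ball $\mathcal{V}_{0}=\set{ g:\norm{g-g_{0}}_{H^{q-1}}<r }$ small enough that, by Lemma~\ref{lem:sobolev-embedding}, every $g\in\mathcal{V}_{0}$ takes values in one fixed compact interval $I\subset(-1,\infty)$. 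Since $F(t):=1/(1+t)-1$ is real-analytic on a neighbourhood of $I$, it then suffices to invoke the classical fact that, for $s>d/2$ and $F$ of class $\CS$ on a neighbourhood of a compact interval $I$, the composition operator $g\mapsto F\circ g$ is a smooth (indeed real-analytic if $F$ is) map $\set{ g\in\HR{s}:g(\Rd)\subset I }\to\HR{s}$.

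For self-containedness one can sketch this last fact by induction on $\lceil s\rceil$, using the multiplicative-algebra property of Lemma~\ref{lem:pointwise-multiplication} to check that the candidate $k$-th differential $g\mapsto\big((h_{1},\dots,h_{k})\mapsto(F^{(k)}\circ g)\,h_{1}\cdots h_{k}\big)$ is well defined and continuous, and that it produces the right Taylor remainder via Fa\`{a} di Bruno's formula. Taking $s=q-1$ gives smoothness of $N$ near $g_{0}$, hence on all of $\mathcal{V}$, so $N(g)\in\HR{q-1}$ for every $g\in\mathcal{V}$ and $f/J_{\varphi}\in\HR{p}$ as claimed. On the torus the argument is identical, with $C^{0}(\Td,\RR)$ in place of $C^{0}_{0}(\Rd,\RR)$ and with $J_{\varphi}>0$ on the compact $\Td$ already forcing $\inf J_{\varphi}>0$. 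The only genuine obstacle is this smoothness of the substitution operator on $\HR{q-1}$; the remaining steps are formal once Lemmas~\ref{lem:sobolev-embedding} and~\ref{lem:pointwise-multiplication} are in hand.
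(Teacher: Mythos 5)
The paper itself gives no proof of this lemma --- it is quoted as a reformulation of \cite[Lemma 2.5]{IKT2013} --- so your self-contained argument is a genuinely different route, and its skeleton is sound: write $f/J_{\varphi}=f+g(\varphi)f$, note that $\varphi\mapsto J_{\varphi}-1$ is polynomial in the entries of $du$ and hence smooth into $\HR{q-1}$, and reduce everything to the smoothness of the Nemytskii operator $g\mapsto 1/(1+g)-1$ on the open set $\mathcal{V}\subset\HR{q-1}$. Your remark that the implicit function theorem would be circular here is correct, and the composition-operator fact you invoke is indeed the crux (it is essentially what is proved in the appendix of \cite{IKT2013}). Two caveats on that step: on $\Rd$ one must use $F(0)=0$, which you arranged by subtracting $1$; and for non-integer $s=q-1$ the ``induction on $\lceil s\rceil$ via Fa\`{a} di Bruno'' sketch is not enough as written --- the fractional case needs the difference-quotient characterization of $H^{s}$, or a paraproduct/Moser estimate. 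Citing the classical result is legitimate, but your sketch only covers integer orders.

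There is, however, one concrete gap: the range of $p$. Your key step is Lemma~\ref{lem:pointwise-multiplication} applied with first factor $g(\varphi)\in\HR{q-1}$, and that requires $0\le p\le q-1$, not $0\le p\le q$ as in the hypothesis you set out to prove; you write that ``this is where the constraint on $p$ relative to $q$ is used'' without noticing the mismatch. The mismatch is not repairable, because for $p=q$ the statement is false: take $d=1$, $q=2$, $u\in\HR{2}$ supported in $[-1,1]$ with $\norm{u_{x}}_{\infty}<1$ but $u\notin H^{3}_{\mathrm{loc}}$, and $f\in\HR{2}$ equal to $1$ on $[-1,1]$; then on $(-1,1)$ one has $f/J_{\varphi}=1/(1+u_{x})$, and differentiating shows this lies in $H^{2}_{\mathrm{loc}}$ only if $u_{xx}\in H^{1}_{\mathrm{loc}}$, i.e.\ $u\in H^{3}_{\mathrm{loc}}$. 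So your argument actually establishes the lemma for $0\le p\le q-1$, which is the correct hypothesis (it is the one in \cite[Lemma 2.5]{IKT2013}) and the only range used in this paper --- in Theorems~\ref{thm:smoothness-spray} and~\ref{thm:smoothness-X} the lemma is always applied with $p=q-r\le q-1$. You should state this restriction explicitly rather than claim the result for all $p\le q$.
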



\section{Reduction of the problem}
\label{sec:reduction}

It is well known that analysis in Fr\'{e}chet manifolds, like $\DiffTd$ or $\DiffRd$, is quite involved as the inverse function theorem, the implicit function theorem and the Cauchy--Lipschitz theorem do not hold~\cite{Ham1982,KM1997}. The strategy of Ebin \& Marsden~\cite{EM1970} to solve the well-posedness problem for the Euler equation was first to recast the equation as an ODE on some approximating Hilbert manifolds and prove local existence (in the Hilbert setting) by standard ODE techniques. Then, using symmetries of the equation, they proved that if the initial data is smooth, the solution remains smooth at each time (no loss, nor gain in spatial regularity) leading to a well-posedness result in the smooth category.

In this section, we will expose two ways to solve equation~\eqref{eq:Diff-Euler-Arnold}; the first one uses the second-order \emph{spray method} of Ebin-Marsden~\cite{EM1970}, while the other one is based on a reduction due to Ebin~\cite{Ebi1984} (see also Majda-Bertozzi~\cite{MB2002}, where the latter is described as the \emph{particle-trajectory method}). In both cases, we need to show (in order to apply standard ODE techniques) that a certain vector field defined, \textit{a priori}, in the smooth category extends smoothly to some approximating Hilbert manifolds.

We will introduce first these vector fields and show then that their extension (to some Hilbert manifolds) and smoothness reduce to prove that the \emph{twisted map}
\begin{equation*}
  A_{\varphi} := R_{\varphi}\circ A\circ R_{\varphi^{-1}},
\end{equation*}
\textit{i.e.} the inertia operator $A$ twisted by the right translation $R_{\varphi}$, extends smoothly to these approximating Hilbert manifolds.

\subsection{The spray method}
\label{subsec:spray}

Starting from the Euler--Arnold equation~\eqref{eq:Diff-Euler-Arnold}, we shall retrieve the Lagrangian formalism. Let $u(t)$ be a solution of the EPDiff equation and let $\varphi(t)$ be the flow of the time dependant vector field $u(t)$, \textit{i.e.}, $\varphi_{t} = u \circ \varphi$. Set $v := \varphi_{t}$ so that $v = u \circ \varphi$.

First, we want to compute the time derivative of the curve $v(t)$ in the bundle $T\Diff(M)$. To do so, we fix $x \in M$ and consider the vector field $X(t) = u(t,\varphi(t,x))$ on $TM$ defined along the curve $x(t) = \varphi(t,x)$ on $M$. Using the covariant derivative (in $M$) along the curve $x(t)$, we get in a local chart on $M$:
\begin{equation*}
  \left(\frac{D}{Dt}X(t)\right)^{k} = X^{k}_{t}(t) + \Gamma_{ij}^{k}(x(t))\,X^{i}(t)\,x^{j}_{t}(t).
\end{equation*}
But
\begin{equation*}
  X^{k}_{t}(t) = u^{k}_{t}(t,\varphi(t,x)) + \frac{\partial u^{k}}{\partial x^{l}}(t,\varphi(t,x))\,\varphi^{l}_{t}(t,x)
\end{equation*}
and
\begin{equation*}
  x^{j}_{t}(t) = \varphi^{j}_{t}(t,x) = u^{j}(t,\varphi(t,x)).
\end{equation*}
Hence
\begin{multline*}
  \left(\frac{D}{Dt}X(t)\right)^{k} =  u^{k}_{t}(t,\varphi(t,x)) + \frac{\partial u^{k}}{\partial x^{l}}(t,\varphi(t,x))\,u^{l}(t,\varphi(t,x))
  \\
  +  \Gamma_{ij}^{k}(\varphi(t,x))\,u^{i}(t,\varphi(t,x))\, u^{j}(t,\varphi(t,x))
\end{multline*}
and thus
\begin{equation*}
  v_{t} = u_{t} \circ \varphi + (\nabla_{u}u)\circ \varphi.
\end{equation*}

Therefore, we get
\begin{align*}
  v_{t} & = \left\{- A^{-1}\Big(\nabla_{u}Au + (\nabla u)^{t}Au + (\dive u) Au \Big) + \nabla_{u}u \right\}\circ \varphi \\
        & = \left\{ A^{-1}\Big([\nabla_{u},A]u - \left(\nabla u\right)^{t}Au - (\dive u) Au\Big)\right\}\circ \varphi,
\end{align*}
and $u$ solves the EPDiff equation~\eqref{eq:EPDiff}, if and only if $(\varphi,v)$ is a solution of
\begin{equation}\label{eq:spray}
  \left\{\begin{aligned}
  \varphi_{t} &= v, \\
  v_{t} &= S_{\varphi}(v),
  \end{aligned}
  \right.
\end{equation}
where
\begin{equation*}
  S_{\varphi}(v):=\left(R_{\varphi}\circ S\circ R_{\varphi^{-1}} \right)(v),
\end{equation*}
and
\begin{equation*}
  S(u):= A^{-1}\Big([\nabla_{u},A]u - \left(\nabla u\right)^{t}Au - (\dive u) Au\Big).
\end{equation*}

The \emph{second order vector field} on $\Diff(M)$ (which is a vector field on $T\Diff(M)$), defined by
\begin{equation}\label{eq:geodesic-spray}
  F : (\varphi,v)\mapsto \left(\varphi,v,v, S_{\varphi}(v)\right)
\end{equation}
is called the \emph{geodesic spray}, following Lang~\cite{Lan1999}. A geodesic is by definition an integral curve of the geodesic spray.

We will now establish that the extension and smoothness of the spray on the Hilbert manifold $T\DRd{q}$ reduces to the extension and smoothness of the twisted map
\begin{equation*}
  \varphi \mapsto R_{\varphi}\circ A\circ R_{\varphi^{-1}}, \qquad \DRd{q} \to \mathcal{L}(\HRd{q},\HRd{q-r}).
\end{equation*}

\begin{theorem}\label{thm:smoothness-spray}
  Let $r \ge 1$ and $q > 1 + d/2$, with $q \ge r$. Suppose that $A$ extends to $\mathrm{Isom}(\HRd{q},\HRd{q-r})$, and moreover that
  \begin{equation*}
    \varphi \mapsto A_{\varphi} = R_{\varphi} \circ A \circ R_{\varphi^{-1}}, \qquad \DRd{q} \to \mathcal{L}(\HRd{q},\HRd{q-r})
  \end{equation*}
  is smooth. Then the geodesic spray
  \begin{equation*}
    (\varphi, v) \mapsto S_{\varphi} (v) = R_{\varphi} \circ S \circ R_{\varphi^{-1}} (v),
  \end{equation*}
  where
  \begin{equation}\label{eq:S-map}
    S(u) = A^{-1} \left\{ [A,\nabla_{u}] u - (\nabla u )^{t} Au - (\dive u) Au \right\}
  \end{equation}
  extends smoothly to $T\DRd{q} = \DRd{q}\times \HRd{q}$. The same result holds for the torus $\Td$.
\end{theorem}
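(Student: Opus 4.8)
The plan is to reduce the smoothness of $S_\varphi$ to the already-assumed smoothness of $\varphi \mapsto A_\varphi$, together with the elementary smoothness facts recorded in Section~\ref{sec:framework} (Lemmas~\ref{lem:composition}, \ref{lem:inverse-Jacobian-smoothness} and \ref{lem:pointwise-multiplication}). The key observation is that $S_\varphi$ is built by composing $R_\varphi$ and $R_{\varphi^{-1}}$ around operators — $A$, $\nabla_u$, $(\nabla u)^t$, $\dive$ — each of which is either the Fourier multiplier $A$ itself (handled by hypothesis) or a first-order differential operator with \emph{smooth constant coefficients} on $\Rd$; the conjugation of such a differential operator by $R_\varphi$ introduces Jacobian and $d\varphi$ factors, which are controlled by the multiplication lemma and by Lemma~\ref{lem:inverse-Jacobian-smoothness}. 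So the strategy is: rewrite $S_\varphi(v)$ entirely in terms of the twisted map $A_\varphi$, the twisted map $(A^{-1})_\varphi = R_\varphi \circ A^{-1} \circ R_{\varphi^{-1}}$, and twisted versions of the first-order operators, then check that each piece is a smooth map of $(\varphi,v)$ with the correct loss of derivatives.

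More precisely, I would proceed as follows. First, since $A \in \mathrm{Isom}(\HRd{q}, \HRd{q-r})$ and $\varphi \mapsto A_\varphi$ is smooth with values in $\mathcal{L}(\HRd{q}, \HRd{q-r})$, I claim $(A^{-1})_\varphi = (A_\varphi)^{-1}$ extends to a smooth map $\DRd{q} \to \mathcal{L}(\HRd{q-r}, \HRd{q})$: indeed $(A_\varphi)^{-1} = R_\varphi \circ A^{-1} \circ R_{\varphi^{-1}}$ pointwise, $A_\varphi$ is invertible for each $\varphi$ because $R_\varphi$ is (with continuous inverse $R_{\varphi^{-1}}$ on the relevant Sobolev spaces by Lemma~\ref{lem:composition}), and inversion $\mathrm{Isom} \to \mathrm{Isom}$ is smooth between the open sets of invertible operators in the respective Banach spaces of bounded operators; composing with the smooth map $\varphi \mapsto A_\varphi$ gives smoothness of $\varphi \mapsto (A_\varphi)^{-1}$. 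Second, I would treat the three "differential" terms. The term $(\nabla u)^t A u$ twisted becomes $R_\varphi\big( (\nabla u)^t (Au) \big)$ with $u = v \circ \varphi^{-1}$; writing $m := A_\varphi v = R_\varphi(Au)$ (smooth in $(\varphi,v)$ into $\HRd{q-r}$ by hypothesis and Lemma~\ref{lem:composition}) and expressing $R_\varphi\big((\nabla u)^t\big) $ via the chain rule $d(u\circ\varphi) = (du \circ \varphi)\, d\varphi$ — so that $du\circ\varphi = d(v)\,( d\varphi)^{-1}$ — one sees $R_\varphi\big((\nabla u)^t Au\big)$ is a pointwise-algebraic expression in $dv$, $(d\varphi)^{-1}$ (hence $1/J_\varphi$ and the smooth cofactor entries), the constant metric coefficients, and $m$. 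By Lemma~\ref{lem:pointwise-multiplication} and Lemma~\ref{lem:inverse-Jacobian-smoothness}, and since $v \mapsto dv$ is a bounded linear (hence smooth) map $\HRd{q} \to \HRd{q-1}$ with $q-1 \ge q-r$ (using $r\ge 1$), this is a smooth map $\DRd{q}\times\HRd{q}\to\HRd{q-r}$; the divergence term $(\dive u) Au$ is handled identically, being the trace of $du$ times $m$. For the commutator term $[A,\nabla_u]u$, I would write it as $A(\nabla_u u) - \nabla_u(Au)$ and twist each summand: $R_\varphi(\nabla_u u) = R_\varphi\big(du\,u + \Gamma(u,u)\big)$ is again a smooth $\HRd{q-1}$-valued (hence $\HRd{q-r}$-valued) expression in $dv, (d\varphi)^{-1}, v$, so $R_\varphi A(\nabla_u u) = A_\varphi\big(R_\varphi(\nabla_u u)\big)$ is smooth into $\HRd{q-2r}$ — and here a little care is needed with the derivative count, which I discuss below; similarly $R_\varphi\big(\nabla_u(Au)\big)$ is an algebraic expression in $d\varphi$, $v$, $m$ and $dm$, with $dm \in \HRd{q-r-1}$. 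Finally, applying the smooth map $(A^{-1})_\varphi$ to the sum recovers $S_\varphi(v)$ and shows it lands smoothly in $\HRd{q}$, giving the smooth extension of $F$ to $T\DRd{q} = \DRd{q}\times\HRd{q}$.

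The main obstacle I anticipate is bookkeeping the loss of derivatives so that each term genuinely maps \emph{into $\HRd{q}$} after applying $(A_\varphi)^{-1}$, i.e. the bracketed quantity must lie in $\HRd{q-r}$ and no worse. The terms $(\nabla u)^t Au$ and $(\dive u)Au$ cost one derivative off $v$ and none off $m$, so they sit in $\HRd{q-r}$ provided $q - 1 \ge q - r$, which is exactly the hypothesis $r \ge 1$. The delicate one is the commutator $[A,\nabla_u]u$: naively $A(\nabla_u u)$ loses $r$ derivatives off $\nabla_u u \in \HRd{q-1}$, landing only in $\HRd{q-1-r}$, while $\nabla_u(Au)$ loses one derivative off $Au \in \HRd{q-r}$, also landing in $\HRd{q-r-1}$ — so each summand individually is one derivative short of $\HRd{q-r}$. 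The resolution, and the one real content of the proof, is that $[A,\nabla_u]$ is a \emph{commutator of a Fourier multiplier of order $r$ with a first-order operator} and is therefore of order $r$ rather than $r+1$: this is precisely the Kato–Ponce / commutator-estimate phenomenon, and it is the reason the paper later restricts $A$ to Fourier multipliers of class $S^r$ with good symbol estimates. In this survey's logic, however, that gain is packaged into the standing hypotheses on $A$ — one should invoke the commutator bound $[A,\nabla_u] : \HRd{q} \to \HRd{q-r}$ (uniformly for $u$ bounded in $\HRd{q}$), established in Section~\ref{sec:twisted-map}, and note that its twisted version $R_\varphi\circ[A,\nabla_u]\circ R_{\varphi^{-1}}$ is smooth in $(\varphi,v)$ for the same reason $A_\varphi$ is. Granting that, the remaining verifications are the routine Banach-space chain-rule and multiplication arguments sketched above, and the torus case is word-for-word identical, using the periodic analogues of Lemmas~\ref{lem:composition} and~\ref{lem:inverse-Jacobian-smoothness} together with the trivialisation $\Psi$ of $T\DTd{q}$ recorded in the remark after Lemma~\ref{lem:inverse-Jacobian-smoothness}.
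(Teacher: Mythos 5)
Your decomposition into the three terms $Q^{1}(u)=[A,\nabla_{u}]u$, $Q^{2}(u)=(\nabla u)^{t}Au$, $Q^{3}(u)=(\dive u)Au$ plus the inversion $A_{\varphi}^{-1}$ is exactly the paper's, and your treatment of $Q^{2}$, $Q^{3}$ (cofactor/Jacobian algebra plus Lemmas~\ref{lem:pointwise-multiplication} and~\ref{lem:inverse-Jacobian-smoothness}) and of $A_{\varphi}^{-1}$ (openness of $\mathrm{Isom}$ and smoothness of inversion) matches the intended argument. You have also correctly diagnosed that the commutator term is the delicate one: each summand of $A(\nabla_{u}u)-\nabla_{u}(Au)$ individually lands only in $\HRd{q-r-1}$.

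However, your resolution of that term is where the proof breaks. You propose to invoke the commutator bound and the smoothness of $(\varphi,v)\mapsto R_{\varphi}[A,\nabla_{u}]R_{\varphi^{-1}}(v)$ ``for the same reason $A_{\varphi}$ is'' smooth, importing the class-$S^{r}$ machinery of Section~\ref{sec:twisted-map}. This does not work here: Theorem~\ref{thm:smoothness-spray} is a \emph{reduction} theorem stated for a general operator $A$ whose only assumed properties are that it extends to $\mathrm{Isom}(\HRd{q},\HRd{q-r})$ and that $\varphi\mapsto A_{\varphi}$ is smooth; the Fourier-multiplier hypothesis is not available, and even if it were, the smoothness of the twisted commutator (which depends on $u=v\circ\varphi^{-1}$ as well as on $\varphi$) is not ``the same reason'' but a separate claim needing proof. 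The missing idea is the identity
\begin{equation*}
  Q^{1}_{\varphi}(v) = -\,\partial_{\varphi}A_{\varphi}(v,v),
\end{equation*}
which follows from Lemma~\ref{lem:twisted-map-derivatives}: the first G\^{a}teaux derivative of the twisted map at $\varphi$ in the direction $\delta\varphi$ is $R_{\varphi}A_{1}R_{\varphi}^{-1}(v,\delta\varphi)$ with $A_{1}(u_{1})=[\nabla_{u_{1}},A]$, so the twisted commutator term is literally the derivative of the hypothesized-smooth map $\varphi\mapsto A_{\varphi}$ evaluated on the diagonal $(v,v)$. Smoothness of $(\varphi,v)\mapsto Q^{1}_{\varphi}(v)$ into $\HRd{q-r}$ then comes for free from the hypothesis, with no commutator estimate and no derivative-counting needed. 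This single observation is the real content of the theorem; the rest of your argument is sound.
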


The proof given below is only sketched. The reader is invited to carefully repeat all the calculations in order to master it.

\begin{proof}[Sketch of proof]
  Set
  \begin{equation*}
    Q^{1}(u) := [A,\nabla_{u}] u, \quad Q^{2}(u) := (\nabla u )^{t} Au,
    \quad Q^{3}(u) := (\dive u) Au .
  \end{equation*}
  Then
  \begin{equation*}
    S_{\varphi}(v) = A_{\varphi}^{-1} \left\{ Q^{1}_{\varphi}(v) -
    Q^{2}_{\varphi}(v) - Q^{3}_{\varphi}(v) \right\},
  \end{equation*}
  and the proof reduces to establish, using the chain rule, that the mappings
  \begin{equation*}
    (\varphi,v) \mapsto Q^{i}_{\varphi}(v), \quad \text{and} \quad
    (\varphi,w) \mapsto A_{\varphi}^{-1} (w)
  \end{equation*}
  are smooth, for $i=1,2,3$.

  (a) From the expression of the first derivative of the twisted map $\varphi \mapsto A_{\varphi}$ (see lemma~\ref{lem:twisted-map-derivatives}), we deduce that
  \begin{equation*}
    Q^{1}_{\varphi}(v) = -\partial_{\varphi}A_{\varphi}(v,v)\,.
  \end{equation*}
  Therefore
  \begin{equation*}
    (\varphi,v) \mapsto  Q^{1}_{\varphi}(v), \quad \DRd{q}\times\HRd{q} \to \HRd{q-r}
  \end{equation*}
  is smooth.

  (b) We have
  \begin{equation*}
    Q^{2}_{\varphi}(v) = \left[\big( \nabla (v\circ\varphi^{-1}) \big)^{t} \circ \varphi\right] A_{\varphi}(v) = \frac{1}{J_{\varphi}} \mathrm{Com}(d\varphi) (dv)^{t} A_{\varphi}(v)\,,
  \end{equation*}
  where $\mathrm{Com}(d\varphi)$ is the matrix of cofactors of $d\varphi$. Thus, the smoothness of the mapping
  \begin{equation*}
    (\varphi,v) \mapsto  \left[\big( \nabla (v\circ\varphi^{-1}) \big)^{t} \circ \varphi\right] A_{\varphi}(v), \quad
    \DRd{q}\times\HRd{q} \to \HRd{q-r}
  \end{equation*}
  results from lemma~\ref{lem:pointwise-multiplication} and lemma~\ref{lem:inverse-Jacobian-smoothness}.

  (c) We have
  \begin{equation*}
    Q^{3}_{\varphi}(v) = \big( \dive(v\circ\varphi^{-1})\circ\varphi \big) A_{\varphi} (v) = \frac{1}{J_{\varphi}} \tr \left[dv (\mathrm{Com}(d\varphi))^{t}\right] A_{\varphi} (v)
  \end{equation*}
  and we conclude as in (b) that
  \begin{equation*}
    (\varphi,v) \mapsto  Q^{3}_{\varphi}(v), \quad \DRd{q}\times\HRd{q} \to \HRd{q-r}
  \end{equation*}
  is smooth.

  (d) The set
  \begin{equation*}
    \mathrm{Isom}(\HRd{q},\HRd{q-r})
  \end{equation*}
  is open in
  \begin{equation*}
    \mathcal{L}(\HRd{q},\HRd{q-r})
  \end{equation*}
  and the mapping
  \begin{gather*}
    P \mapsto P^{-1}, \\
    \mathrm{Isom}(\HRd{q},\HRd{q-r}) \to \mathcal{L}(\HRd{q-r},\HRd{q})
  \end{gather*}
  is smooth (even real analytic). Besides
  \begin{equation*}
    A_{\varphi} \in \mathrm{Isom}(\HRd{q},\HRd{q-r}),
  \end{equation*}
  for all $\varphi \in \DRd{q}$, and the mapping
  \begin{equation*}
    \varphi \mapsto A_{\varphi}, \quad \DRd{q} \to
    \mathrm{Isom}(\HRd{q},\HRd{q-r})
  \end{equation*}
  is smooth. Thus
  \begin{equation*}
    (\varphi,w) \mapsto  A^{-1}_{\varphi}(w), \quad  \HRd{q-r} \to \HRd{q}
  \end{equation*}
  is smooth.
\end{proof}


\subsection{The particle-trajectory method}
\label{subsec:part-traj}

The following reduction is a special case of a reformulation due to Ebin~\cite{Ebi1984}.

\begin{lemma}
  A smooth curve curve $\varphi(t)$ is a geodesic issued from $\id$ with initial velocity $u_{0}$ iff $\varphi(t)$ is an integral curve of Ebin's vector field
  \begin{equation}\label{eq:Ebin-field}
    X(\varphi) := A_{\varphi}^{-1}\left(\frac{1}{J_{\varphi}} (d\varphi^{-1})^{t} Au_{0} \right)
  \end{equation}
  defined on $\DiffRd$.
\end{lemma}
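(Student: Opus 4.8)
The plan is to show the equivalence directly by relating Ebin's vector field to the momentum-conservation form of the geodesic equation, which was recalled in the introduction as $\Ad_{g(t)}^{\top}u(t) = u_{0}$. Concretely, I would start from a smooth geodesic $\varphi(t)$ with Eulerian velocity $u(t) = R_{\varphi(t)^{-1}}\varphi_{t}(t) = \varphi_{t}\circ\varphi^{-1}$, so that $\varphi_{t} = u\circ\varphi = R_{\varphi}u$. The task is to rewrite $u\circ\varphi$ as the right-hand side of \eqref{eq:Ebin-field}. The key ingredient is the conservation of momentum for the EPDiff equation: if $u$ solves \eqref{eq:EPDiff}, then the momentum one-form density $m\,d\mu = (Au)\,d\mu$ is transported by pullback along the flow, i.e. $\varphi^{*}(m(t)\,d\mu) = m_{0}\,d\mu$, equivalently $\varphi^{*}(m(t)\otimes d\mu) = Au_{0}\otimes d\mu$. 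This is precisely the integral form \eqref{eq:momentum-conservation} written out on $\Diff(M)$, and it follows from \eqref{eq:EPDiff} by a direct computation of $\frac{d}{dt}\varphi^{*}(m\,d\mu)$ using the Lie-derivative/Cartan formula and the identity $\mathcal{L}_{u}(m\,d\mu) = (\nabla_{u}m + (\nabla u)^{t}m + (\dive u)m)\,d\mu$ — indeed this is the coordinate-free reading of the EPDiff equation itself.

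Next I would unwind the pullback on $\Rd$ in coordinates. The pullback of the vector-density $m\otimes d\mu$ by $\varphi$ acts on covectors; concretely, writing $m$ as a (co)vector field, the statement $\varphi^{*}(m\circ\varphi^{-1}\text{-stuff})$ becomes, after tracking the Jacobian factors, the relation
\begin{equation*}
  (d\varphi)^{t}\,\big((Au)\circ\varphi\big)\,J_{\varphi} = Au_{0},
\end{equation*}
i.e. $(Au)\circ\varphi = \frac{1}{J_{\varphi}}\,\big((d\varphi)^{t}\big)^{-1}Au_{0} = \frac{1}{J_{\varphi}}\,(d\varphi^{-1}\circ\varphi)^{t}\,Au_{0}$. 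Here I use that $(d\varphi)^{-1} = d(\varphi^{-1})\circ\varphi$ and the notational convention $(d\varphi^{-1})^{t}$ in \eqref{eq:Ebin-field} means $\big(d(\varphi^{-1})\big)^{t}\circ\varphi$ precomposed appropriately so the expression lives as a function on the source — this is the same bookkeeping already used in steps (b) and (c) of the proof of Theorem~\ref{thm:smoothness-spray}, where $(\nabla(v\circ\varphi^{-1}))^{t}\circ\varphi = \frac{1}{J_{\varphi}}\mathrm{Com}(d\varphi)(dv)^{t}$. Applying $R_{\varphi}\circ A^{-1}\circ R_{\varphi^{-1}} = A_{\varphi}^{-1}$ to both sides of $m\circ\varphi = \frac{1}{J_{\varphi}}(d\varphi^{-1})^{t}Au_{0}$ gives $u\circ\varphi = A_{\varphi}^{-1}\!\big(\frac{1}{J_{\varphi}}(d\varphi^{-1})^{t}Au_{0}\big) = X(\varphi)$, so $\varphi_{t} = X(\varphi)$, which is the forward direction.

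For the converse, I would run the argument backwards: given a smooth integral curve $\varphi(t)$ of $X$ with $\varphi(0) = \id$, define $u(t) := \varphi_{t}\circ\varphi^{-1}$ (equivalently $u = A_{\varphi}^{-1}R_{\varphi}\big(\frac{1}{J_{\varphi}}(d\varphi^{-1})^{t}Au_{0}\big)$ so $u$ is well-defined and smooth in $t$), and check that $u$ solves \eqref{eq:EPDiff} with $u(0) = u_{0}$. From $\varphi_{t} = X(\varphi)$ we recover $(Au)\circ\varphi\cdot J_{\varphi} = (d\varphi)^{t}Au_{0}$, i.e. $\varphi^{*}(m\,d\mu) = m_{0}\,d\mu$ is constant in $t$; differentiating in $t$ yields $\varphi^{*}\big(\partial_{t}(m\,d\mu) + \mathcal{L}_{u}(m\,d\mu)\big) = 0$, hence $m_{t} + \nabla_{u}m + (\nabla u)^{t}m + (\dive u)m = 0$, which is exactly \eqref{eq:EPDiff}; and $u(0) = u_{0}$ is immediate since $\varphi(0)=\id$ forces $J_{\varphi(0)}=1$ and $d\varphi^{-1}(0) = \id$. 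Then $\varphi(t)$ is the flow of the EPDiff solution $u(t)$, hence a geodesic issued from $\id$ with initial velocity $u_{0}$ by the correspondence of Subsection~\ref{subsec:spray}.

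I expect the only genuine subtlety to be the careful bookkeeping of the Jacobian and transpose-Jacobian factors when translating the invariant statement $\varphi^{*}(m\otimes d\mu) = m_{0}\otimes d\mu$ into the explicit formula \eqref{eq:Ebin-field} — in particular making sure that $(d\varphi^{-1})^{t}$ in the stated formula is interpreted as a field on the source of $\varphi$ and that it pairs with $Au_{0}$ correctly — together with the verification that $\mathcal{L}_{u}(m\,d\mu)$ really equals $(\nabla_{u}m + (\nabla u)^{t}m + (\dive u)m)\,d\mu$ so that the conservation law and \eqref{eq:EPDiff} are literally the same identity. Everything else is a direct, if slightly tedious, application of the chain rule and the Lie-derivative formalism; no analytic input beyond the smooth category is needed for this lemma, since the regularity questions are precisely what Sections~\ref{sec:twisted-map} and~\ref{sec:local-well-posedness} address afterwards.
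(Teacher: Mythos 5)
Your proposal is correct and follows essentially the same route as the paper: specialize the momentum conservation law \eqref{eq:momentum-conservation} to $\DiffRd$ to get $J_{\varphi}(d\varphi)^{t}\!\left((Au)\circ\varphi\right) = Au_{0}$, then invert the Jacobian factors and apply $A_{\varphi}^{-1}$ to obtain $\varphi_{t} = X(\varphi)$. You additionally spell out the converse direction and the Lie-derivative identification of \eqref{eq:EPDiff} with the conservation law, which the paper's sketch leaves implicit but which is needed for the ``iff''.
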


\begin{proof}[Sketch of proof]
  Let $\varphi(t)$ be a geodesic issued from $\id$ with initial velocity $u_{0}$ and let $u(t) = \varphi_{t} \circ \varphi^{-1}$ be the Eulerian velocity. Then, $u(t)$ is a solution of the Euler--Arnold equation~\eqref{eq:Diff-Euler-Arnold}. Moreover, the general conservation law~\eqref{eq:momentum-conservation} specialized for $\DiffRd$ may be written as
  \begin{equation*}
    J_{\varphi}(d\varphi)^{t}. (m \circ \varphi) = m_{0},
  \end{equation*}
  where $m = Au$. We get thus
  \begin{equation*}
    \varphi_{t} = u \circ \varphi = A_{\varphi}^{-1}\left(\frac{1}{J_{\varphi}} (d\varphi^{-1})^{t} Au_{0} \right),
  \end{equation*}
  where $A_{\varphi} := R_{\varphi}\circ A\circ R_{\varphi^{-1}}$.
\end{proof}

We will now establish that the extension and smoothness of the vector field $X$ on the Hilbert manifold $\DRd{q}$ reduces to the extension and smoothness of the twisted map
\begin{equation*}
  \varphi \mapsto R_{\varphi}\circ A\circ R_{\varphi^{-1}}, \qquad \DRd{q} \to \mathcal{L}(\HRd{q},\HRd{q-r}).
\end{equation*}

thm:smoothness-spray\begin{theorem}\label{thm:smoothness-X}
  Let $r \ge 1$ and $q > 1 + d/2$, with $q \ge r$. Suppose that $A$ extends to $\mathrm{Isom}(\HRd{q},\HRd{q-r})$, and moreover that
  \begin{equation*}
    \varphi \mapsto A_{\varphi} = R_{\varphi} \circ A \circ R_{\varphi^{-1}}, \qquad \DRd{q} \to \mathcal{L}(\HRd{q},\HRd{q-r})
  \end{equation*}
  is smooth. Then the vector field
  \begin{equation*}
    \varphi \mapsto X(\varphi) := A_{\varphi}^{-1}\left(\frac{1}{J_{\varphi}} (d\varphi^{-1})^{t} Au_{0} \right)
  \end{equation*}
  extends smoothly to $\DRd{q}$ for each $u_{0} \in \HRd{q}$.
\end{theorem}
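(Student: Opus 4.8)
The plan is to follow the structure of the proof of Theorem~\ref{thm:smoothness-spray}: write $X(\varphi) = A_\varphi^{-1}(w(\varphi))$ with a suitable ``source term'' $w(\varphi)$, establish that $\varphi \mapsto w(\varphi)$ is smooth into $\HRd{q-r}$, that $\varphi \mapsto A_\varphi^{-1}$ is smooth into $\mathcal{L}(\HRd{q-r},\HRd{q})$, and conclude by composing with the evaluation map $\mathcal{L}(\HRd{q-r},\HRd{q}) \times \HRd{q-r} \to \HRd{q}$, which is bounded bilinear, hence smooth.

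First I would rewrite the source term so that the lemmas of Section~\ref{sec:framework} apply. Using the adjugate identity $(d\varphi)^{-1} = J_\varphi^{-1}\,\mathrm{Com}(d\varphi)^{t}$, one has
\begin{equation*}
  w(\varphi) := \frac{1}{J_\varphi}(d\varphi^{-1})^{t}\,Au_{0} = \frac{1}{J_\varphi^{2}}\,\mathrm{Com}(d\varphi)\,Au_{0},
\end{equation*}
where $Au_{0} \in \HRd{q-r}$ is a fixed vector, since $u_{0}\in\HRd{q}$ and $A$ maps $\HRd{q}$ into $\HRd{q-r}$. The entries of $\mathrm{Com}(d\varphi)$ are polynomials in the entries of $d\varphi$, which lie in $\HR{q-1}$; as $q > 1 + d/2$, the space $\HR{q-1}$ is a multiplicative algebra by Lemma~\ref{lem:pointwise-multiplication}, so $\varphi \mapsto \mathrm{Com}(d\varphi)$ is a polynomial, hence smooth, map from $\DRd{q}$ into the matrices over $\HR{q-1}$. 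Since $r \ge 1$ gives $q-r \le q-1$, Lemma~\ref{lem:pointwise-multiplication} also provides a bounded bilinear multiplication $\HR{q-1}\times\HRd{q-r}\to\HRd{q-r}$, so $\varphi\mapsto\mathrm{Com}(d\varphi)\,Au_{0}$ is smooth into $\HRd{q-r}$. Finally, since $0 \le q-r \le q$ (using $q\ge r$), Lemma~\ref{lem:inverse-Jacobian-smoothness} shows that $(\varphi,f)\mapsto f/J_\varphi$ is smooth on $\DRd{q}\times\HRd{q-r}$, and applying it twice yields smoothness of $\varphi\mapsto w(\varphi)$, $\DRd{q}\to\HRd{q-r}$.

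For the inverse of the twisted map I would argue exactly as in part (d) of the proof of Theorem~\ref{thm:smoothness-spray}: $\mathrm{Isom}(\HRd{q},\HRd{q-r})$ is open in $\mathcal{L}(\HRd{q},\HRd{q-r})$, inversion $P\mapsto P^{-1}$ is real analytic on it, and by hypothesis $\varphi\mapsto A_\varphi$ is smooth with values in $\mathrm{Isom}(\HRd{q},\HRd{q-r})$; composing, $\varphi\mapsto A_\varphi^{-1}$ is smooth into $\mathcal{L}(\HRd{q-r},\HRd{q})$. Composing $\varphi\mapsto(A_\varphi^{-1},w(\varphi))$ with the evaluation map then shows that $X$ extends smoothly to $\DRd{q}$ for each $u_{0}\in\HRd{q}$; the torus case is identical, every lemma invoked being valid on $\Td$ as well.

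There is no genuine obstacle here once the smoothness of the twisted map is granted; the only point requiring care is the bookkeeping of Sobolev indices — one needs $q-1 > d/2$ (so that $\HR{q-1}$ is an algebra), $q-r \le q-1$ (so that multiplication by the cofactor entries preserves $\HRd{q-r}$, which is exactly where $r\ge 1$ enters), and $0\le q-r\le q$ (so that Lemma~\ref{lem:inverse-Jacobian-smoothness} applies, which is where $q\ge r$ enters). Otherwise the argument is a transcription of the relevant steps of Theorem~\ref{thm:smoothness-spray}.
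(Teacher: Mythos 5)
Your proposal is correct and follows essentially the same route as the paper: the same rewriting of the source term as $J_{\varphi}^{-2}\,\mathrm{Com}(d\varphi)\,Au_{0}$, smoothness of that term via Lemmas~\ref{lem:pointwise-multiplication} and~\ref{lem:inverse-Jacobian-smoothness}, and then smoothness of $\varphi\mapsto A_{\varphi}^{-1}$. The only (immaterial) difference is at the last step, where you invoke the openness of $\mathrm{Isom}$ and the analyticity of inversion directly, as in part~(d) of the proof of Theorem~\ref{thm:smoothness-spray}, whereas the paper routes the same fact through Corollary~\ref{cor:smoothness-equivalence}.
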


\begin{proof}
  Note first that we have
  \begin{equation*}
    \frac{1}{J_{\varphi}} (d\varphi^{-1})^{t} Au_{0} = \frac{1}{J_{\varphi}^{2}} \mathrm{Com}(d\varphi) Au_{0}
  \end{equation*}
  and the smoothness of the mapping
  \begin{equation*}
    \varphi \mapsto \frac{1}{J_{\varphi}^{2}} \mathrm{Com}(d\varphi) Au_{0}, \qquad \DRd{q} \to \HRd{q-r}
  \end{equation*}
  results from lemma~\ref{lem:pointwise-multiplication} and lemma~\ref{lem:inverse-Jacobian-smoothness}. Thus the problem of the smoothness of the vector field
  \begin{equation*}
    X(\varphi) = \varphi \mapsto A_{\varphi}^{-1}\left(\frac{1}{J_{\varphi}^{2}} \mathrm{Com}(d\varphi) Au_{0}\right)
  \end{equation*}
  reduces to show that the mapping
  \begin{equation*}
    (\varphi,v) \mapsto A^{-1}_{\varphi}(v), \qquad \DRd{q}\times\HRd{q-r} \to \HRd{q}
  \end{equation*}
  is smooth. Now, by corollary~\ref{cor:smoothness-equivalence}, this last assertion is equivalent to show the smoothness of the mapping
  \begin{equation*}
    \varphi \mapsto A^{-1}_{\varphi}, \qquad \DRd{q} \to \mathcal{L}(\HRd{q-r},\HRd{q})\,,
  \end{equation*}
  and since $A_{\varphi}^{-1} \in \mathrm{Isom}(\HRd{q-r},\HRd{q})$, this is equivalent to show the smoothness of
  \begin{equation*}
    \varphi \mapsto A_{\varphi}, \qquad \DRd{q} \to \mathcal{L}(\HRd{q},\HRd{q-r})\,.
  \end{equation*}
\end{proof}


\section{Smoothness of the twisted map}
\label{sec:twisted-map}

Let $A$ be a continuous linear operator from $\HRd{\infty}$ to itself (a similar discussion holds for $\HTd{\infty}$) and consider the twisted map
\begin{equation*}
  A_{\varphi} := R_{\varphi}A R_{\varphi^{-1}},
\end{equation*}
where $R_{\varphi}v = v \circ \varphi$ and $\varphi \in \DiffRd$. Since $\DiffRd$ is a Fr\'{e}chet Lie group with Lie algebra $\HRd{\infty}$, the mapping
\begin{equation}\label{eq:twisted_map}
  (\varphi,v) \mapsto A_{\varphi}v, \qquad \DiffRd \times \HRd{\infty} \to \HRd{\infty}
\end{equation}
is smooth. The aim of this section is to prove that the twisted map
\begin{equation*}
  \varphi \mapsto A_{\varphi} := R_{\varphi}A R_{\varphi^{-1}},
\end{equation*}
extends smoothly from $\DRd{q}$ to $\mathcal{L}(\HRd{q},\HRd{q-r})$ for $q > 1 + d/2$ and $q \ge r$, when $A$ is a Fourier multiplier of class $S^{r}$ with $r \ge 1$.

Note that the problem is not trivial, because even if $A$ extends to a bounded linear map in $\mathcal{L}(\HRd{q},\HRd{q-r})$, the mapping
\begin{equation*}
  (\varphi,v) \mapsto R_{\varphi} (v), \quad \DRd{q}\times\HRd{q} \to \HRd{q}
\end{equation*}
is \emph{not differentiable} (see \cite{EM1970,Mic2006} for instance), and the mapping $\varphi \mapsto R_{\varphi}$ is not even continuous (see~\cite[Remark B.4]{EK2014}).

\begin{remark}
  When $A$ is a differential operator of order $r$, $A_{\varphi}$ is a linear differential operator whose coefficients are polynomial expressions of $1/J_{\varphi}$ and the derivatives of $\varphi$ up to order $r$ (see~\cite{EM1970,EK2011} for instance). For example, in one dimension, $D_{\varphi} = (1/\varphi_x)D$, where $D := d/dx$. In that case, $\varphi \mapsto A_{\varphi}$ is smooth (in fact real analytic) in the Hilbert setting. But this observation is no longer true for a general Fourier multiplier of class $S^{r}$ and more work is required.
\end{remark}

The strategy is the following. Given a continuous linear operator
\begin{equation*}
  A : \HRd{\infty} \to \HRd{\infty},
\end{equation*}
the expressions of the G\^{a}teaux derivatives of the twisted map $A_{\varphi}$ can be computed in the Fr\'{e}chet setting: they are given by the multilinear operators $A_{n,\varphi}$ (lemma~\ref{lem:twisted-map-derivatives}). Then it is shown that the mapping $\varphi \mapsto A_{\varphi}$ is smooth in the Hilbert setting if and only if the $A_{n} := A_{n,\id}$ are bounded for the Hilbert norm (lemma~\ref{lem:smoothness-lemma}). Finally, it is established that if $A$ is a Fourier multiplier in the class $\mathcal{S}^{r}$, then the $A_{n}$ are bounded and so $A_{\varphi}$ is smooth in the Hilbert setting (theorem~\ref{thm:twisted-map-smoothness}). The proof of this theorem uses a crucial commutator estimate (lemma~\ref{lem:boundedness-lemma}) which can be considered as a multilinear generalization of the Kato--Ponce estimate~\cite{KP1988}. The details below are given in the non-compact case $\Rd$ but apply also to the compact case of the torus $\Td$ with slight modifications which will be notified.

The first step is to obtain an explicit formula for the $n$-th partial G\^{a}teaux derivative of the twisted map~\eqref{eq:twisted_map} in the Fr\'{e}chet setting. The calculation was done in~\cite[Lemma 3.2]{EK2014} and will not be repeated here.

\begin{lemma}\label{lem:twisted-map-derivatives}
  Let $A$ be a continuous linear operator from $\HRd{\infty}$ to itself. Then
  \begin{equation*}
    \partial^{n}_{\varphi}A_{\varphi} (v,\delta\varphi_{1}, \dotsc ,\delta\varphi_{n}) = R_{\varphi}A_{n}R_{\varphi}^{-1}(v,\delta\varphi_{1}, \dotsc ,\delta\varphi_{n}),
  \end{equation*}
  where
  \begin{equation*}
    A_{n}:= \partial^{n}_{\id}A_{\varphi} \in \mathcal{L}^{n+1}(\HRd{\infty},\HRd{\infty})
  \end{equation*}
  is the $(n+1)$-linear operator defined inductively by $A_{0} = A$ and
  \begin{multline}\label{eq:recurrence-relation}
    A_{n+1}(u_{0},u_{1}, \dotsc , u_{n+1}) = \nabla_{u_{n+1}} \left(A_{n}(u_{0}, u_{1}, \dotsc , u_{n}) \right)\\
    - \sum_{k=0}^{n} A_{n}(u_{0}, u_{1}, \dotsc ,\nabla_{u_{n+1}} u_{k}, \dotsc , u_{n}),
  \end{multline}
  where $\nabla$ is the canonical covariant derivative on $\RR^{d}$.
\end{lemma}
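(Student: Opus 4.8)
The plan is to prove the formula by induction on $n$, reducing at every step to a differentiation at the identity via the (semi)group structure of $\DiffRd$. Throughout, differentiation is taken in the Fréchet sense: since $\DiffRd$ is a regular Fréchet Lie group on which composition and inversion are smooth, the map $(\varphi,v)\mapsto A_\varphi v$ of~\eqref{eq:twisted_map} is smooth and all its iterated partial Gâteaux derivatives exist and are continuous multilinear operators; only their \emph{shape} has to be identified.

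First I would record the two elementary derivatives. Given a direction $\delta\varphi$, pick a path $\varphi_t$ with $\varphi_0=\id$ and $\dot\varphi_0=\delta\varphi$; then $\frac{d}{dt}\big|_{0}(f\circ\varphi_t)=df\cdot\delta\varphi$, and differentiating the relation $\varphi_t\circ\varphi_t^{-1}=\id$ gives $\frac{d}{dt}\big|_{0}\varphi_t^{-1}=-\delta\varphi$, hence $\frac{d}{dt}\big|_{0}(u\circ\varphi_t^{-1})=-du\cdot\delta\varphi$. In terms of the canonical covariant derivative $\nabla$ on $\Rd$ these read $\partial_\varphi(R_\varphi f)(\delta\varphi)\big|_{\id}=\nabla_{\delta\varphi}f$ and $\partial_\varphi(R_{\varphi^{-1}}u)(\delta\varphi)\big|_{\id}=-\nabla_{\delta\varphi}u$. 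Next comes the reduction to $\id$: from $R_{\psi\circ\varphi_0}=R_{\varphi_0}\circ R_\psi$ and $(\psi\circ\varphi_0)^{-1}=\varphi_0^{-1}\circ\psi^{-1}$ one gets, for any continuous (multi)linear operator $B$ on $\HRd{\infty}$, the cocycle identity $R_{\psi\circ\varphi_0}\,B\,R_{(\psi\circ\varphi_0)^{-1}}=R_{\varphi_0}\big(R_\psi\,B\,R_{\psi^{-1}}\big)R_{\varphi_0^{-1}}$. Writing an arbitrary curve through $\varphi_0$ as $\varphi_t=\psi_t\circ\varphi_0$ with $\psi_0=\id$ and $\dot\psi_0=\delta\varphi\circ\varphi_0^{-1}$, this propagates a derivative computed at $\id$ to its value at $\varphi_0$ by conjugation with $R_{\varphi_0}$; thus it suffices to identify $\partial^{n}_\varphi A_\varphi\big|_{\id}$, and this ``conjugated'' form of the $n$-th derivative is exactly what is needed to start the computation of the $(n+1)$-st.

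Then I would run the induction. The case $n=0$ is the definition, with $A_0=A$. For the step, assume $\partial^{n}_\varphi A_\varphi=R_\varphi A_n R_\varphi^{-1}$ with $A_n$ a continuous $(n+1)$-linear map on $\HRd{\infty}$. By the reduction it suffices to differentiate $\psi\mapsto R_\psi A_n R_{\psi^{-1}}$ at $\id$ in a new direction $\delta\varphi_{n+1}$. Apply the product rule via the diagonal trick — freeze the outer $R_\psi$ and the $n+1$ inner copies of $R_{\psi^{-1}}$ into separate variables, differentiate each factor with the two elementary formulas above, and use multilinearity of $A_n$. Writing $u_0:=v$ and $u_k:=\delta\varphi_k$, the outcome is
\[
\nabla_{u_{n+1}}\!\big(A_n(u_0,\dots,u_n)\big)-\sum_{k=0}^{n} A_n(u_0,\dots,\nabla_{u_{n+1}}u_k,\dots,u_n),
\]
which is precisely $A_{n+1}(u_0,\dots,u_{n+1})$ as defined by the recurrence~\eqref{eq:recurrence-relation}. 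That $A_{n+1}$ is again a continuous $(n+2)$-linear map $\HRd{\infty}\to\HRd{\infty}$ is immediate, since $\nabla$ and $A$ preserve $\HRd{\infty}$ and pointwise products of $\HRd{\infty}$-functions stay in $\HRd{\infty}$; the conjugation step then upgrades this to $\partial^{n+1}_\varphi A_\varphi=R_\varphi A_{n+1} R_\varphi^{-1}$, closing the induction.

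The main obstacle is purely organisational rather than conceptual: one must justify the chain and product rules through composition and inversion in the Fréchet category (this is where regularity of the Fréchet Lie group is used), handle the inversion derivative with the right sign, and set up the diagonal argument so that exactly the $n+2$ expected terms survive and no others. Once this bookkeeping is in place the remaining computation is mechanical, which is why the detailed version is left to~\cite{EK2014}.
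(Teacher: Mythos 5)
Your argument is correct and follows the same route as the reference proof the paper cites ([EK2014, Lemma 3.2]): induction on $n$, reduction to the identity via the cocycle identity $R_{\psi\circ\varphi_{0}}BR_{(\psi\circ\varphi_{0})^{-1}}=R_{\varphi_{0}}\bigl(R_{\psi}BR_{\psi^{-1}}\bigr)R_{\varphi_{0}^{-1}}$, and the product rule applied with the two elementary derivatives $\partial_{\id}(R_{\varphi}f)(\delta\varphi)=\nabla_{\delta\varphi}f$ and $\partial_{\id}(R_{\varphi^{-1}}u)(\delta\varphi)=-\nabla_{\delta\varphi}u$, which produces exactly the recurrence \eqref{eq:recurrence-relation}. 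The one point worth making explicit in a written version is that the new direction slot also transforms by $\delta\varphi_{n+1}\mapsto\delta\varphi_{n+1}\circ\varphi_{0}^{-1}$ (since $\dot\psi_{0}=\delta\varphi_{n+1}\circ\varphi_{0}^{-1}$), which is precisely what makes the conjugated form $R_{\varphi}A_{n+1}R_{\varphi}^{-1}$ come out in all $n+2$ arguments.
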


It may be useful to rather think of $A_{n}$ as a $n$-linear mapping
\begin{equation*}
  \HRd{\infty} \times \dotsb \times \HRd{\infty} \to \mathcal{L}(\HRd{\infty},\HRd{\infty})
\end{equation*}
and write
\begin{equation*}
  A_{n}(u_{0},u_{1}, \dotsc , u_{n}) = A_{n}(u_{1}, \dotsc , u_{n})u_{0}.
\end{equation*}
The recurrence relation~\eqref{eq:recurrence-relation} rewrites then accordingly as:
\begin{multline}\label{eq:def-Rec}
  \Rec (A_{n})(u_{1}, \dotsc , u_{n+1}) := [\nabla_{u_{n+1}}, A_{n}(u_{1}, \dotsc , u_{n})]\\
  - \sum_{k=1}^{n} A_{n}(u_{1}, \dotsc ,\nabla_{u_{n+1}} u_{k}, \dotsc , u_{n}).
\end{multline}

\begin{example}\label{ex:A1-A2}
  For $n=1$, we have
  \begin{equation*}
    A_{1}(u_{1}) = [\nabla_{u_{1}},A],
  \end{equation*}
  and for $n=2$, we get
  \begin{equation*}
    A_{2}(u_{1},u_{2}) = [\nabla_{u_{2}},[\nabla_{u_{1}},A]] - [\nabla_{\nabla_{u_{2}}u_{1}},A].
  \end{equation*}
\end{example}

\begin{remark}
  When $d=1$ and $A$ commutes with $D := d/dx$, the following nice formula for $A_{n}$ was obtained in~\cite{Cis2015}:
  \begin{equation}\label{eq:Cismas-formula}
    A_{n}(u_{1}, \dotsc , u_{n}) = [u_{1},[u_{2},[ \dotsc [ u_{n}, D^{n-1}A]\dotsc]]]D, \qquad n \ge 1 \,.
  \end{equation}
\end{remark}

The next lemma states that the smoothness of $A_{\varphi}$ in the Hilbert setting reduces to the boundedness of the $A_{n}$. It is a slight extension of~\cite[Theorem 3.4]{EK2014} and we redirect to this reference for a full proof.

\begin{lemma}[Smoothness Lemma]\label{lem:smoothness-lemma}
  Let
  \begin{equation*}
    A : \HRd{\infty} \to \HRd{\infty}
  \end{equation*}
  be a continuous linear operator. Given $q > 1 + d/2$ and $0 \le p_{1}, p_{2} \le q$, suppose that $A$ extends to a bounded operator from $\HRd{p_{1}}$ to $\HRd{p_{2}}$. Then
  \begin{equation*}
    \varphi \mapsto A_{\varphi}:= R_{\varphi} \circ A \circ R_{\varphi^{-1}} , \quad \DRd{q} \to \mathcal{L}(\HRd{p_{1}},\HRd{p_{2}})
  \end{equation*}
  is smooth, if and only if, each operator $A_{n}$ defined by \eqref{eq:def-Rec}, extends to a bounded $n$-linear operator in $\mathcal{L}^{n}(\HRd{q}, \mathcal{L}(\HRd{p_{1}},\HRd{p_{2}}))$.
\end{lemma}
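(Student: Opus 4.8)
The plan is to exploit the explicit formula for the Gâteaux derivatives of $A_{\varphi}$ furnished by Lemma~\ref{lem:twisted-map-derivatives}, namely $\partial^{n}_{\varphi}A_{\varphi}(v,\delta\varphi_{1},\dotsc,\delta\varphi_{n}) = R_{\varphi}A_{n}R_{\varphi^{-1}}(v,\delta\varphi_{1},\dotsc,\delta\varphi_{n})$, and to transfer smoothness back and forth between the Fréchet and Hilbert pictures. First I would treat the ``only if'' direction, which is the easy one: if $\varphi \mapsto A_{\varphi}$ is smooth as a map $\DRd{q}\to\mathcal{L}(\HRd{p_1},\HRd{p_2})$, then in particular it is $C^{n}$, so its $n$-th derivative at $\id$ exists and is a bounded $n$-linear map $\HRd{q}\times\dotsb\times\HRd{q}\to\mathcal{L}(\HRd{p_1},\HRd{p_2})$; but that derivative, computed in the Fréchet category and hence agreeing with the Hilbert one by density of $\HRd{\infty}$ in $\HRd{q}$, is exactly $A_{n}$ (restricted to $\id$). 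So boundedness of $A_{n}$ in $\mathcal{L}^{n}(\HRd{q},\mathcal{L}(\HRd{p_1},\HRd{p_2}))$ is immediate.

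For the ``if'' direction, I would argue by induction on the order of differentiability. Assume each $A_{n}$ extends to a bounded $n$-linear operator in $\mathcal{L}^{n}(\HRd{q},\mathcal{L}(\HRd{p_1},\HRd{p_2}))$. The candidate for the $n$-th derivative of $\varphi\mapsto A_{\varphi}$ at a point $\varphi\in\DRd{q}$ is the multilinear map $(\delta\varphi_{1},\dotsc,\delta\varphi_{n})\mapsto R_{\varphi}\circ A_{n}(R_{\varphi^{-1}}\delta\varphi_{1},\dotsc,R_{\varphi^{-1}}\delta\varphi_{n})\circ R_{\varphi^{-1}}$, regarded as an element of $\mathcal{L}^{n}(\HRd{q},\mathcal{L}(\HRd{p_1},\HRd{p_2}))$. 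The key point is that although $R_{\varphi}$ and $R_{\varphi^{-1}}$ are individually only continuous (not differentiable) on the relevant Sobolev spaces, the composite $R_{\varphi}\circ A_{n}(\dotsc)\circ R_{\varphi^{-1}}$ depends smoothly on $\varphi$: expanding $A_{n}$ via its definition \eqref{eq:def-Rec}, one sees that each term is a composition of the fixed operator $A$ with covariant derivatives $\nabla$ interlaced with $R_{\varphi},R_{\varphi^{-1}}$, and the ``bad'' factors $R_{\varphi^{-1}}$ always come paired so as to cancel the derivatives falling on $\varphi^{-1}$; concretely, conjugating $\nabla_{u_k}$ by $R_{\varphi}$ produces factors of $d\varphi$ and $1/J_{\varphi}$ (via $\mathrm{Com}(d\varphi)/J_\varphi$), whose smooth dependence on $\varphi$ is governed by Lemma~\ref{lem:pointwise-multiplication} and Lemma~\ref{lem:inverse-Jacobian-smoothness}. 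Boundedness of the resulting $n$-linear map follows from Lemma~\ref{lem:composition} (local boundedness of $R_{\varphi}$ on $\HRd{p}$-spaces) together with the assumed boundedness of $A_{n}$.

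To close the induction one must verify that this candidate really is the derivative: that is, that the difference quotient $\tfrac{1}{t}\big(A_{\varphi+t\delta\varphi}-A_{\varphi}\big)$ converges in $\mathcal{L}(\HRd{p_1},\HRd{p_2})$ to $\partial_{\varphi}A_{\varphi}(\delta\varphi)=R_{\varphi}A_{1}(R_{\varphi^{-1}}\delta\varphi)R_{\varphi^{-1}}$, and iterate. I would obtain this by writing $A_{\varphi} - A_{\psi}$ as an integral of the already-controlled first derivative along the segment joining $\psi$ to $\varphi$ — a fundamental-theorem-of-calculus argument valid because the first derivative, by the paragraph above, is continuous in $\varphi$ as a map into $\mathcal{L}(\HRd{q},\mathcal{L}(\HRd{p_1},\HRd{p_2}))$ — and then bootstrapping: once $\varphi\mapsto A_{\varphi}$ is $C^{1}$ with derivative $\varphi\mapsto[\delta\varphi\mapsto R_{\varphi}A_{1}(R_{\varphi^{-1}}\delta\varphi)R_{\varphi^{-1}}]$, the same analysis applied to this derivative (whose $\varphi$-dependence is structurally of the same type, now built from $A_{1}$ and hence ultimately from all the $A_{n}$) shows it is again $C^{1}$, giving $C^{2}$, and so on. The main obstacle, and the place where all the subtlety of the lemma lives, is precisely this point: establishing that the conjugated multilinear expressions $R_{\varphi}\circ A_{n}\circ R_{\varphi^{-1}}$ are genuinely differentiable in $\varphi$ despite $R_{\varphi}$ not being — one must see that every differentiation of $R_{\varphi^{-1}}$, which formally produces an unbounded factor, is exactly compensated by a matching term coming from $\Rec(A_{n-1})$, and that this is what the recurrence \eqref{eq:def-Rec} encodes. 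Once that structural cancellation is in hand, the estimates reduce to the multiplication and Jacobian lemmas and the whole induction runs. The details are carried out in~\cite[Theorem 3.4]{EK2014}.
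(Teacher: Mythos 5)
Your ``only if'' direction and the overall architecture of your ``if'' direction (candidate derivative $A_{n,\varphi}=R_{\varphi}A_{n}R_{\varphi^{-1}}$, a fundamental-theorem-of-calculus identity, bootstrap to higher order) do match the paper. But the step on which everything hinges --- the continuity, let alone differentiability, of $\varphi\mapsto A_{n,\varphi}$ --- is justified in your second paragraph by an argument that is circular. You claim that conjugating the terms of $A_{n}$ by $R_{\varphi}$ reduces everything to factors of $d\varphi$ and $1/J_{\varphi}$ controlled by Lemmas~\ref{lem:pointwise-multiplication} and~\ref{lem:inverse-Jacobian-smoothness}. That is true for the conjugated covariant derivatives $\nabla^{\varphi}_{v}:=R_{\varphi}\nabla_{R_{\varphi^{-1}}v}R_{\varphi^{-1}}$, but after inserting $R_{\varphi^{-1}}R_{\varphi}=\id$ between the factors, every term of $A_{n,\varphi}$ still contains the block $R_{\varphi}\circ A\circ R_{\varphi^{-1}}=A_{\varphi}$ itself (for instance $A_{1,\varphi}(v)=[\nabla^{\varphi}_{v},A_{\varphi}]$), and the smooth dependence of that block on $\varphi$ is precisely the statement to be proved. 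For a non-local $A$ this block cannot be rewritten in terms of $d\varphi$ and $1/J_{\varphi}$; this is exactly the point of the remark in Section~\ref{sec:twisted-map} contrasting differential operators (where your mechanism works and $\varphi\mapsto A_{\varphi}$ is even real analytic) with general Fourier multipliers. So the continuity you invoke before applying the FTC argument has not actually been established.

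The paper's proof avoids this trap by never arguing structurally for differentiability of $A_{n,\varphi}$. It climbs a ladder in which each rung needs only the rung above to be \emph{locally bounded}: boundedness of $A_{n+1}$ in $\mathcal{L}^{n+1}(\HRd{q},\mathcal{L}(\HRd{p_{1}},\HRd{p_{2}}))$ together with Lemma~\ref{lem:composition} (local boundedness of $R_{\varphi}$ and $R_{\varphi^{-1}}$ in operator norm, which requires no differentiability of $\varphi\mapsto R_{\varphi}$) gives local boundedness of $\varphi\mapsto A_{n+1,\varphi}$; the mean-value identity $A_{n,\varphi_{1}}-A_{n,\varphi_{0}}=\int_{0}^{1}A_{n+1,\varphi(t)}(\cdot,\varphi_{1}-\varphi_{0})\,dt$, established first for smooth diffeomorphisms and smooth arguments (where Lemma~\ref{lem:twisted-map-derivatives} applies) and then extended by density of $\HRd{\infty}$, shows that $A_{n,\varphi}$ is locally Lipschitz, hence continuous; and Lemma~\ref{lem:mean_value_criteria} applied one level down then yields that $A_{n-1,\varphi}$ is $C^{1}$ with derivative $A_{n,\varphi}$. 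Replacing your ``structural cancellation'' paragraph by this local-boundedness/Lipschitz/mean-value chain closes the gap; the rest of your outline then goes through as written.
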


The idea of the proof of lemma~\ref{lem:smoothness-lemma}, which is inductive, is the following. First, we show that if $A_{n}$ is bounded, then the mapping
\begin{equation*}
  \varphi \mapsto A_{n,\varphi} := R_{\varphi}A_{n}R_{\varphi^{-1}}, \quad \DRd{q} \to \mathcal{L}^{n+1}(\HRd{p_{1}},\HRd{p_{2}})
\end{equation*}
is locally bounded. Then, we prove that if $A_{n+1,\varphi}$ is locally bounded, then $A_{n,\varphi}$ is locally Lipschitz. Finally we show that if $A_{n+1,\varphi}$ is continuous, then $A_{n,\varphi}$ is $C^{1}$ and its derivative is $A_{n+1,\varphi}$. The proof, given below, uses the following two elementary lemmas, which will be stated without proof.

\begin{lemma}\label{lem:path_integral_continuity}
  Let $X$ be a topological space and $E$ a Banach space. Let $f: [0,1] \times X\to E$ be a continuous mapping. Then,
  \begin{equation*}
    g(x):= \int_{0}^{1} f(t,x)\, dt
  \end{equation*}
  is continuous.
\end{lemma}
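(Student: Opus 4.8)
The statement to prove is Lemma~\ref{lem:path_integral_continuity}: if $f \colon [0,1] \times X \to E$ is continuous, with $X$ a topological space and $E$ a Banach space, then $g(x) := \int_0^1 f(t,x)\,dt$ is continuous. The plan is to fix $x_0 \in X$ and show $g$ is continuous there by a uniform-continuity-on-the-compact-interval argument combined with the standard estimate $\norm{\int_0^1 h(t)\,dt}_E \le \int_0^1 \norm{h(t)}_E \,dt$ for Banach-valued Riemann (or Bochner) integrals.

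\textbf{Key steps.} First I would note that since $[0,1]$ is compact and $t \mapsto f(t,x_0)$ is continuous, the integral defining $g(x_0)$ makes sense as a Banach-space-valued integral; the same holds for every $x$, so $g$ is well-defined. Next, fix $x_0$ and $\varepsilon > 0$. For each $t \in [0,1]$, continuity of $f$ at $(t,x_0)$ gives a product neighbourhood $I_t \times U_t$ of $(t,x_0)$ in $[0,1] \times X$ on which $\norm{f(s,x) - f(t,x_0)}_E < \varepsilon$; in particular, on $I_t \times U_t$ one has $\norm{f(s,x) - f(s,x_0)}_E < 2\varepsilon$. The open cover $\{I_t\}_{t \in [0,1]}$ of the compact interval $[0,1]$ admits a finite subcover $I_{t_1},\dots,I_{t_N}$; set $U := \bigcap_{j=1}^N U_{t_j}$, an open neighbourhood of $x_0$ in $X$. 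Then for every $x \in U$ and every $s \in [0,1]$ we have $s \in I_{t_j}$ for some $j$, hence $\norm{f(s,x) - f(s,x_0)}_E < 2\varepsilon$. Finally, for $x \in U$,
\begin{equation*}
  \norm{g(x) - g(x_0)}_E = \norm{\int_0^1 \big(f(t,x) - f(t,x_0)\big)\,dt}_E \le \int_0^1 \norm{f(t,x) - f(t,x_0)}_E \,dt \le 2\varepsilon,
\end{equation*}
which proves continuity of $g$ at $x_0$. Since $x_0$ was arbitrary, $g$ is continuous on $X$.

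\textbf{Main obstacle.} There is no serious obstacle here; this is a soft ``tube lemma'' argument. The only point requiring a little care is the passage from ``on $I_t \times U_t$ the values $f(s,x)$ are close to the single point $f(t,x_0)$'' to ``$f(s,x)$ is close to $f(s,x_0)$'', which is handled by the triangle inequality together with the fact that $f(s,x_0)$ is itself within $\varepsilon$ of $f(t,x_0)$ for $s \in I_t$ (taking $x = x_0$ in the same neighbourhood). One should also be slightly attentive to which integration theory is invoked for Banach-valued maps: since $t \mapsto f(t,x)$ is continuous on the compact $[0,1]$, the elementary Riemann integral for continuous Banach-space-valued functions suffices, and the norm estimate $\norm{\int h} \le \int \norm{h}$ is then immediate from the definition via Riemann sums. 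No measure theory or Bochner machinery is needed, which is presumably why the authors list this as ``elementary'' and state it without proof.
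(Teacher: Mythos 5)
Your proof is correct: the tube-lemma argument over the compact factor $[0,1]$, combined with the norm estimate for the Riemann integral of a continuous Banach-valued function, is exactly the standard argument, and the paper itself states this lemma as ``elementary'' and omits the proof entirely, so there is nothing to compare against. The one point you rightly flag --- passing from closeness to the single value $f(t,x_0)$ to closeness to $f(s,x_0)$ via the triangle inequality, using that $(s,x_0)$ also lies in $I_t\times U_t$ --- is handled correctly.
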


\begin{lemma}\label{lem:mean_value_criteria}
  Let $E$, $F$ be Banach spaces and $U$ a convex open set in $E$. Let $\alpha: U \to \mathcal{L}(E,F)$ be a continuous mapping and $f : U \to F$ a mapping such that
  \begin{equation*}
    f(y) - f(x) = \int_{0}^{1} \alpha(ty + (1-t)x)(y-x)\, dt,
  \end{equation*}
  for all $x,y \in U$. Then $f$ is $C^{1}$ on $U$ and $df = \alpha$.
\end{lemma}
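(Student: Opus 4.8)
The plan is to prove directly that $f$ is Fréchet-differentiable at every point $x \in U$ with derivative $\alpha(x)$; since $\alpha$ is continuous by hypothesis, this at once gives that $f$ is $C^{1}$ with $df = \alpha$. The only analytic ingredient required is the standard estimate $\norm{\int_{0}^{1} g(t)\,dt}_{F} \le \int_{0}^{1} \norm{g(t)}_{F}\,dt$ for the (Riemann) integral of a continuous map $g : [0,1] \to F$ into a Banach space, together with the remark that the integrand in the hypothesis is indeed such a continuous map: for fixed $x,y \in U$ the path $t \mapsto ty + (1-t)x$ is continuous and stays in $U$ (here the convexity of $U$ is used), $\alpha$ is continuous, and evaluation at $y-x$ is bounded linear, so $t \mapsto \alpha(ty+(1-t)x)(y-x)$ is continuous into $F$ and the integral is well defined.

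First I would fix $x \in U$ and, using that $U$ is open, choose $\rho > 0$ with the ball $B(x,\rho) \subset U$. For $h \in E$ with $\norm{h}_{E} < \rho$, applying the hypothesis with $y = x+h$ and using $t(x+h)+(1-t)x = x+th$ gives
\[
  f(x+h) - f(x) = \int_{0}^{1} \alpha(x+th)(h)\,dt .
\]
Writing the constant $\alpha(x)(h) = \int_{0}^{1} \alpha(x)(h)\,dt$ and subtracting, I obtain
\[
  f(x+h) - f(x) - \alpha(x)(h) = \int_{0}^{1} \big[\alpha(x+th) - \alpha(x)\big](h)\,dt .
\]

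Next I would estimate the remainder. Applying the Banach-valued integral inequality and the operator-norm bound $\norm{T(h)}_{F} \le \norm{T}_{\mathcal{L}(E,F)}\norm{h}_{E}$ yields
\[
  \norm{f(x+h) - f(x) - \alpha(x)(h)}_{F} \le \Big(\int_{0}^{1} \norm{\alpha(x+th) - \alpha(x)}_{\mathcal{L}(E,F)}\,dt\Big)\,\norm{h}_{E} .
\]
It remains to show the bracketed integral tends to $0$ as $h \to 0$. Given $\varepsilon > 0$, continuity of $\alpha$ at the single point $x$ yields $\delta \in (0,\rho)$ such that $\norm{z - x}_{E} < \delta$ implies $\norm{\alpha(z) - \alpha(x)}_{\mathcal{L}(E,F)} < \varepsilon$; then for $\norm{h}_{E} < \delta$ and every $t \in [0,1]$ one has $\norm{(x+th) - x}_{E} = t\norm{h}_{E} \le \norm{h}_{E} < \delta$, so the integrand is $< \varepsilon$ uniformly in $t$ and the integral is $\le \varepsilon$. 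Hence the difference quotient $\norm{f(x+h)-f(x)-\alpha(x)(h)}_{F}/\norm{h}_{E}$ is $\le \varepsilon$ whenever $0 < \norm{h}_{E} < \delta$, proving that $f$ is differentiable at $x$ with $df(x) = \alpha(x)$. Since $x \in U$ was arbitrary and $\alpha$ is continuous, $df = \alpha$ is continuous, that is, $f$ is $C^{1}$.

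The argument is almost entirely routine; the one point deserving care, which I regard as the \emph{crux}, is that we are given only pointwise continuity of $\alpha$, not uniform continuity. This is nonetheless sufficient because the estimate is localized at the fixed base point $x$: the segment $[x,x+h]$ collapses onto $x$ as $h \to 0$, so continuity of $\alpha$ at $x$ alone controls $\norm{\alpha(x+th) - \alpha(x)}_{\mathcal{L}(E,F)}$ uniformly over $t \in [0,1]$. No global modulus of continuity, and in particular no compactness of $U$, is needed.
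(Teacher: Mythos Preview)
Your proof is correct and entirely standard. Note that the paper explicitly states this lemma without proof (it is introduced as one of ``two elementary lemmas, which will be stated without proof''), so there is no proof in the paper to compare against; your argument is exactly the classical verification that the integral representation forces Fr\'echet differentiability with derivative $\alpha$, and it would serve perfectly well as the omitted proof.
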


\begin{proof}[Proof sketch of lemma~\ref{lem:smoothness-lemma}]
  If
  \begin{equation*}
    \varphi \mapsto A_{\varphi}, \quad \DRd{q} \to \mathcal{L}(\HRd{p_{1}},\HRd{p_{2}})
  \end{equation*}
  is smooth, then $A_{n}$, which is the $n$-th Fr\'{e}chet derivative $\partial^{n}_{id}A_{\varphi}$ at the identity is bounded.

  Conversely, suppose that each $A_{n}$ extends to a bounded $n$-linear operator in $\mathcal{L}^{n}(\HRd{q}, \mathcal{L}(\HRd{p_{1}},\HRd{p_{2}}))$. Then, by lemma~\ref{lem:composition}, the mapping
  \begin{equation*}
    \varphi \mapsto A_{n,\varphi}, \qquad \DRd{q} \to \mathcal{L}^{n}(\HRd{q}, \mathcal{L}(\HRd{p_{1}},\HRd{p_{2}}))
  \end{equation*}
  defined by
  \begin{equation*}
    A_{n,\varphi}(v_{1}, \dotsc , v_{n})v_{0} := \left(A_{n}(v_{1}\circ\varphi^{-1}, \dotsc , v_{n}\circ\varphi^{-1})v_{0}\circ\varphi^{-1}\right)\circ\varphi
  \end{equation*}
  is \emph{locally bounded}. We will now show that $A_{\varphi}$ is $C^{1}$. By reasoning inductively, we can prove the same way that $A_{\varphi}$ is smooth.

  Because $\varphi \mapsto A_{2,\varphi}$ is \emph{locally bounded}, it is possible to find around each diffeomorphism $\psi \in \DRd{q}$ a convex open set $U$ in $\DRd{q}$ and a positive constant $K$ such that
  \begin{equation*}
    \norm{A_{2,\varphi}} \le K, \quad \forall \varphi \in U.
  \end{equation*}
  Let $\varphi_{0},\varphi_{1}$ in $\DiffRd \cap U$ and set $\varphi(t):=(1-t)\varphi_{0}+t\varphi_{1}$ for $t\in [0,1]$. Given $v_{0},v_{1}\in\HRd{\infty}$ with $\norm{v_{0}}_{H^{q}}, \norm{v_{1}}_{H^{p_{1}}}\le 1$, we obtain, using the mean value theorem in $\HRd{\infty}$ that
  \begin{equation*}
    A_{1, \varphi_{1}}(v_{0},v_{1}) - A_{1, \varphi_{0}}(v_{0},v_{1}) =
    \int_{0}^{1} A_{2, \varphi(t)}(v_{0},v_{1}, \varphi_{1}-\varphi_{0} )\, dt,
  \end{equation*}
  and thus that
  \begin{equation*}
    \norm{A_{1, \varphi_{1}}(v_{0},v_{1}) - A_{1, \varphi_{0}}(v_{0},v_{1})}_{H^{p_{2}}} \le
    K \norm{ \varphi_{1}-\varphi_{0}}_{H^{q}},
  \end{equation*}
  for all $v_{0},v_{1}\in\HRd{\infty}$ with $\norm{v_{0}}_{H^{q}}, \norm{v_{1}}_{H^{p_{1}}}\le 1$. The assertion that $A_{1, \varphi}$ is Lipschitz continuous on $U$ follows from lemma~\ref{lem:path_integral_continuity} and the density of the space $\HRd{\infty}$ in the Sobolev spaces $\HRd{p}$. Now, we have
  \begin{equation*}
    A_{\varphi_{1}}(v) - A_{\varphi_{0}}(v) = \int_{0}^{1} A_{1, \varphi(t)}(v,\varphi_{1}-\varphi_{0})\, dt,
  \end{equation*}
  for all $\varphi_{0},\varphi_{1}\in U\cap\HRd{\infty}$ and $v \in \HRd{\infty}$ and we conclude similarly that this formula is still true for all $\varphi_{0}, \varphi_{1} \in U$ and $v\in \HRd{p_{1}}$. Therefore, we can write in $\mathcal{L}(\HRd{p_{1}},\HRd{p_{2}})$
  \begin{equation*}
    A_{\varphi_{1}} - A_{\varphi_{0}} = \int_{0}^{1} A_{1, \varphi(t)}(\varphi_{1}-\varphi_{0})\, dt,
  \end{equation*}
  and, by virtue of lemma~\ref{lem:mean_value_criteria}, we conclude that $\varphi \mapsto A_{\varphi}$ is $C^{1}$ and that $dA_{\varphi} = A_{1,\varphi}$.
\end{proof}

Lemma~\ref{lem:smoothness-lemma} has the following interesting corollary.

\begin{corollary}\label{cor:smoothness-equivalence}
  Let $A:\HRd{\infty} \to \HRd{\infty}$ be a continuous linear operator and $A_{\varphi} = R_{\varphi} \circ A \circ R_{\varphi^{-1}}$. Let $q > 1 + d/2$ and $0 \le p_{1}, p_{2} \le q$. Then, the following assertions are equivalent:
  \begin{enumerate}
    \item There exists a smooth extension
          \begin{equation*}
            (\varphi,v) \mapsto A_{\varphi}(v), \qquad \DRd{q}\times\HRd{p_{1}} \to \HRd{p_{2}}
          \end{equation*}
    \item There exists a smooth extension
          \begin{equation*}
            \varphi \mapsto A_{\varphi}, \qquad \DRd{q} \to \mathcal{L}(\HRd{p_{1}},\HRd{p_{2}}),
          \end{equation*}
  \end{enumerate}
\end{corollary}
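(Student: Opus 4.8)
The plan is to establish both implications by soft arguments about smooth maps between open subsets of Banach spaces, without re-entering the multilinear machinery behind lemma~\ref{lem:smoothness-lemma}; throughout, $\DRd{q}$ is regarded as an open subset of the Banach space $\HRd{q}$ via $\varphi\mapsto\varphi-\id$, and $\DiffRd\times\HRd{\infty}$ is dense in $\DRd{q}\times\HRd{p_{1}}$. For the implication $(2)\Rightarrow(1)$ I would use that the evaluation map
\[
  \mathrm{ev}:\mathcal{L}(\HRd{p_{1}},\HRd{p_{2}})\times\HRd{p_{1}}\to\HRd{p_{2}},\qquad (T,v)\mapsto T(v),
\]
is continuous and bilinear, hence $C^{\infty}$. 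If $\varphi\mapsto A_{\varphi}$ is smooth into $\mathcal{L}(\HRd{p_{1}},\HRd{p_{2}})$, then $(\varphi,v)\mapsto(A_{\varphi},v)$ is smooth and, composing with $\mathrm{ev}$, so is $(\varphi,v)\mapsto A_{\varphi}(v)$; it restricts to $R_{\varphi}AR_{\varphi^{-1}}(v)$ on $\DiffRd\times\HRd{\infty}$, hence is the desired extension.

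For the converse $(1)\Rightarrow(2)$, write $F(\varphi,v):=A_{\varphi}(v)$ for the smooth extension; since $A$ is linear, $F$ is linear in its second slot. The differential $dF:\DRd{q}\times\HRd{p_{1}}\to\mathcal{L}(\HRd{q}\times\HRd{p_{1}},\HRd{p_{2}})$ is smooth, and composing it with the bounded linear (hence smooth) ``restriction to the second factor'' map $T\mapsto T(0,\,\cdot\,)$, from $\mathcal{L}(\HRd{q}\times\HRd{p_{1}},\HRd{p_{2}})$ to $\mathcal{L}(\HRd{p_{1}},\HRd{p_{2}})$, shows that the partial derivative $\partial_{2}F:\DRd{q}\times\HRd{p_{1}}\to\mathcal{L}(\HRd{p_{1}},\HRd{p_{2}})$ is smooth. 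But for fixed $\varphi$ the map $F(\varphi,\,\cdot\,)$ is continuous and linear, hence a bounded operator whose derivative at every point is itself; therefore $\partial_{2}F(\varphi,v)=A_{\varphi}$ independently of $v$, and precomposing $\partial_{2}F$ with the smooth map $\varphi\mapsto(\varphi,0)$ gives that $\varphi\mapsto A_{\varphi}$ is smooth from $\DRd{q}$ to $\mathcal{L}(\HRd{p_{1}},\HRd{p_{2}})$. It agrees with $R_{\varphi}AR_{\varphi^{-1}}$ on $\DiffRd$, so it is the required extension (and evaluating at $\varphi=\id$ recovers, en passant, that $A$ itself extends boundedly to $\HRd{p_{1}}\to\HRd{p_{2}}$, as in the hypothesis of lemma~\ref{lem:smoothness-lemma}).

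I do not expect a genuine obstacle: the argument is formal once one verifies that evaluation, the ``second-factor restriction'' of a bounded operator, and $\varphi\mapsto(\varphi,0)$ are all smooth, and that linearity of $A$ forces $\partial_{2}F$ to be $v$-independent and equal to $A_{\varphi}$. The one point where one might be tempted to invoke heavier tools — checking that $A_{\varphi}$ is genuinely a \emph{bounded} operator for each $\varphi$ — is already settled by continuity of $F(\varphi,\,\cdot\,)$, which is why the corollary drops out so quickly. Alternatively one could phrase the proof through lemma~\ref{lem:smoothness-lemma}, showing that both $(1)$ and $(2)$ amount to the boundedness of every $A_{n}$ in $\mathcal{L}^{n}(\HRd{q},\mathcal{L}(\HRd{p_{1}},\HRd{p_{2}}))$, but the route above is shorter. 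The torus case is handled verbatim in local charts for $\DTd{q}$.
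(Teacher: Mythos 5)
Your proof is correct, and the nontrivial direction $(1)\Rightarrow(2)$ is argued by a genuinely different and more elementary route than the paper's. The paper deduces $(1)\Rightarrow(2)$ by going back through the multilinear machinery: from the smoothness of $(\varphi,v)\mapsto A_{\varphi}(v)$ it extracts boundedness of each derivative $A_{n}$ at the identity in $\mathcal{L}^{n}(\HRd{q},\mathcal{L}(\HRd{p_{1}},\HRd{p_{2}}))$ and then re-invokes lemma~\ref{lem:smoothness-lemma} to conclude. You instead exploit only the linearity of $F(\varphi,\cdot)$ (which indeed propagates from $\DiffRd\times\HRd{\infty}$ to all of $\DRd{q}\times\HRd{p_{1}}$ by density and continuity), identify $A_{\varphi}$ with the partial derivative $\partial_{2}F(\varphi,0)$, and read off smoothness of $\varphi\mapsto A_{\varphi}$ from the smoothness of $dF$ composed with the bounded linear restriction $T\mapsto T(0,\cdot)$. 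This is a clean, self-contained soft argument that bypasses lemma~\ref{lem:smoothness-lemma} entirely and, as you note, recovers en route the boundedness of $A:\HRd{p_{1}}\to\HRd{p_{2}}$ that the lemma takes as a hypothesis; what the paper's route buys in exchange is the explicit identification of all derivatives of $\varphi\mapsto A_{\varphi}$ with the operators $A_{n,\varphi}$, which is what is actually used elsewhere (e.g.\ in the proof of theorem~\ref{thm:smoothness-spray}). Both arguments are valid; yours is shorter for the equivalence as stated.
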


\begin{proof}
  The fact that (2) implies (1) is trivial and results from the fact that
  \begin{equation*}
    (P,v) \mapsto P(v), \quad \mathcal{L}(\HRd{p_{1}},\HRd{p_{2}}) \times \HRd{p_{1}} \to \HRd{p_{2}}
  \end{equation*}
  is continuous. Conversely, suppose that
  \begin{equation*}
    (\varphi,v) \mapsto A_{\varphi}(v), \qquad \DRd{q}\times\HRd{p_{1}} \to \HRd{p_{2}}
  \end{equation*}
  is smooth. Then, in particular, for each $n \ge 1$,
  \begin{equation*}
    v \mapsto A_{n}(v,\cdot, \dotsc , \cdot), \qquad \HRd{p_{1}} \to \mathcal{L}^{n}(\HRd{q},\HRd{p_{2}})
  \end{equation*}
  is bounded. Therefore each $A_{n}$ is bounded as an operator in
  \begin{equation*}
    \mathcal{L}^{n}(\HRd{q}, \mathcal{L}(\HRd{p_{1}},\HRd{p_{2}}))
  \end{equation*}
  and by lemma~\ref{lem:smoothness-lemma} the mapping
  \begin{equation*}
    \varphi \mapsto A_{\varphi}, \qquad \DRd{q} \to \mathcal{L}(\HRd{p_{1}},\HRd{p_{2}}),
  \end{equation*}
  is smooth.
\end{proof}

Let $A : \HRd{\infty} \to \HRd{\infty}$ be a continuous linear operator which is \emph{translation invariant}. Then, it can be written (see for instance~\cite[Part II, Section 2.1]{RT2010} ) as
\begin{equation*}
  (Au)(x) = \int_{\Rd} e^{2\pi i x\cdot \xi} a(\xi) \hat{u}(\xi) \,d\xi \,,
\end{equation*}
where $a: \Rd \to \mathcal{L}(\CC^{d})$. Such an operator, also noted $a(D)$ or $\op{a(\xi)}$, is called a \emph{Fourier multiplier with symbol $a$}.

\begin{remark}
  A necessary and sufficient condition for $A$ to be \emph{$L^{2}$-symmetric and positive definite} is that $a(\xi)$ is \emph{Hermitian and positive definite} for all $\xi \in \Rd$.
\end{remark}

Of course, some regularity conditions are required on the symbol $a$ to insure that the operator is well-defined. In the following, we will restrict ourselves to a class of symbols for which this operation is well-defined on $\HRd{\infty}$ and leads to operators with nice properties.

\begin{definition}\label{def:class-Sr}
  Given $r \in \RR$, a Fourier multiplier $a(D)$ is of class $S^{r}$ iff $a\in C^\infty(\Rd,\mathcal{L}(\CC^{d}))$ and for each $\alpha \in \NN^{d}$ there exists a positive constant $C_{a,\alpha}$ such that
  \begin{equation*}
    \abs{\partial^{\alpha}a(\xi)} \le C_{a,\alpha} \left\langle \xi \right\rangle^{r-\abs{\alpha}}\,,
  \end{equation*}
  where $\abs{\alpha} := \alpha_{1} + \dotsb + \alpha_{d}$ and $\left\langle \xi \right\rangle := \left( 1 + \abs{\xi}^{2}\right)^{1/2}$.
\end{definition}

\begin{example}
  Any linear differential operator of order $r$ with constant coefficients is in this class. Furthermore, $\Lambda^{r} := \op{\left\langle \xi \right\rangle^{r}}$ belongs to this class.
\end{example}

\begin{remark}
  Note that a Fourier multiplier $a(D)$ of class $S^{r}$ extends to a \emph{bounded linear operator}
  \begin{equation*}
    \HRd{q} \to \HRd{q-r}
  \end{equation*}
  for any $q\in\RR$ and $r \ge q$, and hence on $\HRd{\infty}$.
\end{remark}

For a Fourier multiplier on the torus $\Td$, the symbol $a$ is a function
\begin{equation*}
  a: \ZZ^{d} \to \mathcal{L}(\CC^{d})
\end{equation*}
and the definition of the class $S^{r}$ should be adapted. We define therefore the \emph{discrete partial derivative}, or \emph{partial difference operator}, as
\begin{equation*}
  (\triangle_{j} a)(\xi) := a(\xi + e_{j}) - a(\xi) \,,
\end{equation*}
where $(e_{j})$ is the canonical basis of $\ZZ^{d}$. We extend then the definition to define $\triangle^{\alpha}$ for a multi-index $\alpha \in \NN^{d}$ as
\begin{equation*}
  \triangle^{\alpha} := \triangle_{1}^{\alpha_{1}} \dotsb \triangle_{d}^{\alpha_{d}} \,.
\end{equation*}

\begin{definition}\label{def:toroidal-class-Sr}
  Given $r \in \RR$, a Fourier multiplier $a(D)$ is of \emph{toroidal class} $S^{r}$ iff for each $\alpha \in \NN^{d}$ there exists a positive constant $C_{a,\alpha}$ such that
  \begin{equation*}
    \abs{\triangle^{\alpha} a(\xi)} \le C_{a,\alpha} \left\langle \xi \right\rangle^{r-\abs{\alpha}}\,.
  \end{equation*}
\end{definition}

\begin{example}
  Any linear differential operator of order $r$ with constant coefficients on $C^{\infty}(\Td)$ as well as $\Lambda^{r} := \op{\left\langle \xi \right\rangle^{r}}$ are in the toroidal class $S^{r}$. The operator $\op{\abs{\xi}}$, where $\abs{\xi} = \abs{\xi^{1}} + \dotsb + \abs{\xi^{d}}$, belongs to the toroidal class $S^{1}$.
\end{example}

We will now prove the following theorem.

\begin{theorem}\label{thm:twisted-map-smoothness}
  Let $A = a(D)$ be a Fourier multiplier of class $S^{r}$, with $r \ge 1$. Then the mapping $\varphi \mapsto A_{\varphi} := R_{\varphi^{-1}}A R_{\varphi}$ extends smoothly to
  \begin{equation*}
    \DRd{q} \to \mathcal{L}(\HRd{q},\HRd{q-r})
  \end{equation*}
  for $q > 1 + d/2$ and $r \le q$. The same result holds for the torus $\Td$ if $A$ is a Fourier multiplier with symbol in the toroidal class $S^{r}$.
\end{theorem}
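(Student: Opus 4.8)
The plan is to reduce the statement to the Smoothness Lemma (lemma~\ref{lem:smoothness-lemma}), applied with $p_{1} = q$ and $p_{2} = q-r$. Because $a$ is of class $S^{r}$ and $r \le q$, the remark following definition~\ref{def:class-Sr} already yields that $A_{0} = A = a(D)$ extends to a bounded operator $\HRd{q} \to \HRd{q-r}$, so by lemma~\ref{lem:smoothness-lemma} the theorem is equivalent to the purely analytic statement that for every $n \ge 1$ the multilinear operator $A_{n}$ generated by the recurrence~\eqref{eq:def-Rec} extends to a bounded $n$-linear map
\begin{equation*}
  A_{n} \in \mathcal{L}^{n}\bigl(\HRd{q},\, \mathcal{L}(\HRd{q},\HRd{q-r})\bigr),
\end{equation*}
i.e. $\norm{A_{n}(u_{1}, \dotsc, u_{n})\,u_{0}}_{H^{q-r}} \le C\,\norm{u_{0}}_{H^{q}}\norm{u_{1}}_{H^{q}}\dotsm\norm{u_{n}}_{H^{q}}$. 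The whole geometric side of the problem has thus been compressed into this single family of estimates.

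To establish it, I would first unwind the recurrence~\eqref{eq:def-Rec} by induction on $n$: it expresses $A_{n}(u_{1}, \dotsc, u_{n})$ as a finite linear combination of iterated commutators
\begin{equation*}
  \bigl[\nabla_{w_{1}}, \bigl[ \dotsm \bigl[ \nabla_{w_{m}}, A \bigr] \dotsm \bigr]\bigr],
\end{equation*}
where each weight $w_{i}$ is an iterated covariant derivative $\nabla_{u_{j_{1}}}\dotsm\nabla_{u_{j_{\ell-1}}}u_{j_{\ell}}$ built from some of the $u_{j}$'s, each index $1, \dotsc, n$ occurring exactly once over all the $w_{i}$ together; the passage from $A_{n}$ to $A_{n+1}$ is precisely the alternative ``$\nabla_{u_{n+1}}$ opens a new outermost bracket'' / ``$\nabla_{u_{n+1}}$ is absorbed into an existing weight'' encoded in~\eqref{eq:def-Rec} (the cases $n = 1, 2$ appear in example~\ref{ex:A1-A2}). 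Since $a$ is of class $S^{r}$ with $r \ge 1$, every bracket against the first-order operator $\nabla_{w_{i}}$ trades one derivative falling on $w_{i}$ for one order of the symbol; making this gain quantitative and uniform in the number $m$ of brackets is exactly the content of the commutator estimate lemma~\ref{lem:boundedness-lemma}, a multilinear descendant of the Kato--Ponce inequality~\cite{KP1988}, and combined with lemma~\ref{lem:pointwise-multiplication} (used to redistribute the covariant derivatives defining the weights back onto the $u_{j}$'s) it produces exactly the bound above under the hypotheses $q > 1 + d/2$ and $1 \le r \le q$. Lemma~\ref{lem:smoothness-lemma} then delivers the smoothness of $\varphi \mapsto A_{\varphi}$ from $\DRd{q}$ to $\mathcal{L}(\HRd{q},\HRd{q-r})$. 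For the torus the scheme is word for word the same, once the Fourier integral is replaced by a Fourier series and every symbol derivative $\partial^{\alpha}a$ by the partial difference operator $\triangle^{\alpha}a$; this is why the toroidal class $S^{r}$ of definition~\ref{def:toroidal-class-Sr} is the correct substitute for definition~\ref{def:class-Sr}.

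The serious obstacle is lemma~\ref{lem:boundedness-lemma} itself. When $A$ is a differential operator the iterated commutators terminate after finitely many nonzero steps and collapse to pointwise multiplication by derivatives of $\varphi$ and of $1/J_{\varphi}$ (compare the remark with $D_{\varphi} = (1/\varphi_{x})D$ in dimension one), so the elementary product estimates of lemma~\ref{lem:pointwise-multiplication} already suffice. For a genuine Fourier multiplier neither simplification is available: one must make the ``one derivative gained per bracket'' heuristic rigorous while keeping all constants under control uniformly in $m$ and in frequency, which forces paraproduct decompositions and symbolic-calculus bounds rather than Sobolev-algebra estimates, and on the torus one must carry the partial difference operators through the whole argument in place of the usual symbol derivatives. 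Everything else --- the reduction via lemma~\ref{lem:smoothness-lemma} and the combinatorial unwinding of~\eqref{eq:def-Rec} --- is routine.
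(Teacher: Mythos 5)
Your proposal follows essentially the same route as the paper: reduce via the Smoothness Lemma (lemma~\ref{lem:smoothness-lemma}) to the boundedness of each $A_{n}$, unwind the recurrence~\eqref{eq:def-Rec} combinatorially into iterated commutators, and close the estimate with lemma~\ref{lem:boundedness-lemma} together with the algebra property of $H^{q-1}$. Two minor calibrations: the commutators actually fed into lemma~\ref{lem:boundedness-lemma} are $[f_{1},[\dotsc,[f_{n},D^{\alpha}A]\dotsc]]$ with \emph{multiplication} operators $f_{i}$ (obtained from your $[\nabla_{w},\cdot]$ form via $[\nabla_{w},A]=\sum_{j}[w^{j},A]D_{j}$, which also explains the shift to class $S^{r+n-1}$), and the paper proves that lemma by a direct multilinear symbol estimate plus Plancherel and lemma~\ref{lem:pointwise-multiplication}, with no paraproduct machinery.
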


The first proof of theorem~\ref{thm:twisted-map-smoothness} for the special case of the circle was obtained in~\cite{EK2014}. This proof was then extended to the case of $\Rd$ for $d \ge 1$ in~\cite{BEK2015}. We sketch here a slightly alternative proof, hopefully simpler. It is based on the following lemma.

\begin{lemma}\label{lem:boundedness-lemma}
  Let $P$ be a Fourier multiplier of class $S^{r+n-1}$. Given $w \in \HRd{\infty}$ and $f_{1}, \dotsc , f_{n} \in \HR{\infty}$, we have
  \begin{equation*}
    \norm{S_{n,P}(f_{1},\dotsc , f_{n})w}_{H^{q-r}} \lesssim \norm{f_{1}}_{H^{q}} \dotsb \norm{f_{n}}_{H^{q}} \norm{w}_{H^{q-1}}
  \end{equation*}
  for $q > 1 + d/2$ and $1 \le r \le q$, where
  \begin{equation*}
    S_{n,P}(f_{1},f_{2}, \dotsc , f_{n}) := [f_{1},[f_{2},[ \dotsc [ f_{n}, P]\dotsc]]]\,.
  \end{equation*}
\end{lemma}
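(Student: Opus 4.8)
The plan is to prove the estimate by induction on $n$, peeling off one commutator at a time. The base case $n=1$ asserts that $\norm{[f_1,P]w}_{H^{q-r}} \lesssim \norm{f_1}_{H^q}\norm{w}_{H^{q-1}}$ for a Fourier multiplier $P$ of class $S^r$; this is precisely (a variant of) the Kato--Ponce commutator estimate~\cite{KP1988}, and I would invoke it as the fundamental input. The key gain here is that commuting $f_1$ past $P$ lowers the effective order of $P$ by one: the commutator $[f_1,P]$ behaves like an operator of order $r-1$, which is what allows us to measure $w$ in $H^{q-1}$ rather than $H^{q-r}$ and still land in $H^{q-r}$.

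For the inductive step, write $S_{n,P}(f_1,\dots,f_n) = [f_1, S_{n-1,P}(f_2,\dots,f_n)]$. The obstacle is that $S_{n-1,P}$ is no longer a Fourier multiplier — it is an operator depending on $f_2,\dots,f_n$ — so the clean base case cannot be applied directly. The remedy is to track the \emph{symbolic order} rather than the precise form: I would show that each inner commutator, applied to a function, produces something controlled like an operator of class $S^{r+(n-1)-1} = S^{r+n-2}$ modulo factors of $\norm{f_j}_{H^q}$. Concretely, one decomposes $[f_1, B]w$ for $B = S_{n-1,P}(f_2,\dots,f_n)$ using the expansion $f_1 \cdot Bw - B(f_1 w)$, and estimates each piece with Lemma~\ref{lem:pointwise-multiplication} (to handle the multiplication by $f_1$, since $q > d/2$ makes $H^q$ an algebra) together with the induction hypothesis applied to $B$ with the hypothesis order raised by one. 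The bookkeeping must ensure that applying $B$ to $f_1 w$ only costs one derivative of $w$: since $\norm{f_1 w}_{H^{q-1}} \lesssim \norm{f_1}_{H^q}\norm{w}_{H^{q-1}}$ by Lemma~\ref{lem:pointwise-multiplication} (with $p = q-1 \le q$), the induction hypothesis for $S_{n-1,P}$ with symbol class $S^{r+n-2}$ gives $\norm{B(f_1 w)}_{H^{q-r}} \lesssim \norm{f_2}_{H^q}\cdots\norm{f_n}_{H^q}\norm{f_1 w}_{H^{q-1}}$, which is exactly the target bound.

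The main difficulty I anticipate is making rigorous the claim that the nested commutators retain the correct symbolic order — i.e. that $[f_1,[f_2,[\dots[f_n,P]\dots]]]$ really maps $H^{q-1}\to H^{q-r}$ boundedly even though the intermediate objects are not multipliers. One clean way around this is to avoid carrying the induction on $S_{n,P}$ itself and instead run the induction on a pseudodifferential symbol calculus statement: prove that for $a$ of class $S^m$ and $f\in H^q$ with $q>1+d/2$, the commutator $[f,a(D)]$ can be written as $b(D) + r(x,D)$ where $b$ is (heuristically) of class $S^{m-1}$ with seminorms bounded by $\norm{f}_{H^q}$ and $r$ is a smoothing remainder of order $m-1$ likewise controlled. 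Iterating this $n$ times and summing the remainder terms then yields the claim. Either route reduces everything to the single-commutator Kato--Ponce-type estimate plus the algebra property of $H^q$, and the rest is the routine (but careful) tracking of orders and seminorm constants, which per the remark preceding the statement the reader is invited to carry out.
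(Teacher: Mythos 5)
There is a genuine gap, and it sits exactly at the point you yourself flag as ``the main difficulty''. Your inductive step estimates $[f_{1},B]w$ with $B=S_{n-1,P}(f_{2},\dotsc,f_{n})$ by splitting it into $f_{1}\cdot Bw$ and $B(f_{1}w)$ and bounding each term separately. But the only information the induction hypothesis provides about $B$ is an operator bound $H^{q-1}\to H^{q-r'}$, and since $P$ is of class $S^{r+n-1}=S^{(r+1)+(n-1)-1}$, the hypothesis at level $n-1$ applies with $r'=r+1$, not $r$: you invoke it ``with symbol class $S^{r+n-2}$'', but $P$ is one order higher than that (a multiplier of class $S^{m}$ is of class $S^{m'}$ only for $m'\ge m$). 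Correcting this, both terms of your split land only in $H^{q-r-1}$, one derivative short of the target $H^{q-r}$. This is not a bookkeeping slip: the missing derivative is precisely the cancellation between the two terms of the outermost commutator, and no term-by-term estimate can see it. Recovering it requires structural (symbolic) information about $B$ beyond an operator-norm bound --- which is what your fallback pseudodifferential route would supply --- but $[f,a(D)]$ with $f$ merely in $H^{q}$ is an operator with a non-smooth symbol, and composing and iterating such operators $n$ times is a substantial piece of non-smooth (paradifferential) calculus that the proposal only gestures at.

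The paper avoids the commutator-by-commutator induction altogether: it computes the exact $(n+1)$-linear frequency representation of the full nested commutator, $\widehat{S_{n,P}}(\xi)=\int_{\xi_{0}+\dotsb+\xi_{n}=\xi}\hat f_{1}(\xi_{1})\dotsm\hat f_{n}(\xi_{n})\,p_{n}(\xi_{0},\dotsc,\xi_{n})\hat w(\xi_{0})\,d\mu$ with $p_{n}(\xi_{0},\dotsc,\xi_{n})=\sum_{J}(-1)^{\abs{J}}p\bigl(\xi_{0}+\sum_{j\in J}\xi_{j}\bigr)$, and captures the entire gain of $n$ derivatives in one stroke via an iterated mean value theorem, $\abs{p_{n}}\lesssim\bigl(\prod_{j}\langle\xi_{j}\rangle\bigr)\sup_{K_{n}}\langle\xi\rangle^{r-1}$, which is estimate~\eqref{eq:pn-estimate}; the conclusion then follows from Plancherel and Lemma~\ref{lem:pointwise-multiplication}. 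Your base case $n=1$ is fine in spirit (it is the paper's argument with $p_{1}(\xi_{0},\xi_{1})=p(\xi_{0})-p(\xi_{0}+\xi_{1})$ and one application of the mean value theorem), but to make the induction go through you would have to strengthen the inductive hypothesis so that it records this finite-difference information about the symbol of $S_{n-1,P}$ --- at which point you have essentially reconstructed the paper's multilinear symbol estimate.
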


\begin{proof}
  Let $P = \op{p}$. Then, the Fourier transform of $S_{n,P}(f_{1},\dotsc , f_{n})w$ can be written as
  \begin{equation}\label{eq:integral-formula-for-Sn}
    \widehat{S_{n,P}}(\xi) = \int_{\xi_{0} + \dotsb + \xi_{n} = \xi} \hat{f}_{1}(\xi_{1}) \dotsm \hat{f}_{n}(\xi_{n}) \left[ p_{n}(\xi_{0}, \dotsc ,\xi_{n}). \hat{w}(\xi_{0}) \right] \, d\mu
  \end{equation}
  where $d\mu$ is the Lebesgue measure on the subspace $\xi_{0} + \dotsb + \xi_{n} = \xi$ of $(\RR^{d})^{n+1}$ and
  \begin{equation*}
    p_{n}(\xi_{0},\xi_{1}, \dotsc ,\xi_{n}) := \sum_{J \subset \set{1, \dotsc , n}}(-1)^{\abs{J}} p\left(\xi_{0} + \sum_{j\in J}\xi_{j} \right) \,.
  \end{equation*}
  Now, the observation that the sequence $p_{n}$ satisfies the recurrence relation
  \begin{equation*}
    p_{k+1}(\xi_{0},\dotsc,\xi_{k+1}) = p_{k}(\xi_{0},\dotsc,\xi_{k}) - p_{k}(\xi_{0}+\xi_{k+1}, \xi_{1},\dots,\xi_{k})\,,
  \end{equation*}
  and the iterative application of the mean value theorem leads to the following estimate:
  \begin{equation}\label{eq:pn-estimate}
    \begin{aligned}
      \abs{p_{n}(\xi_{0},\dotsc,\xi_{n})} & \le \left( \prod_{j=1}^{n} \abs{\xi_{j}} \right) \cdot \sup_{\xi \in K_{n}} \abs{d^{n} p(\xi)}                                                    \\
                                          & \le C_{p,n} \left( \prod_{j=1}^{n} \left\langle \xi_{j} \right\rangle \right) \cdot \sup_{\xi \in K_{n}} \left\langle \xi \right\rangle^{r-1} \,,
    \end{aligned}
  \end{equation}
  where $K_{n}$ is the convex hull of the points $\xi_{0} + \sum_{j \in J} \xi_j$, where $J$ is any subset of $\{1,\dotsc,n\}$ (see~\cite{BEK2015} or~\cite{EK2014} for the details). For $r \geq 1$, the function $\xi \mapsto \langle \xi \rangle^{r-1}$ attains its maximum on $K_{n}$ at one of the extremal points $\xi_{0} + \sum_{j \in J} \xi_{j}$. Hence
  \begin{equation*}
    \sup_{\xi \in K_{n}} \left\langle \xi \right\rangle^{r-1} \leq
    \sum_{J \subseteq \set{1, \dotsc , n}} \left\langle \xi_{0} + \sum_{j \in J} \xi_{j} \right\rangle^{r-1}\,.
  \end{equation*}
  We have therefore
  \begin{multline*}
    \abs{\widehat{S_{n,P}}(\xi)} \le C_{p,n} \sum_{J \subseteq \set{1, \dotsc , n}}
    \\
    \int_{\xi_{0} + \dotsb +\xi_{n} = \xi} \left\langle \xi_{0} + \sum_{j \in J} \xi_{j} \right\rangle^{r-1} \left( \prod_{j=1}^{n} \langle \xi_{j} \rangle \right) \abs{\hat{f}_{1}(\xi_{1}) \dotsb \hat{f}_{n}(\xi_{n})} \abs{\hat{w}(\xi_{0})} \, d\mu \,.
  \end{multline*}
  But
  \begin{multline*}
    \int_{\xi_{0} + \dotsb +\xi_{n} = \xi} \left\langle \xi_{0} + \sum_{j \in J} \xi_{j} \right\rangle^{r-1} \left( \prod_{j=1}^{n} \langle \xi_{j} \rangle \right) \abs{\hat{f}_{1}(\xi_{1}) \dotsb \hat{f}_{n}(\xi_{n})} \abs{\hat{w}(\xi_{0})} \, d\mu
    \\
    = \mathfrak{F} \left( \left[\prod_{j \in J^{c}} \Lambda^{1}(\tilde{f}_{j})\right] \Lambda^{r-1} \left[\tilde{w} \prod_{k \in J} \Lambda^{1}(\tilde{f}_{k}) \right] \right) (\xi) \,,
  \end{multline*}
  where $\Lambda^{s}$ is the Fourier multiplier with symbol $\langle \xi \rangle^{s}$, $\tilde{f}_{j} := \mathfrak{F}^{-1}\left(\abs{\langle \xi_{j} \rangle \hat{f}_{j}}\right)$ and $\tilde{w} := \mathfrak{F}^{-1}\left(\abs{\hat{w}}\right)$. We have thus, using the Plancherel identity
  \begin{multline*}
    \norm{S_{n,P}(f_{1},\dotsc , f_{n})w}_{H^{q-r}} = \norm{\langle \xi \rangle^{q-r} \mathfrak{F}( S_{n,P}(f_{1},\dotsc , f_{n})w)}_{L^{2}} \\
    \le C_{p,n} \sum_{J \subseteq \set{1, \dotsc , n}} \norm{ \left[\prod_{j \in J^{c}} \Lambda^{1}(\tilde{f}_{j})\right] \Lambda^{r-1} \left[\tilde{w} \prod_{k \in J} \Lambda^{1}(\tilde{f}_{k}) \right] }_{H^{q-r}} \,.
  \end{multline*}
  By lemma~\ref{lem:pointwise-multiplication}, we have moreover
  \begin{multline*}
    \norm{ \left[\prod_{j \in J^{c}} \Lambda^{1}(\tilde{f}_{j})\right] \Lambda^{r-1} \left[\tilde{w} \prod_{k \in J} \Lambda^{1}(\tilde{f}_{k}) \right] }_{H^{q-r}}
    \\
    \lesssim \norm{ \prod_{j \in J^{c}}\Lambda^{1}(\tilde{f}_{j})}_{H^{q-1}} \norm{\Lambda^{r-1} \left[\tilde{w} \prod_{k \in J} \Lambda^{1}(\tilde{f}_{k})\right] }_{H^{q-r}}\,,
  \end{multline*}
  which is bounded by $\norm{f_{1}}_{H^{q}} \dotsb \norm{f_{n}}_{H^{q}} \norm{w}_{H^{q-1}}$. This achieves the proof.
\end{proof}

\begin{remark}
  Lemma~\ref{lem:boundedness-lemma} is also true for a Fourier multiplier of toroidal class $S^{r+n-1}$ on the torus $\Td$. The only difference is that we can no longer use the mean value theorem to establish estimate~\eqref{eq:pn-estimate}. In that case, we shall use the following \emph{discrete version of the mean value theorem}:
  \begin{equation*}
    \abs{a(\eta + \xi) - a(\eta)} \le \abs{\xi} \max_{1 \le i \le d} \left(\max_{0 \le k < \abs{\xi^{i}}} \abs{\triangle_{i} a (\eta + ke_{i})}\right) \,,
  \end{equation*}
  where $\eta, \xi \in \ZZ^{d}$ and $\abs{\xi} := \abs{\xi^{1}} + \dotsb + \abs{\xi^{d}}$.
\end{remark}

\begin{proof}[Proof of Theorem~\ref{thm:twisted-map-smoothness}]
  Due to lemma~\ref{lem:smoothness-lemma}, it is enough to show that each $A_{n}$ is bounded. Note first that $A_{1}$ can be written as
  \begin{equation*}
    A_{1}(u_{1}) = \sum_{j=1}^{d} [u_{1}^{j},A]D_{j}\,,
  \end{equation*}
  where $D_{j} := \partial/\partial x^{j}$. Now, using combinatorial properties of commutators and the recurrence relation~\ref{eq:def-Rec} (see~\cite{Cis2015} for the details), we can show that for each $n \ge 1$, $A_{n}$ is a finite sum of terms
  \begin{equation}\label{eq:generic-term}
    Q_{n}(f_{1},\dotsc , f_{n}) := S_{p,D^{\alpha}A}(f_{1},\dotsc , f_{p})\partial_{i_{p+1}}f_{p+1} \dotsm \partial_{i_{n}}f_{n}D_{j}
  \end{equation}
  where
  \begin{equation*}
    D^{\alpha}:= \partial_{x_{1}}^{\alpha_{1}} \dotsm \partial_{x_{d}}^{\alpha_{d}}, \quad \abs{\alpha} = p-1, \quad p \ge 1 \,,
  \end{equation*}
  and $(f_{1},f_{2}, \dotsc , f_{n})$ stands for a permutation $(u_{\sigma(1)}^{k_{1}}, \dotsc , u_{\sigma(n)}^{k_{n}})$ of some components of $(u_{1}, \dotsc , u_{n})$. Then, using lemma~\ref{lem:boundedness-lemma}, we get
  \begin{align*}
    \norm{Q_{n}(f_{1},\dotsc , f_{n})w}_{H^{q-r}} & \lesssim \norm{f_{1}}_{H^{q}} \dotsb \norm{f_{p}}_{H^{q}}\norm{\partial_{i_{p+1}}f_{p+1} \dotsm \partial_{i_{n}}f_{n}D_{j}w}_{H^{q-1}}
    \\
                                                  & \lesssim \norm{f_{1}}_{H^{q}} \dotsb \norm{f_{n}}_{H^{q}} \norm{w}_{H^{q}}
  \end{align*}
  because $\HR{q-1}$ is a multiplicative algebra. This achieves the proof.
\end{proof}

\begin{corollary}
  Let $A = \mathbf{a}(D)$ belong to the class $\mathcal{S}^{r}$. Then the metric
  \begin{equation*}
    G_{\varphi}(v_{1},v_{2}) = \int_{\Rd}  \left( A_\varphi v_{1} \cdot v_{2} \right) \varphi^{*} d\mu \,,
  \end{equation*}
  extends to a smooth Riemannian metric on $\DRd{q}$ for $q > 1 + d/2$ and $r \le q$. The same result holds for the torus $\Td$ if $A$ is a Fourier multiplier with symbol in the toroidal class $S^{r}$.
\end{corollary}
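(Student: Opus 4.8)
The plan is to realise $\varphi \mapsto G_{\varphi}$ as a composition of smooth maps between Banach manifolds, the one substantial ingredient being the smoothness of the twisted map (Theorem~\ref{thm:twisted-map-smoothness}); the rest is bookkeeping with Sobolev exponents. First I would put the metric in a form adapted to the Hilbert setting. Every $\varphi \in \DRd{q}$ has $J_{\varphi} := \det d\varphi > 0$, so the pullback of the Lebesgue density is $\varphi^{*}d\mu = J_{\varphi}\,d\mu$; writing $J_{\varphi} = 1 + j_{\varphi}$ with $j_{\varphi} \in \HR{q-1}$ (the map $\varphi \mapsto j_{\varphi}$ being smooth into $\HR{q-1}$, cf.\ the discussion preceding Lemma~\ref{lem:inverse-Jacobian-smoothness}, where one uses that $\HR{q-1}$ is a multiplicative algebra since $q-1>d/2$), one gets
\[
  G_{\varphi}(v_{1},v_{2}) = \int_{\Rd} \big( A_{\varphi}v_{1}\cdot v_{2} \big)\,d\mu + \int_{\Rd} \big( A_{\varphi}v_{1}\cdot v_{2} \big)\,j_{\varphi}\,d\mu.
\]
Using the canonical trivialisation $T\DRd{q} \cong \DRd{q} \times \HRd{q}$, a (weak) Riemannian metric on $\DRd{q}$ is exactly a smooth map $\varphi \mapsto G_{\varphi}$ into the Banach space $\mathcal{L}^{2}_{\mathrm{sym}}(\HRd{q},\RR)$ of bounded symmetric bilinear forms which is moreover pointwise positive definite, and this is what I would establish.

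For smoothness I would factor this map through
\[
  \DRd{q} \xrightarrow{\ \varphi\, \mapsto\, (A_{\varphi},\,j_{\varphi})\ } \mathcal{L}(\HRd{q},\HRd{q-r}) \times \HR{q-1} \xrightarrow{\ (P,j)\, \mapsto\, B_{P,j}\ } \mathcal{L}^{2}_{\mathrm{sym}}(\HRd{q},\RR),
\]
where $B_{P,j}(v_{1},v_{2}) := \int_{\Rd}(Pv_{1}\cdot v_{2})\,d\mu + \int_{\Rd}(Pv_{1}\cdot v_{2})\,j\,d\mu$. (We may assume $1 \le r \le q$; for $r < 1$ one has $A \in S^{1}$ since $S^{r}\subseteq S^{1}$, and the argument runs with $r$ replaced by $1$.) The first arrow is smooth by Theorem~\ref{thm:twisted-map-smoothness} together with the smoothness of $\varphi \mapsto j_{\varphi}$. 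The map $(P,j)\mapsto B_{P,j}$ is the sum of a map linear in $P$ and a map bilinear in $(P,j)$, and a bounded linear or bilinear map between Banach spaces is real-analytic; so it suffices to check boundedness. This is the only point needing care on $\Rd$, where $L^{2}\not\subset L^{1}$: writing the integrands componentwise as $(Pv_{1})^{i}v_{2}^{i}$ and $(Pv_{1})^{i}v_{2}^{i}j$, pairing $v_{2}^{i}$ with $j$ via $\norm{v_{2}^{i}j}_{H^{q-1}} \lesssim \norm{v_{2}^{i}}_{H^{q}}\norm{j}_{H^{q-1}}$ (Lemma~\ref{lem:pointwise-multiplication}, valid since $0 \le q - 1 \le q$ and $q > d/2$), and using $\norm{f}_{L^{2}} \le \norm{f}_{H^{s}}$ for $s \ge 0$ together with $q - r \ge 0$, the Cauchy--Schwarz inequality gives
\[
  \abs{B_{P,j}(v_{1},v_{2})} \lesssim \norm{P}_{\mathcal{L}(\HRd{q},\HRd{q-r})}\big(1 + \norm{j}_{H^{q-1}}\big)\norm{v_{1}}_{H^{q}}\norm{v_{2}}_{H^{q}}.
\]
Hence the second arrow is smooth, and therefore so is $\varphi \mapsto G_{\varphi} = B_{A_{\varphi},\,j_{\varphi}}$.

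It then remains to see that each $G_{\varphi}$ is a weak inner product on $T_{\varphi}\DRd{q}$; boundedness is the estimate above, so only symmetry and positivity are at stake. Since $\DRd{q}$ is a group, $\varphi^{-1}\in\DRd{q}$ and $R_{\varphi^{-1}}:w\mapsto w\circ\varphi^{-1}$ is a linear automorphism of $\HRd{q}$ (inverse $R_{\varphi}$, by Lemma~\ref{lem:composition}); unwinding $A_{\varphi} = R_{\varphi}\circ A\circ R_{\varphi^{-1}}$ and changing variables $x = \varphi(y)$ (whose Jacobian cancels the $J_{\varphi}$) gives
\[
  G_{\varphi}(v_{1},v_{2}) = \int_{\Rd} A(v_{1}\circ\varphi^{-1})\cdot(v_{2}\circ\varphi^{-1})\,d\mu,
\]
which, by the Plancherel identity (legitimate since $Aw_{i}\in\HRd{q-r}\subset L^{2}$ as $q-r\ge 0$), equals $\int_{\Rd}\langle a(\xi)\widehat{w_{1}}(\xi),\widehat{w_{2}}(\xi)\rangle_{\CC^{d}}\,d\xi$ with $w_{i} := v_{i}\circ\varphi^{-1}$. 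By the standing assumption — needed already for $G$ to be a metric, cf.\ the Remark following the definition of a Fourier multiplier — that $A$ is $L^{2}$-symmetric and positive definite, i.e.\ $a(\xi)$ is Hermitian and positive definite for all $\xi$, this form is symmetric, is nonnegative when $w_{1} = w_{2}$, and vanishes only when $\widehat{w_{1}}\equiv 0$, i.e.\ $v_{1} = 0$. Together with the smoothness just proved, this shows $G$ extends to a (weak) Riemannian metric on $\DRd{q}$.

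The torus case is obtained verbatim, invoking the toroidal versions of Theorem~\ref{thm:twisted-map-smoothness} and of Lemmas~\ref{lem:pointwise-multiplication} and~\ref{lem:inverse-Jacobian-smoothness}; it is slightly easier, since $L^{2}(\Td)\hookrightarrow L^{1}(\Td)$ makes all the integrals trivially convergent. The only genuinely hard step is Theorem~\ref{thm:twisted-map-smoothness}, already established; everything else reduces, as displayed, to the Sobolev multiplication lemma and the smoothness of $\varphi\mapsto A_{\varphi}$ and $\varphi\mapsto J_{\varphi}$.
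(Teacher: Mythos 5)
The paper states this corollary without proof, as an immediate consequence of Theorem~\ref{thm:twisted-map-smoothness}; your argument is correct and supplies exactly the intended deduction, combining that theorem with Lemmas~\ref{lem:pointwise-multiplication} and~\ref{lem:inverse-Jacobian-smoothness}, the smoothness of $\varphi\mapsto J_{\varphi}$, and the standing assumption from the Introduction that $a(\xi)$ is Hermitian and positive definite. The only cosmetic point is that $B_{P,j}$ is not symmetric for an arbitrary $P$, so your second arrow should be taken to land in the space $\mathcal{L}^{2}(\HRd{q},\RR)$ of all bounded bilinear forms (or its closed subspace of symmetric ones after the fact), with the symmetry of $G_{\varphi}$ itself verified by the change-of-variables and Plancherel computation exactly as you do.
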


\begin{remark}
  This applies, in particular, to $H^{s}$-metrics on $\DiffRd$ or $\DiffTd$ where $s \in \RR$ and $s \ge 1/2$. The Constantin--Lax--Majda equation corresponds to the inertia operator $A = \op{\abs{k}}$ in the toroidal class $\mathcal{S}^{1}$ and the Euler--Weil--Peterson equation to the inertia operator $A = \op{\abs{k}(k^{2}-1)}$ in the toroidal class $\mathcal{S}^{3}$.
\end{remark}


\section{Local well-posedness in the smooth category}
\label{sec:local-well-posedness}

A simple criteria which ensures that the inertia operators induces a \emph{bounded isomorphism} between $\HRd{q}$ and $\HRd{q-r}$ for all $q\in \RR$ big enough is provided by an \emph{ellipticity condition} on $A$. In that case, both the \emph{spray method} (see subsection~\ref{subsec:spray}) and the \emph{particle trajectory method} (see subsection~\ref{subsec:part-traj}) will lead to a local well-posedness for the geodesic flow. We will adopt the following definition.

\begin{definition}
  A Fourier multiplier $a(D)$ in the class $\mathcal{S}^{r}$ is called \emph{elliptic} if $a(\xi)\in\mathrm{GL}(\CC^{d})$ for all $\xi\in\Rd$ and moreover
  \begin{equation*}
    \norm{[a(\xi)]^{-1}} \lesssim \left( 1 + \abs{\xi}^{2}\right)^{-r/2}, \qquad \forall \xi \in \Rd.
  \end{equation*}
\end{definition}

The local existence of geodesics on the Hilbert manifold $T\DRd{q}$ follows then from the Cauchy-Lipschitz theorem, due to
the smoothness of the extended spray
\begin{equation*}
  F_{q}(\varphi,v) := (\varphi,v,v,S_{\varphi}(v))
\end{equation*}
on $T\DRd{q}$ or of the extended Ebin vector field
\begin{equation*}
  X_{q,u_{0}}(\varphi) := A_{\varphi}^{-1}\left(\frac{1}{J_{\varphi}} (d\varphi^{-1})^{t} Au_{0} \right)
\end{equation*}
on $\DRd{q}$, for each $u_{0}\in \HRd{q}$.

\begin{theorem}\label{thm:smooth_flow-Hq}
  Let $A$ be an elliptic Fourier multiplier in the class $S^{r}$ with $r \ge 1$. Let $q > 1 + d/2$ with $r \le q$. Consider the geodesic flow on the tangent bundle $T\DRd{q}$ induced by the inertia operator $A$. Then, given any $(\varphi_{0},v_{0})\in
  T\DRd{q}$, there exists a unique non-extendable geodesic
  \begin{equation*}
    (\varphi, v)\in C^{\infty}(J_{q},T\DRd{q})
  \end{equation*}
  on the maximal interval of existence $J_{q}$, which is open and contains $0$.
\end{theorem}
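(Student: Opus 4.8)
The plan is to produce the geodesic as an integral curve of the extended geodesic spray $F_{q}\colon (\varphi,v)\mapsto(\varphi,v,v,S_{\varphi}(v))$ on the Hilbert manifold $T\DRd{q}$, and then to invoke the Cauchy--Lipschitz theorem for ODEs. Recall from Section~\ref{subsec:spray} that a geodesic is by definition an integral curve of the geodesic spray~\eqref{eq:geodesic-spray}, and that $T\DRd{q}=\DRd{q}\times\HRd{q}$ is an open subset of the Hilbert space $\HRd{q}\times\HRd{q}$. So the whole statement will follow once $F_{q}$ is shown to be a \emph{smooth vector field} on $T\DRd{q}$: the classical existence, uniqueness and maximal-interval theory for ODEs on Hilbert spaces then gives, for each $(\varphi_{0},v_{0})$, a unique non-extendable integral curve defined on an interval $J_{q}$ which is open and contains $0$.

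To obtain the smoothness of $F_{q}$, I would apply Theorem~\ref{thm:smoothness-spray}. Its hypotheses are that $A$ extends to an element of $\mathrm{Isom}(\HRd{q},\HRd{q-r})$ and that $\varphi\mapsto A_{\varphi}$ extends smoothly to a map $\DRd{q}\to\mathcal{L}(\HRd{q},\HRd{q-r})$. The second is precisely Theorem~\ref{thm:twisted-map-smoothness}, which applies because $A=a(D)$ is a Fourier multiplier of class $S^{r}$ with $r\ge1$. For the first, the remark following Definition~\ref{def:class-Sr} already gives that $A$ is bounded $\HRd{q}\to\HRd{q-r}$; ellipticity says that $a(\xi)\in\mathrm{GL}(\CC^{d})$ with $\norm{[a(\xi)]^{-1}}\lesssim\langle\xi\rangle^{-r}$, and iterating the matrix identity $\partial_{i}(a^{-1})=-a^{-1}(\partial_{i}a)a^{-1}$ together with the $S^{r}$ bounds on $a$ shows that $\xi\mapsto[a(\xi)]^{-1}$ is a symbol of class $S^{-r}$. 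Hence $A^{-1}:=\op{[a(\xi)]^{-1}}$ is a bounded operator $\HRd{q-r}\to\HRd{q}$ and a genuine two-sided inverse of $A$, so $A\in\mathrm{Isom}(\HRd{q},\HRd{q-r})$. Theorem~\ref{thm:smoothness-spray} then yields that $F_{q}$ is smooth on $T\DRd{q}$ (alternatively, the same input feeds Theorem~\ref{thm:smoothness-X} for Ebin's vector field, giving the particle-trajectory version of the result).

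It remains to run the ODE argument. Since $F_{q}$ is smooth, it is in particular locally Lipschitz on the open set $\DRd{q}\times\HRd{q}$ of $\HRd{q}\times\HRd{q}$, so Picard--Lindel\"of produces a unique local integral curve through $(\varphi_{0},v_{0})$; concatenating such local solutions gives a unique non-extendable one, defined on the union $J_{q}$ of their domains, which is an open interval containing $0$. The curve is $C^{\infty}$ in time by a bootstrap: it is $C^{1}$ by construction, and if $(\varphi,v)$ is $C^{k}$ then $t\mapsto F_{q}(\varphi(t),v(t))$ is $C^{k}$, so $(\varphi,v)$ is $C^{k+1}$. By the reduction of Section~\ref{subsec:spray} this integral curve is exactly the geodesic issued from $(\varphi_{0},v_{0})$. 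The torus case is handled identically, working in charts of the Hilbert manifold $T\DTd{q}$ (trivialized as recalled in Section~\ref{sec:framework}) and using the toroidal-class-$S^{r}$ form of Theorem~\ref{thm:twisted-map-smoothness}.

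The one genuinely new computation, and the only place where real care is needed, is the passage from ellipticity to $a^{-1}\in S^{-r}$: one must control \emph{all} the seminorms $\sup_{\xi}\langle\xi\rangle^{r+\abs{\alpha}}\abs{\partial^{\alpha}(a^{-1})(\xi)}$, not merely the sup-norm of $a^{-1}$, which is where the iterated inverse-derivative formula and the full family of $S^{r}$ bounds on $a$ are used; everything else is a bookkeeping assembly of Theorems~\ref{thm:smoothness-spray} and~\ref{thm:twisted-map-smoothness} and of standard ODE theory, together with the $C^{\infty}$-in-time bootstrap noted above.
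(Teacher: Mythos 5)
Your proposal is correct and follows essentially the same route as the paper, which only sketches this step: ellipticity gives $A\in\mathrm{Isom}(\HRd{q},\HRd{q-r})$, Theorem~\ref{thm:twisted-map-smoothness} gives the smooth twisted map, Theorem~\ref{thm:smoothness-spray} gives the smooth spray, and Cauchy--Lipschitz on the Hilbert manifold $T\DRd{q}$ finishes. The only remark is that your ``one genuinely new computation'' is more than is needed: to conclude that $\op{[a(\xi)]^{-1}}$ is bounded from $\HRd{q-r}$ to $\HRd{q}$ (and is a two-sided inverse of $A$), Plancherel together with the pointwise ellipticity bound $\norm{[a(\xi)]^{-1}}\lesssim\langle\xi\rangle^{-r}$ already suffices, so the iterated derivative estimates showing $a^{-1}\in S^{-r}$ are correct but not required here.
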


To state a well-posedness result for the EPDiff equation in $\HRd{q}$, we need first to recall that we cannot conclude from theorem~\ref{thm:smooth_flow-Hq} that the curve
\begin{equation*}
  t \mapsto u(t) = v(t) \circ \varphi^{-1}(t), \qquad J_{q} \to \HRd{q}
\end{equation*}
is smooth, because the mapping
\begin{equation*}
  (v,\varphi) \mapsto v \circ \varphi^{-1}, \qquad \HRd{q} \times \DRd{q} \to \HRd{q}
\end{equation*}
is only continuous. However, using \cite[Theorem 1.1]{IKT2013} (and Remark 1.5 therein), we deduce\footnote{Strictly speaking, to apply theorem 1.1 in~\cite{IKT2013}, we should take $q > 2 + d/2$ in corollary~\ref{cor:Euler-well-posedness-Hq} because of the way theorem 1.1 is formulated but this is artificial. See ~\cite[Corollary B.6]{EK2014} for a proof in dimension 1.} that the curve
\begin{equation*}
  t \mapsto u(t) = v(t) \circ \varphi^{-1}(t), \qquad J_{q} \to \HRd{q-1}
\end{equation*}
is $C^{1}$ which leads to the following result on the initial value problem for the EPDiff equation.

\begin{corollary}\label{cor:Euler-well-posedness-Hq}
  Let $A$ be an elliptic Fourier multiplier in the class $S^{r}$ with $r \ge 1$. Let $q > 1 + d/2$ with $r \le q$. The corresponding Euler--Arnold equation~\eqref{eq:Diff-Euler-Arnold} has, for any initial data
  $u_{0}\in\HRd{q}$, a unique non-extendable solution
  \begin{equation*}
    u\in C^{0}(J_{q},\HRd{q}) \cap C^{1}(J_{q},\HRd{q-1}).
  \end{equation*}
  The maximal interval of existence $J_{q}$ is open and contains $0$.
\end{corollary}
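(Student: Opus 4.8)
The plan is to deduce this corollary from Theorem~\ref{thm:smooth_flow-Hq} by transporting the regularity of the Lagrangian flow to the Eulerian velocity. First I would check that the hypotheses of Theorem~\ref{thm:smooth_flow-Hq} hold: since $A = a(D)$ is elliptic of class $S^{r}$ it extends to an element of $\mathrm{Isom}(\HRd{q},\HRd{q-r})$ for every real $q$ with $r \le q$, and by Theorem~\ref{thm:twisted-map-smoothness} the twisted map $\varphi \mapsto A_{\varphi}$ extends smoothly to $\DRd{q} \to \mathcal{L}(\HRd{q},\HRd{q-r})$; hence, by Theorem~\ref{thm:smoothness-spray} (equivalently Theorem~\ref{thm:smoothness-X}), the geodesic spray, resp. Ebin's vector field, extends smoothly to $T\DRd{q}$. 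Applying Theorem~\ref{thm:smooth_flow-Hq} with initial data $(\id,u_{0})$ produces a unique non-extendable geodesic $(\varphi,v) \in C^{\infty}(J_{q},T\DRd{q})$ with $\varphi(0) = \id$, $v(0) = u_{0}$, on an open interval $J_{q} \ni 0$.

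Next I would set $u(t) := v(t) \circ \varphi(t)^{-1}$ and establish its claimed regularity. The inclusion $u \in C^{0}(J_{q},\HRd{q})$ is immediate, since $t \mapsto \varphi(t)$ is continuous into $\DRd{q}$, inversion is continuous on the topological group $\DRd{q}$, and composition $\HRd{q}\times\DRd{q}\to\HRd{q}$ is continuous by Lemma~\ref{lem:composition}. The hard part will be the $C^{1}$ regularity into $\HRd{q-1}$: the map $(v,\varphi)\mapsto v\circ\varphi^{-1}$ is merely continuous, not $C^{1}$, from $\HRd{q}\times\DRd{q}$ to $\HRd{q}$, so one cannot simply differentiate. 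Here I would invoke \cite[Theorem~1.1 and Remark~1.5]{IKT2013}, which is exactly tailored to control this \emph{loss of one derivative} for curves arising from the geodesic flow, obtaining that $t\mapsto u(t)$ is $C^{1}$ into $\HRd{q-1}$ with $u_{t} = v_{t}\circ\varphi^{-1} - \big((dv)\circ\varphi^{-1}\big)\,d(\varphi^{-1})\,u$, both terms lying in $\HRd{q-1}$ because $q-1>d/2$ makes $\HR{q-1}$ a multiplicative algebra (Lemma~\ref{lem:pointwise-multiplication}). As noted in the footnote, a verbatim application of \cite[Theorem~1.1]{IKT2013} would require $q > 2 + d/2$, but this restriction is artificial; compare \cite[Corollary~B.6]{EK2014} for $d=1$.

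Then I would verify that $u$ solves the Euler--Arnold equation~\eqref{eq:Diff-Euler-Arnold}. This is precisely the spray reduction of Subsection~\ref{subsec:spray} run backwards: with $\varphi_{t} = v$ and $v = u\circ\varphi$, the system~\eqref{eq:spray} for $(\varphi,v)$ is equivalent, along the flow, to the EPDiff equation~\eqref{eq:EPDiff}, hence to~\eqref{eq:Diff-Euler-Arnold} since $A$ is invertible; the computations of Subsection~\ref{subsec:spray} are legitimate because $u \in C^{0}(J_{q},\HRd{q})\cap C^{1}(J_{q},\HRd{q-1})$, which also makes every term in~\eqref{eq:Diff-Euler-Arnold} classical, the right-hand side being a continuous map $\HRd{q}\to\HRd{q-1}$ by Lemma~\ref{lem:pointwise-multiplication} together with $A^{-1}\in\mathrm{Isom}(\HRd{q-r},\HRd{q})$.

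Finally, for uniqueness and non-extendability I would argue through the correspondence geodesic $\leftrightarrow$ Eulerian velocity. Given any solution $\tilde{u}\in C^{0}(J,\HRd{q})\cap C^{1}(J,\HRd{q-1})$ with $\tilde{u}(0)=u_{0}$, its flow $\tilde{\varphi}$ (solving $\tilde{\varphi}_{t} = \tilde{u}\circ\tilde{\varphi}$, $\tilde{\varphi}(0)=\id$) is a well-defined curve in $\DRd{q}$ — again by \cite{IKT2013}, using $\HRd{q}\hookrightarrow C^{1}$ from Lemma~\ref{lem:sobolev-embedding} — and $(\tilde{\varphi},\tilde{u}\circ\tilde{\varphi})$ then solves the spray ODE~\eqref{eq:spray} with initial data $(\id,u_{0})$; the uniqueness part of Theorem~\ref{thm:smooth_flow-Hq} forces $\tilde{\varphi}=\varphi$ on $J\cap J_{q}$, hence $\tilde{u}=u$ there, and a standard continuation argument together with the non-extendability of $(\varphi,v)$ shows that $J_{q}$ is the maximal interval of existence of $u$ and is open. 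The single genuine obstacle in this scheme is the $C^{1}$ regularity transfer of the second paragraph; everything else is bookkeeping on top of Theorem~\ref{thm:smooth_flow-Hq}.
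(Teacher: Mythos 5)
Your proposal is correct and follows essentially the same route as the paper: apply Theorem~\ref{thm:smooth_flow-Hq} to the geodesic flow, transfer regularity to $u = v\circ\varphi^{-1}$ via the continuity of composition for the $C^{0}$ statement and \cite[Theorem~1.1 and Remark~1.5]{IKT2013} for the $C^{1}$ statement into $\HRd{q-1}$, and recover uniqueness and non-extendability from the Lagrangian picture. You have correctly identified the one genuine difficulty (the loss of a derivative when differentiating the composition) and the same tool the paper uses to resolve it, so nothing further is needed.
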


\begin{remark}
  The same results hold for the geodesic flow on $T\DTd{q}$ with an elliptic inertia operator $A$ in the toroidal class $S^{r}$ with $r \ge 1$.
\end{remark}

It was pointed out in~\cite[Theorem 12.1]{EM1970} that the maximal interval of existence $J_{q}$ is, in fact, independent of the
parameter $q$, due to the invariance of the spray under the conjugate action of the translation group (or the rotation group in the case of the torus). This allows to avoid Nash--Moser type schemes to prove local existence of smooth geodesics in the Fr\'{e}chet category.

\begin{lemma}\label{lem:nlng}
  Given $(\varphi_{0},u_{0})\in T\DRd{q+1}$, we have
  \begin{equation*}
    J_{q+1}(\varphi_{0},u_{0})= J_{q}(\varphi_{0},u_{0}),
  \end{equation*}
  for $q> 1 + d/2$ and $r \le q$. The same result holds on the torus $\Td$.
\end{lemma}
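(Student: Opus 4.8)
The plan is to exploit the symmetry of the geodesic equation under conjugation by the translation group, which is the ``no loss, no gain'' mechanism of Ebin--Marsden. Fix $(\varphi_0,u_0)\in T\DRd{q+1}$ and let $(\varphi,v)\in C^\infty(J_{q+1},T\DRd{q+1})$ be the maximal geodesic with this initial data, which exists by Theorem~\ref{thm:smooth_flow-Hq} applied at regularity $q+1$. Since $\HRd{q+1}\subset\HRd{q}$, the same curve is also a geodesic in $T\DRd{q}$ with the same initial data, so by uniqueness at regularity $q$ it coincides with the maximal $H^q$-geodesic on their common interval; hence $J_{q+1}\subseteq J_q$. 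The whole content of the lemma is therefore the reverse inclusion $J_q\subseteq J_{q+1}$, i.e.\ an a priori regularity statement: a geodesic that starts in $H^{q+1}$ and is known to exist only in $H^q$ on $J_q$ must in fact stay in $H^{q+1}$ throughout $J_q$.

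The key step is to show that the $H^{q+1}$-norm of the geodesic cannot blow up strictly inside $J_q$. First I would reduce to the case $\varphi_0=\id$ (geodesics are not left-invariant, but translations are isometries: if $\tau_a$ denotes translation by $a\in\Rd$, then $\varphi\mapsto \tau_a\circ\varphi$ sends geodesics to geodesics and preserves both the $H^q$- and $H^{q+1}$-regularity and the maximal interval, so composing on the right reduces a general $\varphi_0$ to $\id$ after a shift). With $\varphi_0=\id$, I would then use that for any fixed $a\in\Rd$ the translated curve $t\mapsto \tau_a\circ\varphi(t)\circ\tau_{-a}$ is again the $H^q$-geodesic with initial velocity $\tau_a^*u_0$; differentiating the geodesic flow with respect to the translation parameter $a$ at $a=0$ produces, via smooth dependence of the $H^q$-flow on initial conditions (Theorem~\ref{thm:smooth_flow-Hq} gives a $C^\infty$ flow on $T\DRd{q}$), a solution of the linearized equation whose value at $t=0$ is $(\partial_j\varphi_0,\partial_j u_0)=(e_j, \partial_j u_0)\in T\DRd{q}$. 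Thus each spatial derivative $\partial_j\varphi(t)$, $\partial_j v(t)$ solves the variational equation along the $H^q$-geodesic and hence is a $C^0$ curve in $H^q$ on all of $J_q$; this is precisely the statement that $\varphi(t)-\id,\;v(t)\in H^{q+1}$ for every $t\in J_q$, with $H^{q+1}$-bounds that are locally bounded on $J_q$.

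To finish, I would invoke the standard blow-up (escape-to-the-boundary) criterion for the ODE on $T\DRd{q+1}$: the $H^{q+1}$-maximal geodesic either exists for all time or leaves every compact subset of $T\DRd{q+1}$ as $t$ approaches an endpoint of $J_{q+1}$; in particular $\norm{\varphi(t)-\id}_{H^{q+1}}+\norm{v(t)}_{H^{q+1}}$ must become unbounded, \emph{or} the diffeomorphism condition degenerates ($\inf_x\det d\varphi(t,x)\to 0$), as $t$ tends to an endpoint of $J_{q+1}$ that lies in $J_q$. But the previous paragraph gives locally bounded $H^{q+1}$-norms on all of $J_q$, and the non-degeneracy of $d\varphi$ is already controlled on $J_q$ from the $H^q$-theory. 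This contradiction forces the endpoints of $J_{q+1}$ to agree with those of $J_q$, so $J_q\subseteq J_{q+1}$ and equality holds. The same argument works verbatim on $\Td$ with the rotation group $\RR^d/\ZZ^d$ in place of the translation group.

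I expect the main obstacle to be the second step: making rigorous that differentiating the $H^q$-geodesic flow in the translation parameter yields exactly the curves $t\mapsto(\partial_j\varphi(t),\partial_j v(t))$, and that control of these in $C^0(J_q,\HRd{q})$ genuinely upgrades $\varphi(t)-\id$ and $v(t)$ to $C^0(J_q,\HRd{q+1})$ with locally bounded norm. This requires care about what ``smooth dependence on initial data'' of the flow $F_q$ on $T\DRd{q}$ provides (continuity of the derivative of the flow map with respect to initial conditions, uniform on compact time-intervals), and about the elementary but slightly fiddly fact that a function whose value and all first-order partials lie in $H^q$ lies in $H^{q+1}$ with comparable norm. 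None of this is deep, but it is where the real work of the proof is concentrated; the rest is bookkeeping with the symmetry and the standard ODE escape lemma.
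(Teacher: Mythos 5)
Your proposal is correct and follows essentially the same route as the paper: both rest on the invariance of the spray (and hence of its flow) under conjugation by translations, differentiating the equivariance identity in the translation parameter to identify $(\partial_i\varphi(t),\partial_i v(t))$ with the derivative of the $H^{q}$-flow in an $H^{q}$ direction, which keeps the solution in $T\DRd{q+1}$ throughout $J_{q}$. Your extra framing (reduction to $\varphi_{0}=\id$, explicit appeal to the ODE escape criterion) only makes explicit steps the paper's sketch leaves implicit.
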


\begin{proof}[Sketch of proof]
  We will do the proof for $\Rd$ (the proof is similar for $\Td$). Consider the standard action of $\Rd$ on itself
  \begin{equation*}
    (c,x) \mapsto t_{c}(x) = x + c, \qquad x,c \in \Rd\,.
  \end{equation*}
  Note that, even if the translation $t_{c}$ does not belong to $\DRd{q}$, the following conjugate action
  \begin{equation*}
    \psi : (c,\varphi) \mapsto t_{-c} \circ \varphi \circ t_{c}
  \end{equation*}
  of $\Rd$ on $\DRd{q}$ is well defined. Moreover, for each fixed $c \in \Rd$, the transformation $\psi_{c} := \psi(c, \cdot)$ is a smooth Riemannian isometry for the metric $G$ defined by \eqref{eq:definition-metric} on the Hilbert manifold $\DTd{q}$, when $A$ is a Fourier multiplier. Therefore, the geodesic spray $F_{q}$ is invariant under the induced action of $\psi$ on $T\DRd{q}$ and the same is true for its flow $\Phi_{q}$. Hence
  \begin{equation}\label{eq:flow-equivariance}
    \Phi_{q}(t,T\psi_{c}(\varphi_{0},u_{0})) = T\psi_{c} \left[\Phi_{q}(t,(\varphi_{0},u_{0}))\right],
  \end{equation}
  for all $t\in J_{q}(\varphi_{0},u_{0})$ and $c\in\Rd$. Observe now that, if $(\varphi_{0},u_{0})\in T\DRd{q+1}$, then, the mapping
  \begin{equation*}
    c \mapsto T\psi_{c} (\varphi_{0},u_{0}), \quad \Rd \to T\DRd{q}
  \end{equation*}
  is $C^{1}$. Moreover, if $(e_{i})$ denotes the canonical basis of $\Rd$, we have
  \begin{equation*}
    \left.\frac{d}{ds}\right|_{s=0} T\psi_{se_{i}}(\varphi_{0},u_{0})) = (\partial_{i}\varphi_{0}, \partial_{i} u_{0})\,.
  \end{equation*}
  Therefore, if $(\varphi_{0},u_{0})\in T\DRd{q+1}$, we get from~\eqref{eq:flow-equivariance}
  \begin{equation*}
    \partial_{(\varphi,v)}\Phi_{q}(t,(\varphi_{0},u_{0})).(\partial_{i}\varphi_{0}, \partial_{i} u_{0}) = (\partial_{i}\varphi(t), \partial_{i} v(t)).
  \end{equation*}
  But
  \begin{equation*}
    \partial_{(\varphi,v)}\Phi_{q}(t,(\varphi_{0},v_{0})).(\partial_{i}\varphi_{0}, \partial_{i} u_{0}) \in \HRd{q}\times\HRd{q},
  \end{equation*}
  for $1 \le i \le d$, and hence
  \begin{equation*}
    (\varphi(t),v(t))\in T\DRd{q+1}\quad\text{for all}\quad t\in J_{q}(\varphi_{0},v_{0}).
  \end{equation*}
  We conclude therefore that
  \begin{equation*}
    J_{q}(\varphi_{0},v_{0}) = J_{q+1}(\varphi_{0},v_{0}),
  \end{equation*}
  which completes the proof.
\end{proof}

\begin{remark}\label{rem:noloss_nogain}
  Lemma~\ref{lem:nlng} states that there is \emph{no loss} of spatial
  regularity during the evolution. By reversing the time direction, it
  follows from the unique solvability that there is also \emph{no gain} of
  regularity.
\end{remark}

\begin{remark}
  A similar result as lemma~\ref{lem:nlng} holds for the flow of Ebin's vector field $X_{q}$ on $\DRd{q}$. Indeed, we have
  \begin{equation*}
    X_{q, u_{0} \circ t_{c}}(\psi_{c} \cdot \varphi) = X_{q, u_{0}}(\varphi) \circ t_{c}\,,
  \end{equation*}
  which allows to establish a well-posedness result in the smooth category, using the particle-trajectory method, as well.
\end{remark}

We get therefore the following local existence result.

\begin{theorem}\label{thm:smooth_flow-Hinfty}
  Let $A$ be an elliptic Fourier multiplier in the class $S^{r}$ with $r \ge 1$ and
  consider the geodesic flow on the tangent bundle $T\DiffRd$. Then, given any initial data $(\varphi_{0},u_{0})\in
  T\DiffRd$, there exists a unique non-extendable geodesic
  \begin{equation*}
    (\varphi, v)\in C^\infty(J,T\DiffRd)
  \end{equation*}
  on the maximal interval of existence $J$, which is open and contains $0$. The same result hold for the torus.
\end{theorem}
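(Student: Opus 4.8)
The plan is to deduce the smooth-category statement from the Hilbert-category results (Theorem~\ref{thm:smooth_flow-Hq} and Corollary~\ref{cor:Euler-well-posedness-Hq}) together with the \emph{no loss, no gain} lemma (Lemma~\ref{lem:nlng}), by a standard inverse-limit argument. Fix an initial datum $(\varphi_{0},u_{0}) \in T\DiffRd = \bigcap_{q>1+d/2} T\DRd{q}$. For each admissible $q$ (that is, $q>1+d/2$ and $q \ge r$), Theorem~\ref{thm:smooth_flow-Hq} produces a unique non-extendable geodesic $(\varphi^{q},v^{q}) \in C^{\infty}(J_{q},T\DRd{q})$ with $J_{q}$ open, containing $0$, where $J_{q} = J_{q}(\varphi_{0},u_{0})$.

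First I would observe that these geodesics are mutually compatible: for $q' \ge q$, the curve $(\varphi^{q'},v^{q'})$ is in particular a $C^{\infty}$ curve in $T\DRd{q}$ solving the same second-order ODE with the same initial condition, so by uniqueness in $T\DRd{q}$ it coincides with $(\varphi^{q},v^{q})$ on $J_{q'} \subseteq J_{q}$. Next, the key point is that the interval is actually independent of $q$: by Lemma~\ref{lem:nlng} we have $J_{q+1}(\varphi_{0},u_{0}) = J_{q}(\varphi_{0},u_{0})$ for all admissible $q$, and iterating this equality gives a common open interval $J := J_{q_{0}}$ (for any fixed admissible $q_{0}$) such that $J_{q} = J$ for \emph{every} admissible $q$. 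Consequently the compatible family $\{(\varphi^{q},v^{q})\}$ is defined on the single interval $J$, and gluing yields a map $(\varphi,v) : J \to \bigcap_{q} T\DRd{q} = T\DiffRd$ which, for each $q$, is the restriction of a $C^{\infty}(J,T\DRd{q})$ curve. By the definition of the ILH (inverse-limit Hilbert) smooth structure on $T\DiffRd$ as the inverse limit of the $T\DRd{q}$, this means $(\varphi,v) \in C^{\infty}(J,T\DiffRd)$, and it is a geodesic for the metric $G$ there since it is one at every Hilbert level.

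It remains to check non-extendability and uniqueness in the smooth category. For non-extendability: if $(\varphi,v)$ extended smoothly past an endpoint of $J$ inside $T\DiffRd$, it would in particular extend inside $T\DRd{q_{0}}$ past the endpoint of $J_{q_{0}} = J$, contradicting the maximality of $J_{q_{0}}$ from Theorem~\ref{thm:smooth_flow-Hq}. For uniqueness: any smooth geodesic in $T\DiffRd$ through $(\varphi_{0},u_{0})$ is, for each admissible $q$, a smooth geodesic in $T\DRd{q}$ with the same initial data, hence equals $(\varphi^{q},v^{q})$ by uniqueness there; since this holds for all $q$, it equals $(\varphi,v)$. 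The torus case is identical, replacing $\DRd{q}$ by $\DTd{q}$, the translation action by the rotation action in Lemma~\ref{lem:nlng}, and invoking the toroidal versions of the underlying results. The only delicate point worth flagging is that the smooth ILH structure is genuinely the inverse limit one, so that ``$C^{\infty}$ into each $T\DRd{q}$'' really does assemble to ``$C^{\infty}$ into $T\DiffRd$''; this is built into the ILH framework of Omori and causes no real difficulty here. I expect the main conceptual content to be entirely concentrated in Lemma~\ref{lem:nlng} (the $q$-independence of $J_{q}$), which is what makes the inverse limit nonempty on a fixed time interval; the rest is bookkeeping.
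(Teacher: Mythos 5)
Your proposal is correct and follows exactly the route the paper intends (and leaves implicit): combine the Hilbert-level existence and uniqueness of Theorem~\ref{thm:smooth_flow-Hq} with the $q$-independence of the maximal interval from Lemma~\ref{lem:nlng}, then pass to the inverse limit to land in $T\DiffRd$. Your explicit handling of the compatibility of the solutions across levels (via $J_{q'}\subseteq J_{q}$ for $q'\ge q$, which together with $J_{q+1}=J_{q}$ pins down all real levels) is precisely the bookkeeping the paper omits.
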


We also obtain local well-posedness of the EPDiff equation.

\begin{corollary}\label{cor:Euler-well-posedness-Hinfty}
  The corresponding Euler equation has for any initial data
  $u_{0}\in\CS(\RR^{d})$ a unique non-extendable smooth solution
  \begin{equation*}
    u\in C^{\infty}(J,\CS(\RR^{d})).
  \end{equation*}
  The maximal interval of existence $J$ is open and contains $0$. The same result hold for the torus $\Td$.
\end{corollary}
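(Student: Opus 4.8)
The plan is to read the statement off Theorem~\ref{thm:smooth_flow-Hinfty} by passing from the smooth geodesic $(\varphi,v)$ on $T\DiffRd$ to its Eulerian velocity $u = v\circ\varphi^{-1}$, and to settle uniqueness and non-extendability by descending to the Hilbert approximations $\DRd{q}$. For the \emph{existence} part: given smooth initial data $u_{0}\in\HRd{\infty}$ (the Lie algebra of $\DiffRd$), Theorem~\ref{thm:smooth_flow-Hinfty} applied to $(\id,u_{0})$ produces a smooth geodesic $(\varphi,v)\in C^{\infty}(J,T\DiffRd)$ on a maximal open interval $J\ni 0$ with $\varphi(0)=\id$ and $v(0)=u_{0}$. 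Since $\DiffRd$ is a regular Fréchet Lie group --- as recalled in Section~\ref{sec:framework} for $\DiffTd$, and by the same token for $\DiffRd$ --- inversion and composition are smooth, so $u(t):=v(t)\circ\varphi(t)^{-1}$ defines a smooth curve in $\HRd{\infty}$. By the derivation of Section~\ref{sec:reduction} culminating in \eqref{eq:spray}, a curve in $T\Diff(M)$ is an integral curve of the geodesic spray \eqref{eq:geodesic-spray} if and only if its Eulerian velocity solves the EPDiff equation \eqref{eq:EPDiff}; hence $u$ is a smooth solution of \eqref{eq:Diff-Euler-Arnold} on $J$ with $u(0)=u_{0}$.

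For \emph{uniqueness and non-extendability}, let $\tilde u\in C^{\infty}(\tilde J,\HRd{\infty})$ be any solution on an open interval $\tilde J\ni 0$ with $\tilde u(0)=u_{0}$, and fix $q>1+d/2$ with $r\le q$. A smooth solution lies in particular in the class $C^{0}(\tilde J,\HRd{q})\cap C^{1}(\tilde J,\HRd{q-1})$, so by the uniqueness part of Corollary~\ref{cor:Euler-well-posedness-Hq} it must agree on $\tilde J\cap J_{q}$ with the non-extendable $\HRd{q}$-solution issued from $u_{0}$; this $\HRd{q}$-solution is just $u$ viewed in $\HRd{q}$, and $J_{q}=J$ by the ``no loss, no gain'' Lemma~\ref{lem:nlng} applied across all admissible $q$. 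Hence $\tilde u = u$ on $\tilde J$, and were $\tilde J$ strictly larger than $J$ the function $\tilde u$ would extend the non-extendable $\HRd{q}$-solution, a contradiction; thus $\tilde J\subseteq J$. This shows $u$ is the unique non-extendable smooth solution. The torus case is identical, with the rotation group of $\Td$ playing the role of the translation group in Lemma~\ref{lem:nlng}.

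Essentially all the hard analysis has already been absorbed into Theorem~\ref{thm:twisted-map-smoothness}, Theorem~\ref{thm:smooth_flow-Hq} and Lemma~\ref{lem:nlng}, so the remaining difficulty is organisational rather than technical: one has to keep careful track of the passage between the Fréchet and Hilbert categories. Concretely, the points to watch are (i) that a smooth EPDiff solution is, for every admissible $q$, a legitimate solution in the class covered by Corollary~\ref{cor:Euler-well-posedness-Hq} --- so that its Hilbert-level uniqueness can be invoked --- and, correspondingly, that the smooth geodesic of Theorem~\ref{thm:smooth_flow-Hinfty} really does restrict to the $\HRd{q}$-geodesic for each $q$; and (ii) that Lemma~\ref{lem:nlng}, used uniformly in $q$, is exactly what upgrades maximality within each fixed Sobolev class to genuine non-extendability in the smooth category, so that no Nash--Moser scheme is needed.
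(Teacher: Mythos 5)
Your proposal is correct and follows exactly the route the paper intends: the paper gives no explicit proof of this corollary, presenting it as an immediate consequence of Theorem~\ref{thm:smooth_flow-Hinfty}, with smoothness of $u=v\circ\varphi^{-1}$ coming from smoothness of composition and inversion in the Fr\'echet/ILH category and uniqueness/non-extendability obtained by descending to the $\HRd{q}$ levels via Corollary~\ref{cor:Euler-well-posedness-Hq} and Lemma~\ref{lem:nlng}. Your write-up simply makes explicit the bookkeeping the paper leaves to the reader.
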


\begin{corollary}
  The geodesic flow of the $H^{s}$ metric on $\DiffRd$, induced by the inertia operator $\Lambda^{2s} = \op{(1 + \abs{\xi}^{2})^{s}}$ is locally well-posed for $s \ge 1/2$. The same result holds for the torus $\Td$.
\end{corollary}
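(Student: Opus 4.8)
The plan is to recognise the $H^{s}$ inertia operator $A = \Lambda^{2s} = \op{\langle \xi \rangle^{2s}}$ as an \emph{elliptic} Fourier multiplier of class $S^{r}$ with $r = 2s$, and then simply invoke the machinery already assembled, namely Theorem~\ref{thm:smooth_flow-Hinfty} and Corollary~\ref{cor:Euler-well-posedness-Hinfty} on $\Rd$, together with their toroidal analogues on $\Td$. There is nothing genuinely new to prove; the task is a verification of hypotheses.

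First I would check the three structural properties. (i) The symbol $a(\xi) = \langle \xi \rangle^{2s}\,\mathrm{Id}_{\CC^{d}}$ is, for every $\xi$, a positive multiple of the identity, hence Hermitian and positive definite; by the remark following the definition of Fourier multipliers this makes $A$ an $L^{2}$-symmetric, positive definite operator, and by Plancherel $\langle u,v\rangle = \int_{\Rd} (Au)\cdot v\, d\mu = \int_{\Rd} \langle \xi \rangle^{2s}\,\hat{u}\cdot\overline{\hat{v}}\, d\xi = \langle u,v\rangle_{H^{s}}$, so $A$ really does induce the metric~\eqref{eq:definition-metric} in the form of the $H^{s}$ inner product. (ii) $A$ is of class $S^{2s}$: an induction on $\abs{\alpha}$ gives $\abs{\partial^{\alpha}a(\xi)} \le C_{\alpha}\langle \xi \rangle^{2s-\abs{\alpha}}$, which is the classical statement (recorded as an example after Definition~\ref{def:class-Sr}) that $\Lambda^{2s}$ is a symbol of order $2s$. (iii) $A$ is elliptic: $a(\xi) = \langle \xi \rangle^{2s}\in\mathrm{GL}(\CC^{d})$ and $\norm{[a(\xi)]^{-1}} = \langle \xi \rangle^{-2s} = (1+\abs{\xi}^{2})^{-r/2}$, so the ellipticity inequality holds with equality; in particular $A$ extends to $\mathrm{Isom}(\HRd{q},\HRd{q-2s})$ for all $q$, with inverse $\Lambda^{-2s}$.

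Next, the hypothesis $r\ge 1$ of Theorem~\ref{thm:smooth_flow-Hinfty} reads $2s\ge 1$, i.e.\ exactly $s\ge 1/2$. Applying that theorem — whose proof rests on the smoothness of the extended twisted map (Theorem~\ref{thm:twisted-map-smoothness}) via the spray reduction (Theorem~\ref{thm:smoothness-spray}), on Cauchy--Lipschitz on $T\DRd{q}$ for each $q>1+d/2$ with $q\ge 2s$ (Theorem~\ref{thm:smooth_flow-Hq}), and on the no-loss/no-gain Lemma~\ref{lem:nlng} — yields, for every $(\varphi_{0},u_{0})\in T\DiffRd$, a unique non-extendable smooth geodesic on an open interval containing $0$, and Corollary~\ref{cor:Euler-well-posedness-Hinfty} gives the corresponding statement for the $H^{s}$ EPDiff equation with smooth data. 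For the torus one repeats the argument verbatim, the only change being that one must check $\Lambda^{2s}$ lies in the \emph{toroidal} class $S^{2s}$ of Definition~\ref{def:toroidal-class-Sr}; this follows from the discrete mean value inequality quoted in the remark after Lemma~\ref{lem:boundedness-lemma}, since on $\ZZ^{d}$ one has $\langle \eta \rangle \sim \langle \xi \rangle$ whenever $\eta$ lies in a bounded neighbourhood of $\xi$, so the finite differences $\triangle^{\alpha}a$ obey the same bounds as the continuous derivatives $\partial^{\alpha}a$. With that in hand the toroidal versions of Theorems~\ref{thm:twisted-map-smoothness} and~\ref{thm:smooth_flow-Hinfty} apply.

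Since every analytic ingredient has already been established, there is no real obstacle: if anything is non-automatic it is the elementary symbol estimates — trivial on $\Rd$, and only marginally less so on $\Td$, where one passes from derivatives to finite differences via the discrete mean value theorem.
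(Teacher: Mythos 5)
Your proposal is correct and is exactly the argument the paper intends: the corollary is an immediate specialization of Theorem~\ref{thm:smooth_flow-Hinfty} and Corollary~\ref{cor:Euler-well-posedness-Hinfty}, once one notes (as the paper's examples after Definitions~\ref{def:class-Sr} and~\ref{def:toroidal-class-Sr} already record) that $\Lambda^{2s}$ is an elliptic Fourier multiplier of (toroidal) class $S^{2s}$, so that the hypothesis $r\ge 1$ becomes $s\ge 1/2$. Your verification of symmetry, positivity, the symbol estimates, and ellipticity is exactly the hypothesis-checking the paper leaves implicit.
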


\begin{remark}
  It is a well-known fact that if $\norm{u_{x}(t)}_{\infty}$ is bounded on every bounded subinterval of $J$ then the same holds for all the $H^{q}$ norm of $v(t)$ (see~\cite{EK2014a} for a detailed proof, for instance) and hence that $J = \RR$. This observation allows to conclude that the geodesic flow of the $H^{s}$ metric is \emph{globally well-posed} for $s > 1 + d/2$ due to the Sobolev inequality $\norm{u_{x}}_{\infty} \lesssim \norm{u}_{H^{s}}$ and the fact that $\norm{u(t)}_{H^{s}}$ is constant along the flow.
\end{remark}


\end{document}